\documentclass{amsart}
\usepackage{}
\usepackage{amsfonts}
\usepackage{mathrsfs}
\usepackage{amsmath}
\usepackage{paralist}
\usepackage{amssymb}
\usepackage{amsthm}
\usepackage{amscd}

\usepackage[colorlinks=true]{hyperref}
\hypersetup{urlcolor=blue, citecolor=red}

\textheight=8.2 true in
\textwidth=5.0 true in
\topmargin 30pt
\setcounter{page}{1}

\numberwithin{equation}{section}
\newtheorem{Thm}{Theorem}[section]
\newtheorem{Lem}{Lemma}[section]
\newtheorem{Prop}{Proposition}[section]

\newtheorem{Def}{Definition}[section]
\newtheorem{Cor}{Corollary}[section]

\newtheorem*{exa}{Example}
\newtheorem{Rem}{Remark}[section]




\begin{document}
\title{Degenerate Mean Field Games with H\"{o}rmander diffusion}
\thanks{Acknowledgements: This work is supported by the NSFC under the grands 12271269 and supported by the Fundamental Research Funds for the Central Universities.}
\author{Yiming Jiang}
\address{School of Mathematical Sciences and LPMC\\ Nankai University\\ Tianjin 300071 China}
\email{ymjiangnk@nankai.edu.cn}
\author{Jingchuang Ren}
\address{School of Mathematical Sciences \\ Nankai University\\ Tianjin 300071 China}
\email{1120200024@mail.nankai.edu.cn}
\author{Yawei Wei}
\address{School of Mathematical Sciences and LPMC\\ Nankai University\\ Tianjin 300071 China}
\email{weiyawei@nankai.edu.cn}
\author{Jie Xue}
\address{School of Mathematical Sciences \\ Nankai University\\ Tianjin 300071 China}
\email{1120200032@mail.nankai.edu.cn}
\keywords{Mean filed games; H\"{o}rmander condition; Global Schauder estimates; Weak maximum principle}
	
\subjclass[2020]{35Q89; 35K65; 35A01}
\begin{abstract}

In this paper, we study a class of degenerate mean field game systems arising from the mean field games with H\"{o}rmander diffusion, where the generic player may have a ``forbidden'' direction at some point. Here we prove the existence and uniqueness of the classical solutions in weighted H\"{o}lder spaces for the PDE systems, which describe the Nash equilibria in the games. The degeneracy causes the lack of commutation of vector fields and the fundamental solution which are the main difficulties in the proof of the global Schauder estimate and the weak maximum principle. Based on the idea of the localizing technique and the local homogeneity of degenerate operators, we extend the maximum regularity result and obtain the global Schauder estimates. For the weak maximum principle, we construct a subsolution instead of the fundamental solution of the degenerate operators.

\end{abstract}
\maketitle

\section{INTRODUCTION}	
\subsection{Statement of the problem and motivations}
In this paper, we study the degenerate mean field game (briefly, MFG) systems as follows
\begin{equation}\label{eq1.1}
\begin{cases}
 -\partial_t u-\Delta_\mathcal {X} u+H(x,\nabla_\mathcal {X}u)=F(x,m),\ &(t,x)\in (0,T)\times\mathbb{R}^2,\ \mathrm{(HJE)}\\
    \partial_t m-\Delta_\mathcal {X} m-\mathrm{div}_{\mathcal {X}}(mD_{p}H(x,\nabla_\mathcal {X} u))=0,\ &(t,x)\in (0,T)\times\mathbb{R}^2, \ \mathrm{(FPE)}\\
    u(T,x)=G(x,m_T),\  m(0,x)=m_0(x),\ &x\in\mathbb{R}^2,
\end{cases}
\end{equation}
where $\Delta_\mathcal {X}$ is a hypoelliptic operator associated to a family of H\"{o}rmander vector fields $\mathcal {X}:=\{X_1,X_2\}$, see in subsection 2.1 for definition, which is given by
\begin{equation}\label{eq1.60}
\Delta_\mathcal {X}:=\sum^{2}_{i=1}X_i^2,\ \text{with\ } X_1:=\partial_{x_1},\  X_2:=h(x_1)\partial_{x_2},
\end{equation}
and $\nabla_\mathcal {X}:=(X_1,X_2)$, $\mathrm{div}_\mathcal {X}:=X_1+X_2$
 are the corresponding gradient and divergence operators. Here $h$ is a smooth function possibly vanishing. The functions $F$ and $G$ are regular satisfying assumptions {\bf{(H3)}} and {\bf{(H4)}} below. The Hamiltonian is given by
\begin{equation}\label{eq1.22}
H(x,\nabla_\mathcal {X}u):=\frac{1}{2}\left|\nabla_\mathcal {X}u\right|^2.
\end{equation}

Note that the H\"{o}rmander vector field $\mathcal {X}$ given in \eqref{eq1.60} has Grushin structure. The generalized Grushin differential operators is given by
$$\Delta_{x}+\lambda^2(x)\Delta_{y},\ x\in\mathbb{R}^n,\ y\in\mathbb{R}^m,\ n,m\geq1,$$
where $\lambda(x)\in C(\mathbb{R}^n)$. More details of Grushin type operators, see in \cite{Grush}. 

The motivation of the MFG systems \eqref{eq1.1} is to describe Nash equilibria for the MFG in the following. The $u$ in ${\rm{(HJE)}}$ is the value  function of an optimal control problem of a generic player, where the dynamics is given by the controlled stochastic differential equation (briefly, SDE) with H\"{o}rmander diffusion
\begin{align}\label{eq1.7}
\left\{
 \begin{array}{ll}
  dX_{s}^1= \alpha_1(s,X_s)ds+\sqrt{2}dB^1_s,\qquad\qquad\qquad\ \\
  dX_{s}^2= \alpha_2(s,X_s)h(X^1_s)ds+\sqrt{2}h(X^1_s)dB^2_s,\\
   X_t^1=x_1\in\mathbb{R},\ X_t^2=x_2\in\mathbb{R}.
\end{array}
\right.
\end{align}
For any $s\in[t,T]$, set $X_s:=(X_s^1,X_s^2)$, $b(X_s,\alpha_s):=(\alpha_1(s,X_s), \alpha_2(s,X_s)h(X^1_s))$, $\sigma(X_s):=diag\big\{\sqrt{2}, \sqrt{2}h(X^1_s)\big\}$, $\alpha_s:=(\alpha_1(s,X_s),\alpha_2(s,X_s))$, and $B_s:=(B^1_s,B^2_s)$.

Then if the evolution of the whole population's distribution $m$ is given, each player wants to choose the optimal control $\alpha_s$ to minimize the cost function
\begin{equation}\label{4.21}
u(t,x)=\inf_{\alpha\in\mathscr{A}(t,x)}\mathbf{E}\left[\int^T_t\frac{1}{2}|\alpha_s|^2+F(X_s,m_s)ds+G(X_T,m_T)\right]
\end{equation}
where $\mathscr{A}(t,x)$ is the set of controls $\alpha$ such that $\mathbf{E}\left[\int^T_tF(X_s,m_s)ds\right]<\infty,$ and
 $$\mathbf{E}\bigg[\int^T_t|\alpha_s|^2ds\bigg]<\infty,\ \text{for any}\  (t,x)\in[0,T]\times\mathbb{R}^2.$$
In the SDE \eqref{eq1.7}, the drift coefficient $b(X_s,\alpha_s)$ and the diffusion coefficient $\sigma(X_s)$ are Lipschitz in $X_s$ uniformly in $\mathscr{A}(t,x)$. Assume that $B_s^1$ and $B_s^2$ are independent one dimension standard Brownian motions on a filtered probability space $(\Omega,\mathscr{F},\{\mathscr{F}_t\}_{t\geq0},\mathbf{P})$ satisfying the usual conditions in the stochastic analysis, see Chapter 3 in \cite{Hu}.
 Note that controls $\alpha_s$ are adapted to the filtration generated by $B_s$, valued in $\mathscr{A}(t,x)$, and the optimal feedback of each agent is given by
\begin{equation}\label{11.20}
\alpha^*(t,x)=-D_{p}H(x,\nabla_\mathcal {X} u)=-\nabla_\mathcal {X} u(t,x).
\end{equation}

Now we first concisely give a derivation of the Hamiltonian-Jacobi equation (briefly, HJE) in the MFG systems \eqref{eq1.1}. The main idea depends on dynamic programming principle and It\^{o}'s formula, which refers to Chapter 2 in \cite{LR}. For any stopping time $\tau\in[t,T]$, using It\^{o}'s formula to $u$ on $[t,\tau]$, we have
\begin{align}\label{2.2}
&\quad u(\tau, X_\tau)-u(t, X_t)\\\nonumber
  &= \int^\tau_t \partial_s u(s, X_s)ds+\int^\tau_t\nabla u(s, X_s)dX_s+\frac{1}{2}\int^\tau_t\sum^{2}_{i=1}\partial^2_{x_i}u(s, X_s)d\big<X^i\big>_s\\\nonumber
   &= \int^\tau_t\partial_s u(s, X_s)+\mathcal {L}^{\alpha}_s u(s, X_s)ds+\sqrt{2}\int^\tau_t  \nabla_\mathcal {X} udB_s,
\end{align}
where $\mathcal {L}^{\alpha}_s$ is the second order differential operator which is given by
\begin{equation}\label{1.007}
  \big[\mathcal {L}^{\alpha}_s u(s,\cdot)\big](x):=\alpha(s,x) \nabla_\mathcal {X} u(s,x)+\Delta_\mathcal {X} u(s,x).
\end{equation}
By the dynamic programming principle, we have
\begin{equation}\label{1.08}
 u(t,x)=\inf_{\alpha\in\mathscr{A}(t,x)}\mathbf{E}\left[\int^\tau_t \frac{1}{2}|\alpha_s|^2+F(X_s,m_s)ds+u(\tau,X_\tau)\right].
\end{equation}
Because the martingale property, we have $\mathbf{E}\left[\int^\tau_t  \nabla_\mathcal {X} udB_s\right]=0$. Plugging \eqref{2.2} into \eqref{1.08}, we have
\begin{align*}
   \inf_{\alpha\in\mathscr{A}(t,x)}\mathbf{E}\left[\int^{\tau}_{t}\frac{1}{2}|\alpha_s|^2+F(X_s,m_s)ds+\int^{\tau}_{t}\partial_su(s, X_s)+\mathcal {L}^{\alpha}_s u(s, X_s)ds\right] &=  0.
\end{align*}
Let $\tau=t+\delta$, divide by $\delta$ and let $\delta\rightarrow0$, we obtain the HJE
$$\partial_tu(t,x)+\Delta_\mathcal {X} u(t,x)+F(x,m)+\inf_{\alpha\in\mathscr{A}}\big\{H^*(x,\alpha)+\alpha \nabla_\mathcal {X} u(t,x)\big\}=0, $$
where $H^*(x,\alpha):=\frac{1}{2}|\alpha|^2$ is the Legendre transform of $H(x,\nabla_\mathcal {X}u)$ with respect to the second variable.
This is also can be written as
$$-\partial_tu(t,x)-\Delta_\mathcal {X} u(t,x)+H(x, \nabla_\mathcal {X} u)=F(x,m).$$

Next we give the derivation of the Fokker-Planck equation (briefly, FPE) in the MFG systems \eqref{eq1.1}, which refers to Chapter 1 in \cite{Carmona16}.
If $\varphi:\mathbb{R}^2\rightarrow\mathbb{R}$ is a $C_{\mathcal {X}}^2$ function with bounded derivatives, using It\^{o}'s formula on $[0,t]$, we get
\begin{align}\label{2.3}
  \varphi(X_t) 
   &= \varphi(X_0)+\int^{t}_{0}\mathcal {L}^{\alpha}_{s}\varphi (X_s)ds+\sqrt{2}\int^{t}_{0} \nabla_\mathcal {X} \varphi(X_s) dB_s.
\end{align}
We denote the distribution of $X_t$ by $\mu_t(dx)=P(X_t\in dx)$, and use the notation
$\left<\varphi,\mu\right>:=\int_{\mathbb{R}^2}\varphi(x)\mu(dx).$
Taking expectations on both sides of \eqref{2.3}, we have
$$\left <\varphi,\mu_t\right>
                  =\big <\varphi,\mu_0+\int^{t}_{0}(\mathcal {L}^{\alpha}_{s})^*\mu_s ds\big>,$$
where $(\mathcal {L}^{\alpha}_{s})^*$ is the adjoint of the operator $\mathcal {L}^{\alpha}_{s}$. 
Assuming that $\mu_t$  has a density, say  $m(t,x)dx=\mu_t(dx)$, since this equality is true for all test functions $\varphi$, then the density $m$ is the solution of
\begin{equation}\label{eq2.11}
\partial_t m(t,x)=(\mathcal {L}^{\alpha}_{t})^* m(t,x),
\end{equation}
with initial condition $m_0$. It follows from \eqref{1.007} and the integration by parts that
\begin{align}\label{2.5}
 \left<\varphi,(\mathcal {L}^{\alpha}_{t})^* m\right> &=\left<\mathcal {L}^{\alpha}_{t}\varphi, m\right> \\\nonumber\nonumber
  &= \int_{\mathbb{R}^2}\big(\alpha_1 \partial_{x_1}\varphi+\alpha_2 h(x_1)\partial_{x_2}\varphi\big)m+\big(\partial^2_{x_1}\varphi+h^2(x_1)\partial^2_{x_2}\varphi\big)m dx\\\nonumber
 &= \int_{\mathbb{R}^2}\big(\partial^2_{x_1}m+h^2(x_1)\partial^2_{x_2}m\big)\varphi-\big(\partial_{x_1}(m\alpha_1) + h(x_1)\partial_{x_2}(m\alpha_2)\big)\varphi dx\\\nonumber
 &=\int_{\mathbb{R}^2}\varphi\big(-\mathrm{div}_\mathcal {X}(m\alpha)+\Delta_\mathcal {X} m\big)dx.
\end{align}
Because of the arbitrariness of the function $\varphi$, we get
\begin{equation}\label{1.008}
  (\mathcal {L}^{\alpha}_{t})^* m=\Delta_\mathcal {X} m-\mathrm{div}_\mathcal {X}(m\alpha).
\end{equation}
It follows from \eqref{eq2.11}, \eqref{1.008} and \eqref{11.20} that
$$\partial_t m-\Delta_\mathcal {X} m-\mathrm{div}_\mathcal {X}(m\nabla_{\mathcal {X}}u)=0.$$

It is worth noting that the derivation relies on the symmetry of Grushin type operators, that is $X_i^*=-X_i$, which ensures that the integration by parts in \eqref{2.5} holds.
\subsection{Research history and main results}
MFG theory is devoted to the analysis of differential games with infinitely many players. This theory has been introduced by Lasry and Lions in 2006 \cite{1} \cite{2}. At about the same time, Huang et al. \cite{3} solved the large population games independently. Then MFG has been studied extensively in many different fields. Bensoussan et al. \cite{Be} studied the MFG and mean field type control theory. Carmona and Delarue \cite{6} focused on the theory and applications of MFG by probabilistic approach. Gangbo \cite{5} developed optimal transport theory within the MFG framework.  Gomes et al. \cite{Gomes} discussed regularity theory for MFG systems either stationary or time-dependent, local or nonlocal. Cardaliaguet et al. \cite{Cardaliaguet15} obtained  the existence of classical solutions for the master equation of MFG.

Degenerate MFG systems have much fewer references than the classical one. The hypoelliptic operator, which satisfies the H\"{o}rmander condition, is an intermediate case between uniformly elliptic and general degenerate elliptic. The hypoelliptic MFG was addressed by Dragoni and Feleqi \cite{16}. They studied the ergodic systems with H\"{o}rmander diffusions  and obtained a unique classical solution in the weighted spaces. Feleqi et al. \cite{TT} obtained regularities for hypoelliptic MFG based on the theory of eigenvalue problems. Furthermore, Camilli \cite{F} studied the quadratic MFG with Langevin diffusion and obtained weak solutions to the coupled kinetic systems with uncoupled terminal data. 
For the general degenerate MFG, Cardaliaguet et al. \cite{C} obtained the existence and uniqueness of suitably defined weak solutions for parabolic MFG systems with degenerate diffusion and local coupling
. Then Ferreira et al. \cite{FT} extended the results to a wide class of time-dependent degenerate MFG. Moreover, Cardaliaguet et al. \cite{Com} developed a new notion of weak solution for the MFG with common noise and degenerate idiosyncratic noise. 
For the MFG systems \eqref{eq1.1}, the degenerate operator is hypoelliptic with Grushin structure.

Hypoelliptic operators enjoy more regularizing properties than general degenerate elliptic operators. H\"{o}rmander \cite{Hormander} introduced the H\"{o}rmander condition to ensure the regularity of the degenerate sum of square operators. For the  quasi-linear  subelliptic problems of H\"{o}rmander type,  Xu and Zuily \cite{Xu-56} have obtained the existence and higher interior regularity by the Rothschild-Stein lifting theorem. Further, the corresponding regularity results of the parabolic degenerate equation are obtained by Yan \cite{Yan2}. For  the heat kernel representation of the solution, Bonfiglioli et al. \cite{heat} considered non-divergence form operators on stratified groups, and Bramanti et al. \cite{Bramanti10} considered operators which are degenerate inside a compact set.
In this paper, we obtain the H\"{o}lder regularity for the (HJE) in \eqref{eq1.1}.

The references mentioned above motivate us to discover the existence and uniqueness of the coupling classical solution in the weighted H\"{o}lder spaces for the degenerate MFG systems \eqref{eq1.1} with the Grushin structure.

Now we list the notions and assumptions as follows.

Let $\mathcal {P}_{1}$ be the set of Borel probability measures $m$ on $\mathbb{R}^2$, such that $\int_{\mathbb{R}^2}|x|dm<\infty$, and endowed with the Kantorovitch-Rubinstein distance
\begin{equation}\label{distance}
 d_1(\mu,\nu):=\inf_{\gamma\in\Pi(\mu,\nu)}\int_{\mathbb{R}^2}|x-y|d\gamma(x,y),
\end{equation}
where $\Pi(\mu,\nu)$ is the set of Borel probability measures on $\mathbb{R}^2$ such that $\gamma(E\times\mathbb{R}^2)=\mu(E)$,
 and $\gamma(\mathbb{R}^2\times E)=\nu(E)$ for any Borel set $E\subset\mathbb{R}^2$. For more details of this distance can refer to  \cite{Cardaliaguet}.

 Let $\mathcal {C}$ be the set of maps $\mu\in C([0,T];\mathcal {P}_{1})$, such that $\sup_{t\in[0,T]}\int_{\mathbb{R}^{2}}|x|^2dm\leq C$,
\begin{equation}\label{eq2.7}
\sup_{s\neq t}\frac{d_1(\mu_s,\mu_t)}{|t-s|^{\frac{1}{2}}}\leq C.
\end{equation}
Then $\mathcal {C}$ is a convex closed subset of $C([0,T];\mathcal {P}_{1})$. In fact, it is compact in $\mathcal {P}_{1}$, more properties of the space of probability measures refer to Chapter 5 in \cite{Cardaliaguet}.

Throughout this paper, $C$ is a generic positive constant which may differ from line by line, and the following hypotheses are required.

\noindent{\bf(H1)} The smooth function $h(x_1)$ satisfies $\sup_{x_1\in\mathbb{R}}|h(x_1)|<\infty$.

\noindent{\bf(H2)} Set $\mathcal {N}(h):=\{y\in\mathbb{R}|h(y)=0\}$. Then $\mathcal {N}(h)\neq\emptyset$, and for any $y\in\mathcal {N}(h)$, $y$
 is isolated. Moreover, $\exists\  \kappa\in\mathbb{N}$, s.t. $y\not\in\mathcal {N}(h^{(\kappa)}) $, i.e., $h^{(\kappa)}(y)\neq0$, where $h^{(\kappa)}$ is the $\kappa$-order derivative of $h$.

\noindent{\bf(H3)} The functions $F(x,m)$ and $G(x,m_T)$ are Lipschitz continuous in $\mathbb{R}^2\times\mathcal {P}_{1}$, that is for any $(x,\mu),(y,\nu)\in\mathbb{R}^2\times\mathcal {P}_{1}$,

{\rm{(A1)}} $|F(x,\mu)-F(y,\nu)|\leq C \big(d_{C}(x,y)+d_1(\mu,\nu)\big)$,

{\rm{(A2)}} $|G(x,\mu_T)-G(y,\nu_T)|\leq C\big(d_{C}(x,y)+d_1(\mu_T,\nu_T)\big)$,

where $d_C$ is given in Definition \ref{2.6} below.

\noindent{\bf(H4)} The functions $F(x,m)$ and $G(x,m_T)$ are uniform regular in weighted H\"{o}lder space, i.e. for some $0<\alpha<1$, $F(x,m)$ and $G(x,m_T)$ are bounded in $C^{0,\alpha}_{\mathcal {X}}(\mathbb{R}^2)$, and $C^{2,\alpha}_{\mathcal {X}}(\mathbb{R}^2)$ uniformly with respect to the measure $m\in\mathcal {P}_{1}$.

\noindent{\bf(H5)} The functions $F(x,m)$ and $G(x,m_T)$ are monotonically increasing with respect to the measure $m$, that is for any $\mu,\nu\in\mathcal {P}_{1}$,

{\rm{(A3)}} $\int_{\mathbb{R}^2}F(x,\mu)-F(x,\nu)d(\mu-\nu)\geq 0$,

{\rm{(A4)}} $\int_{\mathbb{R}^2}G(x,\mu_T)-G(x,\nu_T)d(\mu_T-\nu_T)\geq 0$.

\noindent{\bf(H6)} The probability measure $m_0$ is absolutely continuous with respect to the Lebesgue measure satisfying $\int_{\mathbb{R}^2}|x|^2 dm_0<\infty$, which has a $ C^{2,\alpha}$ continuous density, still denoted $m_0$.

\begin{exa}
Easy examples of $h(x_1)$ satisfying assumptions {\bf(H1)} and {\bf(H2)}, are $$h(x_1)=\frac{x_1}{\sqrt{x^2_1+1}},$$
 which has been studied as a degenerate diffusion term in \cite{27}, and
$$h(x_1)=\sin x_1.$$
More general example is given by
$$h(x_1)=\frac{x_1^a}{(x_1^b+C)^{\frac{a}{b}}},$$
where $C>0$, and $a,\ b$ are positive integers.
The function $h$ in these examples indicates the degenerate operators in MFG systems \eqref{eq1.1}, holding the H\"{o}rmander condition, and bounded in $\mathbb{R}$.
\end{exa}
Let $\mathcal {X}=\{X_1,\cdots,X_q\}$, $1\leq q\leq n$, be a family of H\"{o}rmander vector fields on $\mathbb{R}^n$ with Grushin structure, we define the parabolic non-divergence operators
\begin{equation}\label{001}
  \mathcal {H}:=\partial_t-\sum^{q}_{i,j=1}a_{ij}(t,x)X_iX_j+\sum^{q}_{i=1}b_{i}(t,x)X_i+c(t,x),
\end{equation}
for any $(t,x)\in(0,T)\times\mathbb{R}^n$. Let $a_{ij},b_i,c\in C^{0,\alpha}_{\mathcal {X}}([0,T]\times\mathbb{R}^n)$, for some $\alpha\in(0,1)$, and $A :=\big\{a_{ij}(t,x)\big\}^{q}_{i,j=1}$ be a symmetric, uniformly positive definite matrix on $[0,T]\times\mathbb{R}^n$, that is
\begin{equation}\label{1.14}
  C_{\xi}^{-1}|\xi|^2\leq\sum^{q}_{i,j=1}a_{ij}\xi_i\xi_j\leq C_{\xi}|\xi|^2,\ \text{for\ some\ }C_{\xi}>0.
\end{equation}

Here below we state the main results of this paper.

 Consider the quasi-linear HJE in MFG systems \eqref{eq1.1} with the fixed density $\bar{m}\in C([0,T];\mathcal {P}_{1})$. Set
  \begin{equation}\label{U}
   U_T:=(0,T)\times\mathbb{R}^2.
\end{equation}
We use the Hopf transform, setting $w=e^{-\frac{u}{2}}$. It is clearly that $u$ is a solution of the following quasi-linear equation
 \begin{equation}\label{eq1.3}
\left\{
 \begin{array}{ll}
  \partial_t u-\Delta_{\mathcal {X}} u+\frac{1}{2}| \nabla_{\mathcal {X}} u|^2=F(x,\bar{m}),\quad\quad\ \ (t,x)\in U_{T}, \\
     u(0,x)=G(x,\bar{m}_T),\qquad\qquad\qquad\qquad\ \ \ x\in\mathbb{R}^2,
 \end{array}
\right.
\end{equation}
 if and only if $w$ is a solution of the linear equation
\begin{equation}\label{2.0}
\left\{
  \begin{array}{ll}
    \partial_t w-\Delta_{\mathcal {X}}w+\frac{1}{2}F w=0,\quad\quad\ \ (t,x)\in U_T,  \\
    w(0,x)=e^{-\frac{G(x,\bar{m}_T)}{2}},\qquad\quad\quad\ \ x\in\mathbb{R}^2.
  \end{array}
\right.
\end{equation}

First, we prove the weak maximum principle (briefly, WMP) on $U_T$ for the solution of the linear Cauchy problem with weak regularity.
\begin{Lem}\label{WMP4}{\bf{(WMP for linear Cauchy problems)}}
Given vector fields $\mathcal {X}$ defined as \eqref{eq1.60} satisfy assumptions {\bf{(H1)}} and {\bf{(H2)}}. If $c(t,x)$ is bounded, and $u\in C_{\mathcal {X}}^{2}(U_{T})\cap C(\bar{U}_{T})$ satisfies
\begin{equation}\label{12}
\left\{
  \begin{array}{ll}
    \partial_t u(t,x)-\Delta_{\mathcal {X}}u(t,x)+c(t,x)u(t,x)\leq0,\quad\ (t,x)\in U_{T},\\
     u(0,x)=g(x),\qquad\qquad\qquad\qquad\qquad\qquad\ \ x\in\mathbb{R}^2,
  \end{array}
\right.
\end{equation}
with the growth estimate
\begin{equation}\label{2.11}
  u(t,x)\leq Ce^{a(d_{C}(x))^2},
\end{equation}
for any $(t,x)\in\bar{U}_{T}$, and constants $C ,a>0$, then
$$\sup_{\bar{U}_{T}}u\leq\sup_{\mathbb{R}^2}g^{+}.$$
\end{Lem}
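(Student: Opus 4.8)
The plan is to adapt the classical Phragmén--Lindelöf / Gilbarg--Trudinger strategy for the weak maximum principle on unbounded domains to the degenerate sub-Riemannian setting. First I would handle the sign of $c$: write $u = e^{\lambda t} v$ for a large constant $\lambda > 0$ so that $v$ satisfies $\partial_t v - \Delta_{\mathcal{X}} v + (c(t,x)+\lambda) v \le 0$ with $c + \lambda \ge 0$ everywhere (using boundedness of $c$); since $e^{\lambda t}>0$ this does not change the location of positive maxima, and it suffices to prove $\sup v \le \sup g^+$. So from now on assume $c \ge 0$.

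The heart of the argument is a comparison function that controls the Gaussian-type growth \eqref{2.11}. In the uniformly elliptic case one uses $\psi(t,x) = e^{\mu(|x|^2 + Kt)}$; here I would instead use the control distance, trying an auxiliary function of the form
\begin{equation*}
\varphi_\varepsilon(t,x) = \varepsilon \, e^{\beta (d_C(x))^2 /(1 - \gamma t)}
\end{equation*}
on a strip $0 \le t < 1/\gamma$ (shrink $T$ into finitely many such strips and iterate). The key computation is that, because of the local homogeneity of $\Delta_{\mathcal{X}}$ and the fact that $d_C$ is a (Lipschitz, away from the cut locus) gauge adapted to the vector fields, one has the subsolution-type inequality $\partial_t \varphi_\varepsilon - \Delta_{\mathcal{X}} \varphi_\varepsilon \ge 0$ for $\beta$ small and $\gamma$ chosen appropriately — this is exactly the place the paper's phrasing ``we construct a subsolution instead of the fundamental solution of the degenerate operators'' is pointing to. Granting this, $w_\varepsilon := u - \varphi_\varepsilon$ satisfies $\partial_t w_\varepsilon - \Delta_{\mathcal{X}} w_\varepsilon + c\, w_\varepsilon \le 0$, has $w_\varepsilon(0,\cdot) \le g$, and tends to $-\infty$ as $d_C(x) \to \infty$ uniformly on the strip by comparing the growth rate $\beta/(1-\gamma t)$ against the fixed rate $a$ from \eqref{2.11} (choose the strip short enough that $\beta/(1-\gamma t) > a$). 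Hence $w_\varepsilon$ attains its supremum on a compact set, and on that compact set one applies the classical weak maximum principle for $C^2_{\mathcal{X}}$ subsolutions with $c \ge 0$ (at an interior positive max, $\partial_t w_\varepsilon \le 0$, $\Delta_{\mathcal{X}} w_\varepsilon \le 0$, $c w_\varepsilon \ge 0$, contradicting strict inequality after the usual $-\delta t$ perturbation) to conclude $\sup w_\varepsilon \le \sup_{\mathbb{R}^2} g^+$. Letting $\varepsilon \to 0$ pointwise gives $\sup u \le \sup g^+$ on the strip, and chaining the strips gives it on all of $\bar U_T$.

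The main obstacle I expect is establishing the subsolution inequality $\partial_t \varphi_\varepsilon - \Delta_{\mathcal{X}} \varphi_\varepsilon \ge 0$ rigorously, because $d_C$ is only Lipschitz and not smooth (it fails to be $C^2$ on the cut locus and at points of $\mathcal{N}(h)$ where the Grushin metric degenerates), so $\Delta_{\mathcal{X}} \varphi_\varepsilon$ must be interpreted carefully — either in the viscosity/barrier sense, or by replacing $d_C$ with a smooth majorant $\rho(x)$ with $|\nabla_{\mathcal{X}} \rho| \le C$ and $|\Delta_{\mathcal{X}} \rho| \le C$ that still dominates $(d_C)^2$ up to constants. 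Controlling $X_1 X_1 \varphi_\varepsilon$ and $X_2 X_2 \varphi_\varepsilon = h(x_1)^2 \partial_{x_2}^2 \varphi_\varepsilon + h(x_1) h'(x_1) \partial_{x_2}\varphi_\varepsilon$ and absorbing the lower-order terms coming from $h' $ into the $\beta$-quadratic gradient term is where assumptions {\bf(H1)} (boundedness of $h$) and {\bf(H2)} (finite-order vanishing of $h$, hence the Hörmander/Grushin structure and the equivalence of $d_C$ with an explicit polynomial gauge) will be used. Once that inequality is in hand, the rest is the standard compact-exhaustion argument.
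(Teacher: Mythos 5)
Your overall strategy coincides with the paper's (which follows Evans's classical argument): reduce to $c\ge 0$ by an exponential factor in $t$, subtract an explicit positive supersolution growing faster than $e^{a\,d_C(x)^2}$ so that the difference attains its supremum on a compact set, apply the bounded-domain WMP there, send the barrier's amplitude to $0$, and iterate over short time intervals. The problem is that the one step carrying all of the degenerate-specific content --- exhibiting a genuine $C^2_{\mathcal X}$ barrier $\varphi$ with $\partial_t\varphi-\Delta_{\mathcal X}\varphi\ge 0$ that still dominates $e^{a\,d_C(x)^2}$ at infinity --- is precisely the step you leave open. As literally written, $\varphi_\varepsilon=\varepsilon e^{\beta d_C(x)^2/(1-\gamma t)}$ does not work: $d_C$ fails to be $C^2$ (it is not even $C^1$ across $\{h(x_1)=0\}$ or on the cut locus), so $\Delta_{\mathcal X}\varphi_\varepsilon$ has no pointwise meaning, and the maximum-point argument you then invoke requires a classical $C^2_{\mathcal X}$ subsolution. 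Flagging this as ``the main obstacle'' and offering two untried repairs is not a proof of the lemma; the lemma \emph{is} essentially this construction.

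Your second suggested repair (a smooth gauge in place of $d_C$) is the right one, and it is what the paper actually does: it uses the smooth weighted Euclidean gauge $|x-y|^2_{\mathcal X}=(x_1-y_1)^2+(x_2-y_2)^2/\|h\|^2_{L^\infty(\mathbb{R})}$ with the backward-heat-kernel profile $\mu(T+\varepsilon-t)^{-Q/2}\exp\bigl(|x-y|^2_{\mathcal X}/4(T+\varepsilon-t)\bigr)$, for which the required differential inequality reduces by direct computation to $Q-1-h^2/\|h\|^2_{L^\infty}\ge 0$ and $1-h^2/\|h\|^2_{L^\infty}\ge 0$, both immediate from {\bf(H1)} and $Q=\kappa+2$. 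If you pursue your version, note that any smooth $\rho$ with $|\nabla_{\mathcal X}\rho|\le 1$ automatically satisfies $\rho\le\rho(0)+d_C(x)$, so the substantive point is the reverse comparison $\rho\ge\theta\,d_C(x)-C_0$; equivalently the global bound $d_C(x,0)\le C(1+|x|)$, which holds here (travel in $x_1$ to a fixed point where $|h|$ is bounded below, move in $x_2$, return) but must be proved, not assumed. Two smaller slips: at a maximum attained at an interior or terminal time one has $\partial_t w_\varepsilon\ge 0$, not $\le 0$ --- that is the sign that contradicts strict negativity of the perturbed operator; and $\beta$ must be taken \emph{large} relative to $a$ and the gauge-comparison constant (the supersolution inequality tolerates any $\beta$ once $\gamma$ is chosen large, at the price of shorter time strips), not small.
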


Next,  by the localizing technique, we obtain the global Schauder estimates for the general linear operator $\mathcal {H}$ defined in \eqref{001}.
\begin{Lem}\label{1.1}
{\bf{(Schauder estimate)}} Given vector fields $\mathcal {X}$ defined as \eqref{eq1.60} satisfy assumptions {\bf{(H1)}} and {\bf{(H2)}}. If $u \in C^{2,\alpha}_{\mathcal {X}}(\bar{U}_{T})$ is a solution of the linear problem
\begin{equation}\label{1.8}
\left\{
  \begin{array}{ll}
    \mathcal {H} u=f, \qquad\qquad\qquad\quad (t,x)\in U_{T}, \\
     u(0,x)=u_0(x),\qquad\quad\ \ \ x\in\mathbb{R}^2, \\
  \end{array}
\right.
\end{equation}
where $\mathcal {H}$ defined in \eqref{001}, $f\in C^{0,\alpha}_{\mathcal {X}}(\bar{U}_{T})$, and $u_0\in C^{2,\alpha}_{\mathcal {X}}(\mathbb{R}^2)$ for some $\alpha\in(0,1)$, then there exists a constant $C>0$ depending on $U,\mathcal {X},\alpha, C_\xi$ such that
\begin{equation}\label{3.35}
\|u\|_{C^{2,\alpha}_{\mathcal {X}}(U_T)}\leq C\left(\|f\|_{C^{0,\alpha}_{\mathcal {X}}(U_T)}+\|\mathcal {H}u_0\|_{C^{0,\alpha}_{\mathcal {X}}(U_T)}+\|u\|_{L^{\infty}(U_T)}\right).
\end{equation}
\end{Lem}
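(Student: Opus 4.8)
The plan is to derive the global estimate \eqref{3.35} from the classical \emph{interior} Schauder theory for hypoelliptic operators of H\"{o}rmander type by a localization argument that exploits the homogeneity of the intrinsic metric $d_C$ attached to the Grushin vector fields $\mathcal{X}$. First I would record the local a priori estimate that is available for $\mathcal{H}$: around any point $(t_0,x_0)\in\bar U_T$ there is an intrinsic parabolic cylinder $Q_r(t_0,x_0)$ (defined via $d_C$ and the parabolic dilations compatible with $X_1=\partial_{x_1}$, $X_2=h(x_1)\partial_{x_2}$) on which, for $u\in C^{2,\alpha}_{\mathcal{X}}$ solving $\mathcal{H}u=f$, one has
\begin{equation}\label{eq:interior}
\|u\|_{C^{2,\alpha}_{\mathcal{X}}(Q_{r/2})}\le C\big(\|f\|_{C^{0,\alpha}_{\mathcal{X}}(Q_r)}+\|u\|_{L^\infty(Q_r)}\big),
\end{equation}
with $C$ depending only on $\mathcal{X},\alpha,C_\xi$ and the $C^{0,\alpha}_{\mathcal{X}}$-norms of the coefficients, but \emph{not} on $(t_0,x_0)$. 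This uniformity is the crucial point and is where assumptions {\bf(H1)}, {\bf(H2)} enter: $h$ is bounded with only isolated, finite-order zeros, so the local homogeneous structure (the ``lifting'' of $\mathcal{X}$ and the Rothschild--Stein-type parametrix used by Xu--Zuily \cite{Xu-56} and the parabolic version of Yan \cite{Yan2}, cf. Bramanti et al.\ \cite{Bramanti10}) has constants that can be taken uniform across $\mathbb{R}^2$. For the parabolic (initial-layer) cylinders touching $\{t=0\}$ I would use the boundary version of the interior estimate, which brings in $u_0$: on a half-cylinder $Q^+_r$ meeting $\{t=0\}$,
\begin{equation}\label{eq:bdy}
\|u\|_{C^{2,\alpha}_{\mathcal{X}}(Q^+_{r/2})}\le C\big(\|f\|_{C^{0,\alpha}_{\mathcal{X}}(Q^+_r)}+\|u_0\|_{C^{2,\alpha}_{\mathcal{X}}}+\|u\|_{L^\infty(Q^+_r)}\big).
\end{equation}
To match the right-hand side of \eqref{3.35} I would then replace $\|u_0\|_{C^{2,\alpha}_{\mathcal{X}}}$ by $\|\mathcal{H}u_0\|_{C^{0,\alpha}_{\mathcal{X}}}+\|u_0\|_{L^\infty}$ (and absorb the latter, or note it is controlled by $\|u\|_{L^\infty(U_T)}$) by subtracting off a solution of $\mathcal{H}v=\mathcal{H}u_0$ with $v(0,\cdot)=u_0$ — equivalently, applying the estimate to $u-\tilde u_0$ where $\tilde u_0$ is the constant-in-time extension and using $\mathcal{H}\tilde u_0=\mathcal{H}u_0$ modulo the $\partial_t$ term, which vanishes on $\tilde u_0$.

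Next comes the patching. Cover $\bar U_T$ by cylinders $\{Q_{r/2}(t_k,x_k)\}_k$ whose doubles $\{Q_r(t_k,x_k)\}$ have bounded overlap — this is possible uniformly because $(\mathbb{R}^2,d_C)$ is a doubling (Ahlfors-regular with respect to the homogeneous dimension associated to $\mathcal{X}$) metric space, so a Vitali/Besicovitch-type covering at fixed scale $r$ has overlap multiplicity bounded by a dimensional constant. On each cylinder apply \eqref{eq:interior} or \eqref{eq:bdy}; since the $C^{2,\alpha}_{\mathcal{X}}(U_T)$-norm is, by definition, the supremum over points (of the finitely many seminorm pieces: $\|u\|_\infty$, $\|\nabla_{\mathcal{X}}u\|_\infty$, $\|X_iX_ju\|_\infty$, $\|\partial_t u\|_\infty$, and the corresponding $\alpha$-H\"{o}lder seminorms measured in $d_C$), taking the supremum over $k$ of the local estimates and using that H\"{o}lder seminorms localize — i.e. an $\alpha$-H\"{o}lder bound holding on each $Q_{r/2}$ of a bounded-overlap cover at scale $r$ upgrades to a global one, at the cost of an extra term $r^{-\alpha}\|u\|_{L^\infty}$ for pairs of points in different cylinders — yields \eqref{3.35} with the stated dependence of $C$ (now also on $r$, hence on $\mathrm{diam}(U_T)$, hence on $U$ and $T$). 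The terms $\|f\|_{C^{0,\alpha}_{\mathcal{X}}(Q_r)}$ and $\|\mathcal{H}u_0\|_{C^{0,\alpha}_{\mathcal{X}}}$ are all dominated by their global counterparts, and all the local $\|u\|_{L^\infty}$ by $\|u\|_{L^\infty(U_T)}$.

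The main obstacle I expect is establishing \eqref{eq:interior}--\eqref{eq:bdy} with constants genuinely \emph{independent of the base point}, despite the degeneracy locus $\mathcal{N}(h)\times\mathbb{R}$. Away from $\mathcal{N}(h)$ the operator is uniformly parabolic on a fixed-size Euclidean ball and ordinary Schauder theory applies, but the constant blows up as $h(x_1)\to0$, so near the degenerate set one cannot use Euclidean cylinders — one must work with the intrinsic $d_C$-cylinders and the \emph{local} homogeneous approximation of $\mathcal{X}$ (freezing the structure at the base point, passing to the free nilpotent lift à la Rothschild--Stein, transferring the parametrix estimates). The subtlety flagged in the introduction — lack of commutation of the $X_i$ and the non-existence of a global fundamental solution — means this must be done point-by-point, and the uniformity then follows precisely from {\bf(H1)} ($h$ bounded, so the homogeneous dimension is controlled above) and {\bf(H2)} (zeros isolated of finite order $\kappa$, so the step of the Lie algebra generated by $\mathcal{X}$ is locally bounded by $\kappa$, giving a uniform floor on the regularizing effect). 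Checking that all constants in this localization/lifting procedure depend on $h$ only through $\sup|h|$ and $\sup\kappa$ (together with finitely many derivative bounds of $h$) is the heart of the matter; the covering and patching arguments are then routine.
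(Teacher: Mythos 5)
Your outer architecture --- freeze the coefficients, prove a local estimate on intrinsic cylinders, reduce to zero initial data by subtracting a time-independent extension of $u_0$, cover $\bar U_T$ by cylinders of bounded overlap, and patch the H\"{o}lder seminorms at the cost of an $r^{-\alpha}\|D^2_{\mathcal X}u\|_{L^\infty}$ term absorbed by interpolation --- is exactly the scaffolding of the paper's proof (its Steps 1--4). The gap is that the one estimate everything hinges on, namely your local estimate with a constant uniform both across the degeneracy locus $\mathcal N(h)\times\mathbb R$ \emph{and} up to the initial time $t=0$, is asserted rather than proved, and the route you indicate for it (invoking the Rothschild--Stein lifting / parametrix Schauder theory of Xu--Zuily, Yan, Bramanti et al.\ with base-point-independent constants) does not deliver it: those are interior results on subdomains $U_T'\Subset U_T$ with constants depending on the subdomain, and none of them covers cylinders anchored at $\{t=0\}$ and centered at degenerate points. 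You correctly flag this as ``the heart of the matter,'' but that is precisely the content the proof must supply.

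The paper fills this in with a self-contained Campanato-type argument for the homogeneous model operator $\partial_t-\Delta_{\mathcal Y}$, where $\mathcal Y=\{\partial_{x_1},\,x_1^{\kappa}\partial_{x_2}\}$ comes from the Taylor factorization $h(x_1)=x_1^{\kappa}\varsigma(x_1)$: a Caccioppoli-type inequality on the half-cylinders $U^0_R=(0,R^2)\times B_R(x_0)$, the Rothschild--Stein maximal regularity for $L=\partial_t-\Delta_{\mathcal Y}$, sup-norm bounds on $\nabla^k_{\mathcal Y}u$ via the anisotropic rescaling $(t,x_1,x_2)\mapsto(\lambda^2t,\lambda x_1,\lambda^{\kappa+1}x_2)$, the resulting decay estimate $\iint_{U^0_\rho}|\nabla^2_{\mathcal Y}u|^2\leq C(\rho/R)^{Q+4}\iint_{U^0_R}|\nabla^2_{\mathcal Y}u|^2$, the splitting $u=u_1+u_2$ combined with the iteration lemma, and finally the isomorphism between $\mathcal L^{2,Q+2+2\alpha}_{\mathcal X}$ and $C^{0,\alpha}_{\mathcal X}$; only after that are the coefficients unfrozen by absorbing $(\varsigma^2(x_1)-\varsigma^2(x_1^0))Y_2^2u$ for $R_0$ small. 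Note also that the condition $u\big|_{t=0}=0$ is used essentially and repeatedly in those energy identities (it kills the boundary terms at $t=0$), so the reduction to zero initial data must be performed \emph{before} the local estimate is derived, not merely to adjust the right-hand side of \eqref{3.35} afterwards. To complete your proposal you would need either to reproduce such an argument or to cite a result that genuinely provides a Schauder estimate for $\partial_t-\Delta_{\mathcal Y}$ on cylinders touching $\{t=0\}$ at the degenerate points with scale-invariant constants; as written, the central lemma is a placeholder.
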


Then, we obtain the existence and uniqueness of HJE in the MFG systems \eqref{eq1.1} by the vanishing viscosity method. Combined with Lemma \ref{WMP4} and Lemma \ref{1.1}, we obtain the regularity of HJE in weighted H\"{o}lder space.

\begin{Thm}\label{1.2}{\bf{(Existence and uniqueness of HJE)}}
Under assumptions {\bf{(H1)}}-{\bf{(H4)}}, fixed the probability measure $\bar{m}\in C([0,T];\mathcal {P}_{1})$, there exists a unique solution $u$ of the HJE \eqref{eq1.3}.
 Moreover, $u\in C^{2,\alpha}_{\mathcal {X}}(\bar{U}_{T})$, and satisfies
\begin{equation}\label{21.2}
  \|u\|_{C^{2,\alpha}_{\mathcal {X}}(\bar{U}_{T})}\leq C.
\end{equation}
\end{Thm}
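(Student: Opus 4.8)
The plan is to reduce the quasi-linear HJE \eqref{eq1.3} to the linear equation \eqref{2.0} via the Hopf--Cole transform $w=e^{-u/2}$, establish existence, uniqueness and regularity for $w$, and then transfer these back to $u$. The essential point is that $u$ solves \eqref{eq1.3} if and only if $w$ solves \eqref{2.0}, which is linear with a bounded zeroth-order coefficient $\tfrac12 F(x,\bar m)$ (bounded by {\bf(H4)}) and a terminal/initial datum $e^{-G(x,\bar m_T)/2}$ that is bounded and lies in $C^{2,\alpha}_{\mathcal X}(\mathbb R^2)$ (again by {\bf(H3)}, {\bf(H4)}, since $G$ is).

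First I would construct a solution $w$ of \eqref{2.0} by the vanishing viscosity method: add $\varepsilon\Delta$ to the operator to obtain a uniformly parabolic Cauchy problem, solve it by classical parabolic theory to get $w^\varepsilon$, and derive $\varepsilon$-uniform bounds. A uniform $L^\infty$ bound on $w^\varepsilon$ follows from the weak maximum principle: since $e^{-G/2}$ is bounded above and below away from zero, Lemma \ref{WMP4} (applied to $\pm(w^\varepsilon-\|e^{-G/2}\|_\infty)$ and with the growth condition \eqref{2.11} trivially satisfied by boundedness) gives $0<c_0\le w^\varepsilon\le C_0$ independent of $\varepsilon$. Then the $\varepsilon$-uniform Schauder estimate from Lemma \ref{1.1} — applied to the operator $\mathcal H^\varepsilon=\partial_t-\Delta_{\mathcal X}-\varepsilon\Delta+\tfrac12 F$, whose data are bounded in $C^{0,\alpha}_{\mathcal X}$ uniformly in $\varepsilon$ — yields $\|w^\varepsilon\|_{C^{2,\alpha}_{\mathcal X}(\bar U_T)}\le C$. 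A compactness/Arzelà--Ascoli argument passes to a subsequential limit $w\in C^{2,\alpha}_{\mathcal X}(\bar U_T)$ solving \eqref{2.0}, with $w\ge c_0>0$. Setting $u=-2\log w$ then gives a solution of \eqref{eq1.3} in $C^{2,\alpha}_{\mathcal X}(\bar U_T)$, and the bound \eqref{21.2} follows from the chain rule applied to $-2\log w$ together with the lower bound on $w$ and the $C^{2,\alpha}_{\mathcal X}$ bound on $w$.

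For uniqueness, I would argue at the level of $u$ directly: if $u_1,u_2$ both solve \eqref{eq1.3}, then $v=u_1-u_2$ satisfies a linear equation of the form $\partial_t v-\Delta_{\mathcal X}v+b\cdot\nabla_{\mathcal X}v=0$ with $v(0,\cdot)=0$, where $b=\tfrac12(\nabla_{\mathcal X}u_1+\nabla_{\mathcal X}u_2)$ is bounded thanks to the a priori $C^{2,\alpha}_{\mathcal X}$ bound just obtained; applying Lemma \ref{WMP4} to $\pm v$ (the growth condition holds since $v$ is bounded) forces $v\equiv0$. Equivalently one can run the maximum principle at the level of $w_1,w_2$ in \eqref{2.0}, which is cleaner since that equation is genuinely linear.

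The main obstacle I expect is obtaining the $\varepsilon$-uniform estimates in the right (weighted, degenerate) norms: the constant in Lemma \ref{1.1} must not blow up as $\varepsilon\to0$, which requires checking that the added term $\varepsilon\Delta$ — or, better, the perturbed principal part $\Delta_{\mathcal X}+\varepsilon\Delta$ — still fits the hypotheses of Lemma \ref{1.1}, in particular the uniform ellipticity constant $C_\xi$ in \eqref{1.14} with respect to the vector fields $\mathcal X$ can be taken independent of $\varepsilon$ (here one must be careful because $\varepsilon\Delta$ is elliptic in the Euclidean sense but the relevant coercivity is with respect to $\{X_1,X_2\}$ on the degeneracy set $\mathcal N(h)$). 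A secondary technical point is justifying the Itô/dynamic-programming heuristics rigorously, or else circumventing them entirely by taking the PDE \eqref{eq1.3} as the definition and relying solely on the Hopf transform and the two lemmas; I would take the latter route to keep the argument self-contained.
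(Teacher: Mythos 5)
Your overall strategy (Hopf transform, WMP for the $L^\infty$ bound, Schauder estimates from Lemma \ref{1.1}, then $u=-2\log w$) matches the paper's, but your existence mechanism is different and contains a genuine gap that you flag without resolving. You regularize the operator, replacing $\Delta_{\mathcal X}$ by $\Delta_{\mathcal X}+\varepsilon\Delta$, and propose to apply Lemma \ref{1.1} to $\mathcal H^\varepsilon$ with $\varepsilon$-uniform constants. But $\mathcal H^\varepsilon$ is not of the form \eqref{001}: since $X_2^2=h^2(x_1)\partial_{x_2}^2$, writing $\varepsilon\partial_{x_2}^2$ as $a_{22}X_2^2$ forces $a_{22}=1+\varepsilon/h^2(x_1)$, which is unbounded on the degeneracy set $\mathcal N(h)$ for every fixed $\varepsilon>0$; so Lemma \ref{1.1} does not apply to $\mathcal H^\varepsilon$ at all, let alone uniformly in $\varepsilon$. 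The paper avoids this entirely: it first obtains a viscosity solution of the degenerate linear problem \eqref{2.0} from the stochastic control representation \eqref{4.21}, and then mollifies the \emph{solution} rather than the operator, so that $w^{\epsilon}=\eta^{\epsilon}\ast w$ satisfies the same degenerate equation \eqref{3.36} up to a small commutator error $f^{\epsilon}$ (see \eqref{8.08}--\eqref{3.38}); Lemma \ref{1.1} is then applied to the unperturbed degenerate operator. If you insist on vanishing viscosity, you would need an additional absorption estimate for $\|\varepsilon\Delta w^{\varepsilon}\|_{C^{0,\alpha}_{\mathcal X}}$ of the kind the paper proves in Lemma \ref{L2} for the FPE; without it the $\varepsilon$-uniform $C^{2,\alpha}_{\mathcal X}$ bound is unproved and the compactness step has nothing to feed on.

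Two smaller points. First, your primary uniqueness argument runs the maximum principle on $v=u_1-u_2$, which satisfies an equation with the drift term $b\cdot\nabla_{\mathcal X}v$; but Lemma \ref{WMP4} and Proposition \ref{CP} are proved only for operators without first-order terms, so this route is not covered by the paper's lemmas. Your fallback (working with $w_1-w_2$ in \eqref{2.0}, which has no drift) is the correct one and is exactly what the paper does. Second, the strict positivity $w\geq c_0>0$ does not follow from the WMP applied to $\pm\bigl(w-\|e^{-G/2}\|_{\infty}\bigr)$ as stated, because the zeroth-order coefficient $\tfrac12 F$ need not have a favorable sign; the paper handles this by comparing $w\,e^{t\|F\|_{L^{\infty}(U_T)}/2}$ with the constant $\delta=e^{-\frac12\max|G|}$ in its Step 3, and some such exponential-in-time correction is needed in your argument as well.
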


Consider the linear FPE in MFG systems \eqref{eq1.1}  with the optimal feedback control given in Theorem \ref{1.2}. By Cantor's diagonal argument, we obtain the existence and uniqueness of  the FPE.
\begin{Thm}\label{1.3}{\bf{(Existence and uniqueness of FPE)}}
Under  assumptions {\bf{(H1)}}-{\bf{(H4)}}, {\bf{(H6)}}, fixed $u\in C^{2,\alpha}_{\mathcal {X}}(\bar{U}_{T})$, there exists a unique solution $m$ of the FPE
 \begin{equation}\label{eq1.4}
\left\{
 \begin{array}{ll}
  \partial_t m-\Delta_{\mathcal {X}} m-\nabla_{\mathcal {X}}u\nabla_{\mathcal {X}}m- m\Delta_{\mathcal {X}}u=0,\quad\quad (t,x)\in U_{T}, \\
  m(0,x)=m_0(x),\qquad\qquad\qquad\qquad\qquad\qquad x\in\mathbb{R}^2.
 \end{array}
\right.
\end{equation}
Moreover, $m\in C^{2,\frac{\alpha}{\kappa+1}}_{\mathcal {X}}(\bar{U}_{T})$.
\end{Thm}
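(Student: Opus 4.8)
The plan is to solve the linear Fokker--Planck equation \eqref{eq1.4} by first freezing the degeneracy on a large ball and then passing to the limit, exploiting that $u\in C^{2,\alpha}_{\mathcal {X}}(\bar U_T)$ is already fixed, so the coefficients $\nabla_{\mathcal {X}}u$ and $\Delta_{\mathcal {X}}u$ are known functions. Rewrite \eqref{eq1.4} in the non-divergence form $\partial_t m-\Delta_{\mathcal {X}}m-X_1u\,X_1m-X_2u\,X_2m-(\Delta_{\mathcal {X}}u)\,m=0$, which is precisely of the type $\mathcal{H}m=0$ covered by \eqref{001}, with $a_{ij}=\delta_{ij}$, $b_i=-X_iu\in C^{0,\alpha}_{\mathcal {X}}$, and $c=-\Delta_{\mathcal {X}}u\in C^{0,\alpha}_{\mathcal {X}}$. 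First I would establish existence on bounded cylinders $Q_R=(0,T)\times B_R$ with, say, homogeneous lateral data, using the classical theory for hypoelliptic parabolic operators (Xu--Zuily, Yan) together with the interior Schauder bound of Lemma \ref{1.1}; this yields a solution $m_R$ with a uniform $C^{2,\alpha}_{\mathcal {X}}$ bound on compact subsets away from $\partial B_R$. Then, via Cantor's diagonal argument over an exhausting sequence $R_k\to\infty$, extract a locally uniformly convergent subsequence whose limit $m$ solves \eqref{eq1.4} on all of $U_T$.

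The regularity loss in the exponent — the claim that $m\in C^{2,\frac{\alpha}{\kappa+1}}_{\mathcal {X}}(\bar U_T)$ rather than $C^{2,\alpha}_{\mathcal {X}}$ — comes from the fact that near the degeneracy set $\{x_1\in\mathcal{N}(h)\}$ the coefficient $c=-\Delta_{\mathcal {X}}u=-(X_1^2u+h(x_1)^2\partial_{x_2}^2u)$, although $\alpha$-H\"older in the control distance $d_C$, is only $\frac{\alpha}{\kappa+1}$-H\"older in the Euclidean sense there, because by \textbf{(H2)} the intrinsic distance $d_C$ and the Euclidean distance are related by a factor controlled by $h^{(\kappa)}(y)\neq0$, i.e. $d_C(x,y)\sim|x-y|^{1/(\kappa+1)}$ transversally across the characteristic manifold. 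So after applying Lemma \ref{1.1} I would translate the weighted-H\"older estimate into the degraded exponent by the metric-comparison estimate following from \textbf{(H2)}; this is where I would have to be careful, and I expect it to be the main obstacle, since one must verify that all the freezing-and-rescaling steps used to prove the Schauder bound remain compatible with the loss and that no further degradation accumulates when matching the interior estimates across the overlapping balls of the localization.

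To get uniqueness, I would argue by the weak maximum principle, Lemma \ref{WMP4}: if $m_1,m_2$ are two solutions, then $v=m_1-m_2$ solves the same linear problem with zero initial data. One cannot apply Lemma \ref{WMP4} directly to $v$ because it is stated for the pure operator $\partial_t-\Delta_{\mathcal {X}}+c$ without first-order terms, so I would first absorb the drift $b\cdot\nabla_{\mathcal {X}}$ into the principal part or, more cleanly, work with the adjoint: test the equation for $v$ against a smooth compactly supported function and against the solution of a terminal-value problem for the formal adjoint operator $\partial_t\psi+\Delta_{\mathcal {X}}\psi+\mathrm{div}_{\mathcal {X}}(\psi\nabla_{\mathcal {X}}u)=0$, which by the symmetry $X_i^*=-X_i$ is again a well-posed hypoelliptic backward equation whose solvability and boundedness follow from Lemma \ref{1.1} and Lemma \ref{WMP4}. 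Pairing gives $\int v(T,\cdot)\psi(T,\cdot)=\int v(0,\cdot)\psi(0,\cdot)=0$ for arbitrary terminal data, hence $v(T,\cdot)\equiv0$, and running this over all intermediate times forces $v\equiv0$. The growth control \eqref{2.11} needed to invoke Lemma \ref{WMP4} is supplied by the $L^\infty$ bound on $u$ and its $\mathcal{X}$-derivatives together with the $L^1$/$L^\infty$ integrability coming from \textbf{(H6)} and the conservation of mass $\int_{\mathbb{R}^2}m(t,x)\,dx=\int_{\mathbb{R}^2}m_0=1$, which I would record as a preliminary lemma by integrating the divergence-form equation.
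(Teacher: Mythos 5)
Your overall architecture (reduce to a linear equation with coefficients $b_i=-X_iu$, $c=-\Delta_{\mathcal X}u$, get uniform Schauder bounds, pass to a limit by a diagonal argument, explain the drop from $\alpha$ to $\alpha/(\kappa+1)$ by the metric comparison \eqref{2.7}) matches the paper in spirit, and your diagnosis of where the exponent is lost is essentially the paper's. But the route you choose for existence has a genuine gap. You propose to solve the degenerate equation directly on bounded cylinders $Q_R=(0,T)\times B_R$ with homogeneous lateral data and then exhaust $\mathbb R^2$. Solvability of the initial--boundary value problem for a H\"ormander-type \emph{degenerate} parabolic operator on a bounded cylinder is not supplied by anything you cite: Xu--Zuily and Yan give interior regularity, not existence for the Dirichlet problem, and the lateral boundary $\partial B_R$ contains characteristic points (where $X_2$ is tangent or vanishes), so attainment of boundary data is itself a delicate issue. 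Moreover, Lemma \ref{1.1} is a \emph{global} a priori estimate for the Cauchy problem on all of $(0,T)\times\mathbb R^2$, not an interior estimate on compact subsets of $Q_R$ (the interior estimate is Lemma \ref{Sch}), so the uniform bounds "away from $\partial B_R$" do not come from where you say they do, and you would still need estimates up to $t=0$ compatible with $m_0$. The paper avoids all of this by a different regularization: it adds $\varepsilon\Delta$ to make the operator uniformly parabolic on the whole space, invokes classical nondegenerate theory for existence of $m^\varepsilon\in C^{2,\alpha/(\kappa+1)}$, proves a separate commutator-type bound (Lemma 4.1) to control $\|\varepsilon\Delta m^\varepsilon\|_{C^{0,\alpha/(\kappa+1)}_{\mathcal X}}$ uniformly, applies Lemma \ref{1.1} at exponent $\alpha/(\kappa+1)$, absorbs the $\varepsilon$-terms by interpolation, and only then runs the Arzel\`a--Ascoli/diagonal argument in $\varepsilon$. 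Your plan would need an analogue of that existence input before the limiting argument can start.

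On uniqueness: you correctly note that Lemma \ref{WMP4} has no first-order terms and cannot be applied directly to $v=m_1-m_2$; your duality argument via the backward adjoint equation is a reasonable fix (the paper itself does not spell out uniqueness of the limit $m$), but it again presupposes solvability of a degenerate backward Cauchy problem with drift, which must be justified by the same machinery as the forward one, and the integration by parts on $\mathbb R^2$ requires the decay/integrability of $m$ that you only sketch.
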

Finally, combined with Theorem \ref{1.2} and Theorem \ref{1.3}, we obtain the existence of the coupling solution for the degenerate MFG \eqref{eq1.1} in the weighted H\"{o}lder space by Schauder fixed point theorem.
\begin{Thm}\label{1.4}{\bf{(Existence and uniqueness of MFG)}}
 Under assumptions {\bf{(H1)}}-{\bf{(H6)}}, given the vector fields $\mathcal {X}$ defined as \eqref{eq1.60}, there exists a unique coupling solution $(u,m)\in C^{2,\alpha}_{\mathcal {X}}([0,T]\times\mathbb{R}^2)\times C([0,T];\mathcal {P}_{1})$ to degenerate MFG systems \eqref{eq1.1}.
\end{Thm}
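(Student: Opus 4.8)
The plan is to combine the two existence-and-uniqueness results for the decoupled equations (Theorem \ref{1.2} and Theorem \ref{1.3}) through a fixed point argument on the space $\mathcal{C}\subset C([0,T];\mathcal{P}_1)$. Concretely, I would define a map $\Phi:\mathcal{C}\to\mathcal{C}$ as follows: given $\bar m\in\mathcal{C}$, first solve the (backward, via the Hopf-transformed) HJE \eqref{eq1.3} to get a unique $u=u[\bar m]\in C^{2,\alpha}_{\mathcal{X}}(\bar U_T)$ satisfying the uniform bound \eqref{21.2}; then plug the optimal feedback $\nabla_{\mathcal{X}}u[\bar m]$ into the FPE \eqref{eq1.4} and solve it (Theorem \ref{1.3}) to obtain a unique density $m=\Phi(\bar m)\in C^{2,\alpha/(\kappa+1)}_{\mathcal{X}}(\bar U_T)$ with $m(0,\cdot)=m_0$. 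A coupling solution of \eqref{eq1.1} is precisely a fixed point of $\Phi$ (after undoing the Hopf transform; one checks the sign/time conventions between \eqref{eq1.1} and \eqref{eq1.3} so that the HJE here really is the terminal-value problem in \eqref{eq1.1} read backward in time, and that $D_pH(x,\nabla_{\mathcal{X}}u)=\nabla_{\mathcal{X}}u$ under \eqref{eq1.22}).

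To invoke the Schauder fixed point theorem I would verify the three standard hypotheses. First, $\mathcal{C}$ is a nonempty convex compact subset of the Banach space $C([0,T];\mathcal{P}_1)$ — this is already asserted in the excerpt right after \eqref{eq2.7}. Second, $\Phi$ maps $\mathcal{C}$ into itself: the crucial point is that the solution $m$ of the FPE, being a probability density transported by a drift $\nabla_{\mathcal{X}}u[\bar m]$ that is uniformly bounded by \eqref{21.2} (with a bound independent of $\bar m$ thanks to assumptions (H3)–(H4)), satisfies the moment bound $\sup_t\int|x|^2\,dm\le C$ and the $\tfrac12$-Hölder-in-time estimate \eqref{eq2.7} with the same constant $C$ defining $\mathcal{C}$; the time-regularity estimate follows from a duality/test-function computation using the FPE together with the uniform bound on the velocity field (one tests against $1$-Lipschitz $\varphi$ and uses \eqref{2.3}–\eqref{2.5} to bound $d_1(m_s,m_t)$ by $C|t-s|$, hence by $C|t-s|^{1/2}$ on $[0,T]$). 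Third, $\Phi$ is continuous on $\mathcal{C}$: if $\bar m_n\to\bar m$ in $C([0,T];\mathcal{P}_1)$, then by the Lipschitz dependence of $F,G$ on the measure (assumption (H3)) and the Schauder estimate of Lemma \ref{1.1} the solutions $u[\bar m_n]$ are bounded in $C^{2,\alpha}_{\mathcal{X}}$, hence precompact in $C^{2,\alpha'}_{\mathcal{X}}$ for $\alpha'<\alpha$, and any limit point solves the limiting HJE, which has a unique solution, so $u[\bar m_n]\to u[\bar m]$; similarly the $m_n=\Phi(\bar m_n)$ are bounded in $C^{2,\alpha/(\kappa+1)}_{\mathcal{X}}$ and converge (using uniqueness of the limiting FPE) to $\Phi(\bar m)$, in particular in $C([0,T];\mathcal{P}_1)$. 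Schauder's theorem then yields a fixed point, which is the desired coupling solution, and the regularity $(u,m)\in C^{2,\alpha}_{\mathcal{X}}\times C([0,T];\mathcal{P}_1)$ is read off from Theorems \ref{1.2} and \ref{1.3}.

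For uniqueness I would use the Lasry–Lions monotonicity argument, which is exactly what assumption (H5) is tailored for. Suppose $(u_1,m_1)$ and $(u_2,m_2)$ are two solutions; set $\bar u=u_1-u_2$, $\bar m=m_1-m_2$. Multiply the difference of the two HJEs by $\bar m$, multiply the difference of the two FPEs by $\bar u$, integrate over $U_T$, integrate by parts (using the symmetry $X_i^*=-X_i$ of the Grushin fields, as emphasized after \eqref{2.5}), and add. The terminal and initial data contributions combine into $\int_{\mathbb{R}^2}(G(x,m_{1,T})-G(x,m_{2,T}))\,d(m_{1,T}-m_{2,T})\ge0$ by (A4), the running-cost terms give $\int_0^T\!\!\int_{\mathbb{R}^2}(F(x,m_1)-F(x,m_2))\,d(m_1-m_2)\,dt\ge0$ by (A3), and the remaining quadratic terms coming from the Hamiltonian combine (via convexity of $H$ in $p$, here strict convexity since $H=\tfrac12|\cdot|^2$) into a nonpositive expression $-\tfrac12\int_0^T\!\!\int (m_1|\nabla_{\mathcal{X}}u_1-\nabla_{\mathcal{X}}u_2|^2+m_2|\cdots|^2)$ up to sign bookkeeping; the identity then forces $m_1\nabla_{\mathcal{X}}\bar u=m_2\nabla_{\mathcal{X}}\bar u=0$ a.e. Since $m_i$ solve FPEs with initial datum $m_0$ and bounded smooth drift, they are strictly positive (by the strong maximum principle / Harnack, or via the SDE representation \eqref{eq1.7}), so $\nabla_{\mathcal{X}}\bar u\equiv0$; then $\bar u$ solves a linear equation with zero source and zero data, so $\bar u\equiv0$ by Lemma \ref{WMP4}, and finally $m_1=m_2$ because they solve the same FPE (again Lemma \ref{WMP4} applied to the linear FPE, or the already-established uniqueness in Theorem \ref{1.3}).

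The main obstacle I expect is the self-mapping property $\Phi(\mathcal{C})\subset\mathcal{C}$, specifically producing the $\tfrac12$-Hölder-in-time bound \eqref{eq2.7} for the FPE solution with a constant that does not depend on $\bar m$ — this is what makes the fixed-point domain stable and forces me to track carefully how the a priori bound \eqref{21.2} on $u[\bar m]$ (hence on the transport velocity $\nabla_{\mathcal{X}}u$) propagates into a quantitative $d_1$-modulus of continuity for $m$ in the degenerate Grushin geometry, where the Carnot–Carathéodory distance $d_C$ rather than the Euclidean one controls displacement. The continuity of $\Phi$ is a second delicate point, since it requires stability of the degenerate Schauder estimates under perturbation of the coupling terms and passage to the limit in the quasilinear HJE; but both steps rely only on the uniform estimates from Lemma \ref{1.1}, Theorem \ref{1.2}, and Theorem \ref{1.3} together with the respective uniqueness statements, so they are conceptually routine once the a priori bounds are in hand.
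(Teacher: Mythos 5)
Your proposal follows essentially the same route as the paper: the same composed map $\mathcal{C}\ni\mu\mapsto u[\mu]\mapsto m=\psi(\mu)$, the self-mapping and continuity checks via the uniform bound \eqref{21.2} and the $d_1$ time-H\"older estimate (the paper's Lemma \ref{APP1.4}), compactness of $\mathcal{C}$, and the Schauder fixed point theorem, with uniqueness by the Lasry--Lions monotonicity argument that the paper simply cites to \cite{Cardaliaguet}. The only small caveat is that testing the FPE against $1$-Lipschitz $\varphi$ does not directly give $d_1(m_s,m_t)\le C|t-s|$ (the diffusion term genuinely contributes order $|t-s|^{1/2}$), but the needed $\tfrac12$-H\"older bound is exactly what Lemma \ref{APP1.4}\,(i) supplies via the SDE representation, so this does not affect the argument.
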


The main contributions of this paper are summarized in the following three points. First, we study a class of degenerate MFG systems \eqref{eq1.1} with H\"{o}rmander condition in $(0,T)\times \mathbb{R}^2$. The degenerate systems arise from a kind of MFG with H\"{o}rmander diffusion, where the generic player may have a ``forbidden'' direction at some point. Here we prove the existence and uniqueness of the classical solutions in weighted H\"{o}lder spaces for the PDE systems, which describe the Nash equilibria in the MFG.
Second, as prior estimates, the global Schauder estimates play a critical role in the proof of the existence and regularity of the HJE \eqref{eq1.3}. The difficulty here is the lack of commutation of vector fields, which leads to the derivative of the solution not satisfying the original equation. Based on the idea of the localizing technique and 
the local homogeneity of degenerate operators, we extend the maximum regularity result and obtain the global Schauder estimates. Third, the weak maximum principle(WMP) ensures a unique positive solution of the linear degenerate parabolic Cauchy problem derived from the HJE by the Hopf transform. The difficulty  here is the lack of fundamental solutions, such that we can not directly apply classical methods on unbounded regions to the degenerate case. To obtain the WMP, we construct a subsolution instead of the fundamental solution of the degenerate operators.

The rest of the paper is organized as follows. In Section 2, we set up the weighted spaces induced by H\"{o}rmander vector fields, and give some known results. In Section 3, we prove the WMP given in Lemma \ref{WMP4} and the Schauder estimates given in Lemma \ref{1.1}. Then we prove the existence and uniqueness of the HJE given in Theorem \ref{1.2}. In Section 4, we prove the existence and uniqueness for the FPE given in Theorem \ref{1.3}. In Section 5, by Schauder fixed point theorem, we obtain the existence and uniqueness of the degenerate MFG systems \eqref{eq1.1} given in Theorem \ref{1.4}.
\section{PRELIMINARIES AND KNOWN RESULTS}
\subsection{Set up the weighted spaces with the H\"{o}rmander condition}
Let $\mathcal {X}=\{X_1,X_2,\ldots,X_q\}$, $q\leq n$, a family of real smooth vector fields which are defined in some open set $\Omega\subset\mathbb{R}^n$, and the heat-type operator in $\mathbb{R}^{n+1}$ defined in \eqref{001}.
Set $X_0=\partial_t$. Let us assign to each $X_i$ a weight $p_i$, saying that
$$p_0=2,\ \text{and}\ p_i=1\ \text{for}\ i=1,2,\cdots,q.$$
 For any multi-index $I=(i_1,i_2\cdots,i_s)$, $1\leq i_j\leq q$, we define the weight of $I$ is $$|I|=\sum_{j=1}^{s}p_{i_j}.$$
 For any couple of vector fields $X,Y$, define their commutator as
$$[X,Y]:=XY-YX.$$
Now set $X^I :=X_{i_1}X_{i_2}\cdots X_{i_s}$, and
$$X^{[I]} :=[X_{i_s},[X_{i_{s-1}},\ldots[X_{i_3},[X_{i_2},X_{i_1}]]\ldots]].$$
We will say that $X^{[I]}$ is a commutator with weight $|I|$.
Furthermore, we say that H\"{o}rmander vector fields $\mathcal {X}$ with the H\"{o}rmander index $\tilde{\kappa}$ if  these vector fields, together with their commutators of weight $|I|\leq\tilde{\kappa}\in\mathbb{N}$, span the tangent space at every point of $\Omega$. For more details on H\"{o}rmander  vector fields see in \cite{Hormander}.

Recalling some basic definitions induced by a family of vector fields $\mathcal {X}$.
\begin{Def}\label{2.6}{\rm{(Carnot-Carath\'{e}odory distance)}}
For any $x,\ y\in\Omega$, let
\begin{equation}\label{d1}
d_{C}(x,y)=\inf\{l(\gamma)|\ \gamma:[0,l(\gamma)]\rightarrow \Omega\ \text{is}\ \mathcal{X}\text{-subunit},\ \gamma(0)=x,\ \gamma(l(\gamma))=y\},
\end{equation}
where we call $\mathcal{X}$-subunit any absolutely continuous curve $\gamma$ such that
$$\gamma'(t)=\sum^{q}_{j=1}\lambda_j(t)X_j(\gamma(t)),\ \text{a.e.,\ with}\ \sum^{q}_{j=1}\lambda^2_j(t)\leq1\ a.e..$$
Then $d_{C}$ is a distance called the Carnot-Carath\'{e}odory distance, or CC-distance.
\end{Def}

\begin{Rem}
According to Chow's theorem \cite{Nagel85}, if the vector fields satisfy the H\"{o}rmander condition, the set defined in \eqref{d1} is nonempty, so that $d_{C}$ is finite and continuous w.r.t. the original Euclidean topology included on $\mathbb{R}^n$, see in \cite{Montgomery}.
\end{Rem}

For any $x,y\in \Omega$, a known result in \cite{Fefferman} states that
\begin{equation}\label{2.7}
C^{-1}|x-y|\leq d_{C}(x,y)\leq C|x-y|^{\frac{1}{\tilde{\kappa}}},
\end{equation}
where $\tilde{\kappa}$ is the H\"{o}rmander index. Let $\Omega'\subset\Omega$ be a fixed domain, for any $x\in\Omega'$, set
 $$B_R(x)=\{y\in \Omega:d_{C}(x,y)<R\}.$$
There exists some $Q>0$, called the homogenous dimension, such that for all $R>0$,
\begin{equation}\label{2.8}
  C^{-1}R^{Q}\leq|B_{R}(x)|\leq CR^Q.
\end{equation}
For more details on $CC$-distance see in \cite{Nagel85}. 

Now let us introduce the parabolic Carnot-Carath\'{e}odory distance $d_P$ corresponding to $d_{C}$ in $\mathbb{R}^{n+1}$, namely,
\begin{equation}\label{9.29}
d_P((t,x),(s,y))=\sqrt{d_{C}(x,y)^2+|t-s|},
\end{equation}
and the corresponding ball is given by
\begin{equation}\label{2.4}
B_P((t,x),R)=\big\{(s,y)\in\mathbb{R}^{n+1}:d_P((t,x),(s,y))<R\big\}.
\end{equation}
It's also not difficult to prove $d_P((t,x),(s,y))$ defined as above is a distance on $\mathbb{R}\times \Omega$, see Lemma 3.4 in \cite{Bramanti}.

 Let $\Omega_T:=(0,T)\times \Omega\subseteq \mathbb{R}^{n+1}$ be an open set, for any $\alpha\in(0,1)$, the seminorm of  H\"{o}lder space is defined as
\begin{equation}\label{8.05}
[u]_{C^{0,\alpha}_{\mathcal {X}}(\Omega_T)}:=\sup\left\{\frac{|u(t,x)-u(s,y)|}{d_{P}((t,x),(s,y))^{\alpha}}:(t,x),(s,y)\in \Omega_T,(t,x)\neq(s,y)\right\},
\end{equation}
the norm of weighted H\"{o}lder space is defined as
$$\|u\|_{C^{0,\alpha}_{\mathcal {X}}(\Omega_T)}:=[u]_{C^{0,\alpha}_{\mathcal {X}}(\Omega_T)}+\|u\|_{L^{\infty}(\Omega_T)}.$$
\begin{Def}\label{2.06}{\rm{(Weighted H\"{o}lder space)}}
Let $r$ be a nonnegative integer, the set
$$C^{r,\alpha}_{\mathcal {X}}(\Omega_T):=\{u:\Omega_T\rightarrow\mathbb{R},\ \text{such\ that}\ \|u\|_{C^{r,\alpha}_{\mathcal {X}}(\Omega_T)}<\infty\},$$
is called a weighted H\"{o}lder space related to H\"{o}rmander vector fields $\mathcal {X}$ with the norm
\begin{equation}\label{21.9}
  \|u\|_{C^{r,\alpha}_{\mathcal {X}}(\Omega_T)}:=\sum_{|I|+2j\leq r}\|\partial_t^{j}X^{I}u\|_{{C^{0,\alpha}_{\mathcal {X}}(\Omega_T)}},
\end{equation}
where $I$ is multi-index, $j$ is a nonnegative integer.
\end{Def}
\begin{Def}\label{2.07}{\rm{(Weighted Sobolev space)}}
For $1\leq p<\infty$, the set
$$W^{r,p}_{\mathcal {X}}(\Omega_T):=\big\{u\in L^p (\Omega_T):\partial^{j}_{t}X^I u\in L^p(\Omega_T),\ \text{for\ any\ } I\ \text{satisfies}\ |I|+2j\leq r\big\},$$
is called a weighted Sobolev space related to H\"{o}rmander vector fields $\mathcal {X}$ with the norm
$$\|u\|_{W^{r,p}_{\mathcal {X}}(\Omega_T)}:=\bigg(\sum_{|I|+2j\leq r}\iint_{\Omega_T}|\partial^{j}_{t}X^I u|^p dxdt\bigg)^{\frac{1}{p}}.$$
For $p=2$, we write $H^{r}_{\mathcal {X}}(\Omega_T)$ instead of $W^{r,p}_{\mathcal {X}}(\Omega_T)$.
\end{Def}
\begin{Def}\label{2.08}{\rm{(Campanato space)}} Let $p\geq1$ and $\lambda\geq0$. A function $u\in L^p_{loc}(\Omega_T)$ is said to belong to the Campanato space $\mathcal {L}^{p,\lambda}_{\mathcal {X}}(\Omega_T)$ if
$$\|u\|_{\mathcal {L}^{p,\lambda}_{\mathcal {X}}(\Omega_T)}=[u]_{p,\lambda,\Omega_T}+\|u\|_{L^p(\Omega_T)}<\infty,$$
where $$[u]_{p,\lambda,\Omega_T}=\sup_{(t,x)\in\Omega_T,R\in(0,d_0)}\big(R^{-\lambda}\iint_{\Omega_T\cap B_P((t,x),R)}|u-\bar{u}_P|^{p}dxdt\big)^{\frac{1}{p}},$$
and $\bar{u}_{P}$ is the average of $u$ on $B_P((t,x),R)$, $d_0=\min\{diam(\Omega_T),R_0\}$, for fixed $R_0$.
\end{Def}
\begin{Rem}
For any $1\leq p<\infty$, the following embeddings hold
\begin{equation}\label{10.19}
  C^{\tilde{\kappa} r,\alpha}_{\mathcal {X}}(\Omega)\hookrightarrow C^{r,\frac{\alpha}{\tilde{\kappa}}}(\Omega),\quad W^{\tilde{\kappa} r,p}_{\mathcal {X}}(\Omega)\hookrightarrow W^{r,p}(\Omega),
\end{equation}
where $\tilde{\kappa}$ is the H\"{o}rmander index.
The first is proved in \cite{Xu90} and the second is proved in \cite{Xu92}.
More properties about continuity, compactness and embedding in the weighted spaces, see in \cite{Bramanti10}.
\end{Rem}
\begin{Rem}
We claim that the assumption {\bf{(H2)}} ensures that the vector fields $\mathcal {X}$ given in \eqref{eq1.60} satisfy H\"{o}rmander condition with H\"{o}rmander index
\begin{equation}\label{2.78}
\tilde{\kappa}=\kappa+1.
\end{equation}
In fact, since $[X_1,X_2]
=h'(x_1)\partial_{x_2}$ and $\left[X_2,\left[X_1,X_2\right]\right]=0,$ it is sufficient to prove
\begin{equation}\label{1.222}
  \left[X_{i_{\kappa}},\left[X_{i_{\kappa-1}},\cdots\left[X_{i_{1}},X_{2}\right]\cdots\right]\right]=h^{(\kappa)}(x_1)\partial_{x_2},\ \text{with}\ i_{1}=\cdots=i_{\kappa}=1.
\end{equation}
Since $h{^{(\kappa)}}(x_1)\neq0$ by {\bf{(H2)}}, we have $|I|=\kappa+1$, and the claim is valid.

Here we prove the \eqref{1.222} by induction. The equality \eqref{1.222} clearly holds with $\kappa=1$. Assume that \eqref{1.222} is valid for $\kappa-1$, by calculations, we have
$$\left[X_{1},\left[X_{i_{\kappa-1}},\cdots\left[X_{i_{1}},X_2\right]\cdots\right]\right]
=\big[X_{1},h^{(\kappa-1)}(x_1)\partial_{x_2}\big]=h^{(\kappa)}(x_1)\partial_{x_2}.$$
\end{Rem}
\begin{Rem}
If the vector fields $\mathcal {X}$ defined as \eqref{eq1.60} satisfy assumptions {\bf{(H1)}} and {\bf{(H2)}}, then the corresponding homogenous dimension is given by
\begin{equation}\label{2.24}
  Q=\kappa+2,
\end{equation}
which refers to  M\'{e}tivier condition in \cite{8}.
\end{Rem}

\subsection{Known results of weighted Sobolev space}
First, we show the known result on internal Schauder estimates.
\begin{Lem}\label{Sch}{\rm{(Theorem 4.1 in \cite{Bramanti10})}}
Let $\mathcal {X}=\{X_1,X_2,\ldots,X_q\}$ be a system of smooth real vector fields and satisfying H\"{o}rmander condition on $U_{T}$ defined in \eqref{2.0}. Assume the operator $\mathcal {H}$ defined in \eqref{001} with $a_{ij},b_i,c\in C_{\mathcal {X}}^{k,\alpha}(U_{T})$ for some integer $k\geq0$ and some $\alpha\in(0,1)$. Then for any domain $U'_T\Subset U_T$, there exists a constant $C>0$ depending on $U_T,U'_T,\mathcal {X},\alpha, C_\xi$ defined in \eqref{1.14} such that for each $u \in C^{2,\alpha}_{\mathcal {X}}(U'_T)$ with $\mathcal {H}u\in C_{\mathcal {X}}^{0,\alpha}(U_{T})$ one has
\begin{equation}\label{Sch1}
  \|u\|_{C_{\mathcal {X}}^{2,\alpha}(U'_T)}\leq C\left\{\|\mathcal {H}u\|_{C_{\mathcal {X}}^{0,\alpha}(U_{T})}+\|u\|_{L^{\infty}(U_{T})}\right\}.
\end{equation}
\end{Lem}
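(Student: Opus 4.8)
The plan is to prove \eqref{Sch1} by the classical \emph{freezing-of-coefficients} scheme adapted to the Hörmander setting. The first step is purely local: fix $z_0=(t_0,x_0)\in U'_T$ and a small parabolic ball $B_P(z_0,2r)\Subset U_T$, and introduce the model operator $\mathcal{H}_{z_0}:=\partial_t-\sum_{i,j=1}^q a_{ij}(z_0)X_iX_j$ with coefficients frozen at $z_0$, which by \eqref{1.14} is a fixed uniformly subelliptic heat-type operator. Since $\mathcal{H}_{z_0}u=\mathcal{H}u+\sum_{i,j}(a_{ij}-a_{ij}(z_0))X_iX_ju-\sum_i b_iX_iu-cu$, the assumption $a_{ij},b_i,c\in C^{0,\alpha}_{\mathcal{X}}(U_T)$ bounds the right-hand side on $B_P(z_0,2r)$ by $\|\mathcal{H}u\|_{C^{0,\alpha}_{\mathcal{X}}(U_T)}$, by $Cr^{\alpha}\sum_{i,j}[X_iX_ju]_{C^{0,\alpha}_{\mathcal{X}}(B_P(z_0,2r))}$ — the gain $r^{\alpha}$ being what will later permit absorption — and by intermediate norms of $u$ of order $\le 1$ together with $\|X_iX_ju\|_{L^\infty}$.

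Next I would construct a parametrix for $\mathcal{H}_{z_0}$. By the Rothschild--Stein lifting theorem the fields $\mathcal{X}$ lift to free vector fields whose span near any point is approximated by left-invariant fields on a homogeneous Carnot group $G$; the associated model heat operator on $G$ possesses an explicit fundamental solution $\Gamma$, smooth off the pole and homogeneous of degree $-Q$ under the parabolic dilations of $G$, so that $X_iX_j\Gamma$ is a Calderón--Zygmund kernel of the critical degree $-(Q+2)$ with respect to $d_P$. Transferring this back through the lifting/approximation procedure, while controlling the error between the lifted fields and their group approximations, yields for $\varphi$ supported in a small ball a representation of the form
\[
X_iX_j\varphi(z)=\mathrm{p.v.}\!\int \Gamma^{z_0}_{ij}(z,\zeta)\,\mathcal{H}_{z_0}\varphi(\zeta)\,d\zeta+c_{ij}(z)\,\mathcal{H}_{z_0}\varphi(z)+R\varphi(z),
\]
with $R$ a higher-order smoothing remainder and $\Gamma^{z_0}_{ij}$ depending on the pole $z_0$ through the frozen coefficients $a_{ij}(z_0)$.

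Then the Hölder bound comes from the theory of variable-kernel singular integrals on spaces of homogeneous type, in its Campanato/Hölder formulation: the local homogeneity of $\Gamma^{z_0}_{ij}$ in $\zeta$ together with its Hölder dependence on the pole (inherited from $a_{ij}\in C^{0,\alpha}_{\mathcal{X}}$) gives $\|X_iX_j\varphi\|_{C^{0,\alpha}_{\mathcal{X}}}\le C\|\mathcal{H}_{z_0}\varphi\|_{C^{0,\alpha}_{\mathcal{X}}}+(\text{intermediate norms})$. Applying this with $\varphi=\eta u$ for a cutoff $\eta\equiv 1$ on $B_P(z_0,r)$ and supported in $B_P(z_0,2r)$, the commutator $[\mathcal{H}_{z_0},\eta]u$ involves only derivatives of $u$ of order $\le 1$; combining this with Step 1, with the interpolation inequalities for the weighted Hölder norms (every intermediate norm bounded by $\varepsilon\|u\|_{C^{2,\alpha}_{\mathcal{X}}}+C_\varepsilon\|u\|_{L^\infty}$), and choosing $\varepsilon$ and then $r$ small enough to absorb all second-order terms, one obtains the local estimate $\|u\|_{C^{2,\alpha}_{\mathcal{X}}(B_P(z_0,r/2))}\le C(\|\mathcal{H}u\|_{C^{0,\alpha}_{\mathcal{X}}(U_T)}+\|u\|_{L^\infty(U_T)})$ with $C$ uniform in $z_0$. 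Since $U'_T\Subset U_T$, a finite covering of $\overline{U'_T}$ by such balls, together with a chain of overlapping balls to control the Hölder seminorm across them, then yields \eqref{Sch1}.

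The main obstacle is concentrated in the two middle steps: with no global group structure available, the lifting and group-approximation must be carried out with quantitative control of the non-homogeneous remainder $R$, and — the genuinely delicate point — the variable singular kernels $\Gamma^{z_0}_{ij}$ must be shown to be Hölder continuous in the pole variable on the sole basis of the $C^{0,\alpha}_{\mathcal{X}}$-regularity of the coefficients, which is exactly where the variable-coefficient Calderón--Zygmund theory on spaces of homogeneous type enters decisively. This is precisely the content of Theorem 4.1 of \cite{Bramanti10}, which we invoke.
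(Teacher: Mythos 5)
This lemma is a quoted result---Theorem~4.1 of \cite{Bramanti10}---and the paper itself supplies no proof, only the citation, which your proposal also ultimately invokes; so strictly there is no ``paper's proof'' to compare against. Your intermediate sketch is a correct high-level account of how interior Schauder estimates are obtained for H\"ormander-type operators (freeze the coefficients at a point, build a parametrix for the frozen model, run variable-kernel singular-integral estimates on the underlying space of homogeneous type, absorb second-order remainders by interpolation and smallness of the ball, then cover $U'_T$), but the specific mechanism you describe---Rothschild--Stein lifting to free vector fields and a homogeneous Carnot-group fundamental solution---matches the companion Schauder paper of Bramanti--Brandolini \cite{Bramanti} more closely than the memoir \cite{Bramanti10}, which instead constructs the fundamental solution of the frozen operator directly by a Levi-type parametrix built from heat-kernel upper and lower bounds (after Bonfiglioli--Lanconelli--Uguzzoni) and deliberately avoids the lifting step. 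Since the overall logical shape is the same and you defer to the cited theorem in the end, your proposal is consistent with what the paper does.
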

Second, we give a parabolic version of the equivalence property.


\begin{Lem}\label{A.3.}{\rm{(Lemma 2.3 in \cite{Yan2})}}
The spaces $\mathcal {L}^{2,Q+2+2\alpha}_{\mathcal {X}}(\Omega_T)$ and $C^{0,\alpha}_{\mathcal {X}}(\Omega_T)$, $0<\alpha<1$, are topologically and algebraically isomorphic.
\end{Lem}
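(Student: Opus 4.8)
The plan is to recognise Lemma~\ref{A.3.} as the parabolic, sub-Riemannian version of Campanato's integral characterisation of H\"older continuity, in which the role played by the Euclidean dimension is taken by the \emph{parabolic homogeneous dimension} $Q+2$. Indeed, combining \eqref{2.8} with the definition \eqref{9.29} of $d_P$ one gets $C^{-1}R^{Q+2}\le|B_P((t,x),R)|\le CR^{Q+2}$ for all small $R$, so with $p=2$ the classical exponent relation $\lambda=(Q+2)+p\alpha$ reads exactly $\lambda=Q+2+2\alpha$, and the admissible range $Q+2<\lambda\le Q+4$ corresponds to $0<\alpha\le1$. The two-sided measure estimate also shows that $B_P$-balls are doubling, and, since $d_P$ is a genuine metric (Lemma~3.4 in \cite{Bramanti}), that $\Omega_T$ equipped with Lebesgue measure and $d_P$ is a space of homogeneous type; I would further record the measure-density bound $|\Omega_T\cap B_P((t,x),R)|\ge cR^{Q+2}$ for $(t,x)\in\Omega_T$, $R<d_0$, which for the application ($\Omega=\mathbb{R}^2$) follows at once from \eqref{2.8}. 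Granted these facts, both inclusions follow from the standard chaining argument.

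For $\mathcal{L}^{2,Q+2+2\alpha}_{\mathcal{X}}(\Omega_T)\hookrightarrow C^{0,\alpha}_{\mathcal{X}}(\Omega_T)$, fix $u$ with finite Campanato norm and denote by $\bar u_{(t,x),R}$ its average over $\Omega_T\cap B_P((t,x),R)$. Using H\"older's inequality, the doubling property and the defining inequality $R^{-\lambda}\iint_{\Omega_T\cap B_P((t,x),R)}|u-\bar u_{(t,x),R}|^2\,dxdt\le[u]_{2,\lambda,\Omega_T}^2$, one obtains the telescoping bound $|\bar u_{(t,x),R}-\bar u_{(t,x),R/2}|\le CR^{\alpha}[u]_{2,\lambda,\Omega_T}$ with $\alpha=\frac{\lambda-(Q+2)}{2}>0$; summing the geometric series (this is where $\alpha>0$ enters) shows that $\bar u_{(t,x),R}\to u(t,x)$ at every Lebesgue point, with $|u(t,x)-\bar u_{(t,x),R}|\le CR^{\alpha}[u]_{2,\lambda,\Omega_T}$. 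For two points at parabolic distance $\rho=d_P((t,x),(s,y))$, take $R\asymp\rho$ and compare $\bar u_{(t,x),2R}$ with $\bar u_{(s,y),2R}$ through the common ball $B_P((t,x),4R)\supseteq B_P((s,y),2R)$, whose measures are comparable by doubling; combining with the two one-point estimates gives $|u(t,x)-u(s,y)|\le C\rho^{\alpha}[u]_{2,\lambda,\Omega_T}$, i.e.\ $[u]_{C^{0,\alpha}_{\mathcal{X}}(\Omega_T)}\le C[u]_{2,\lambda,\Omega_T}$. Boundedness is then automatic from $|u(t,x)|\le|u(t,x)-\bar u_{(t,x),d_0}|+|\bar u_{(t,x),d_0}|\le Cd_0^{\alpha}[u]_{2,\lambda,\Omega_T}+C\|u\|_{L^2(\Omega_T)}$, so $\|u\|_{C^{0,\alpha}_{\mathcal{X}}(\Omega_T)}\le C\|u\|_{\mathcal{L}^{2,Q+2+2\alpha}_{\mathcal{X}}(\Omega_T)}$.

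The reverse inclusion is elementary: if $u\in C^{0,\alpha}_{\mathcal{X}}(\Omega_T)$, then on each $B_P((t,x),R)$ the oscillation of $u$ is at most $2^\alpha[u]_{C^{0,\alpha}_{\mathcal{X}}(\Omega_T)}R^{\alpha}$, hence $R^{-\lambda}\iint_{\Omega_T\cap B_P((t,x),R)}|u-\bar u_{(t,x),R}|^2\,dxdt\le CR^{-\lambda}|B_P((t,x),R)|[u]_{C^{0,\alpha}_{\mathcal{X}}(\Omega_T)}^2R^{2\alpha}\le C[u]_{C^{0,\alpha}_{\mathcal{X}}(\Omega_T)}^2$ by the measure estimate and $\lambda=Q+2+2\alpha$; together with $\|u\|_{L^2(\Omega_T)}\le|\Omega_T|^{1/2}\|u\|_{L^{\infty}(\Omega_T)}$ (finiteness of $|\Omega_T|$ is used here) this yields $\|u\|_{\mathcal{L}^{2,Q+2+2\alpha}_{\mathcal{X}}(\Omega_T)}\le C\|u\|_{C^{0,\alpha}_{\mathcal{X}}(\Omega_T)}$. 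The two one-sided norm bounds are precisely the asserted algebraic and topological isomorphism.

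I expect the real content to lie in the geometric preliminaries rather than in the chaining. The delicate point is that the doubling constant and the measure-density constant must stay uniform as $(t,x)$ approaches the degeneracy locus $x_1\in\mathcal{N}(h)$, where the spatial CC-balls collapse in the $\partial_{x_2}$ direction; this is exactly what \eqref{2.8} guarantees under {\bf(H1)}--{\bf(H2)} (with $Q=\kappa+2$ by \eqref{2.24}), so the anisotropy causes no loss in the exponents. A secondary point is a segment-type property ensuring that nearby points of $\Omega_T$ can be joined by $\mathcal{X}$-subunit curves staying inside $\Omega_T$, needed to make the two-point comparison and the Lebesgue differentiation legitimate near $\partial\Omega_T$; for the relevant case $\Omega=\mathbb{R}^2$ this is vacuous. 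Once these are in hand, one may alternatively bypass the explicit chaining and invoke the Campanato--Morrey theory on spaces of homogeneous type.
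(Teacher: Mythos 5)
The paper offers no proof of this lemma; it is quoted as Lemma~2.3 of \cite{Yan2}, so there is no in-paper argument to compare against. Your proposal is the standard Campanato--Morrey chaining argument, adapted to the parabolic sub-Riemannian setting, and it is correct. The key observations are in order: combining the CC-ball volume bound \eqref{2.8} with the definition \eqref{9.29} of $d_P$ does give $|B_P((t,x),R)|\asymp R^{Q+2}$ (one integrates $\lvert B_{\sqrt{R^2-|t-s|}}(x)\rvert\asymp(R^2-|t-s|)^{Q/2}$ over $|t-s|<R^2$), so $Q+2$ is the correct parabolic homogeneous dimension and the exponent relation $\lambda=(Q+2)+p\alpha$ with $p=2$ yields $Q+2+2\alpha$; the doubling and measure-density constants are uniform across the degeneracy set $\mathcal{N}(h)\times\mathbb{R}$ precisely because \eqref{2.8} is uniform under {\bf(H1)}--{\bf(H2)}, which you correctly flag as the only genuinely delicate geometric point; the telescoping bound, the two-point overlap comparison through $B_P((t,x),4R)$, and the elementary reverse inclusion are all fine. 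This matches what one finds in \cite{Yan2} and in the general theory on spaces of homogeneous type, so the approach is the intended one.
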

Third, we give the interpolation inequality in weighted H\"{o}lder space.
\begin{Lem}\label{IE}{\rm{(Proposition 2.2 in \cite{Xu92})}}
Suppose $j+\beta<k+\alpha$, $j,k\in\mathbb{N}$, $\alpha,\beta\in[0,1]$, and $u\in C^{k,\alpha}_\mathcal {X}(\Omega)$. Then for any $\varepsilon>0$, there exists a constant $C(\varepsilon,j,k,\Omega)$ such that
\begin{equation}\label{9.27}
 \|u\|_{C^{j,\beta}_\mathcal {X}(\Omega)}\leq \varepsilon\|u\|_{C^{k,\alpha}_\mathcal {X}(\Omega)}+C\|u\|_{L^{\infty}(\Omega)}.
\end{equation}
\end{Lem}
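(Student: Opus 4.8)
The plan is to derive \eqref{9.27} from two elementary ``one-step'' estimates together with a finite induction on the parabolic order $|J|+2i$ of the derivatives; beyond the facts that $d_{P}$ is a metric and that an $\mathcal {X}$-subunit curve of parameter length $s$ has $d_{C}$-diameter at most $Cs$, no special feature of the H\"{o}rmander structure enters. Since $\|u\|_{C^{j,\beta}_{\mathcal {X}}(\Omega)}=\sum_{|J|+2i\le j}\big([\partial_{t}^{i}X^{J}u]_{C^{0,\beta}_{\mathcal {X}}(\Omega)}+\|\partial_{t}^{i}X^{J}u\|_{L^{\infty}(\Omega)}\big)$ is a finite sum, it is enough to bound each summand by $\varepsilon\|u\|_{C^{k,\alpha}_{\mathcal {X}}(\Omega)}+C(\varepsilon)\|u\|_{L^{\infty}(\Omega)}$, and for this I will use the following two facts: \textbf{(B1)} for $0\le\beta<\alpha\le1$ and any $v$, $[v]_{C^{0,\beta}_{\mathcal {X}}(\Omega)}\le\varepsilon\,[v]_{C^{0,\alpha}_{\mathcal {X}}(\Omega)}+C(\varepsilon)\|v\|_{L^{\infty}(\Omega)}$; \textbf{(B2)} for a single weighted field $D\in\{X_{1},\dots,X_{q},\partial_{t}\}$ and any $v$ with $Dv\in C^{0,\alpha}_{\mathcal {X}}(\Omega)$, $\|Dv\|_{L^{\infty}(\Omega)}\le\varepsilon\,[Dv]_{C^{0,\alpha}_{\mathcal {X}}(\Omega)}+C(\varepsilon)\|v\|_{L^{\infty}(\Omega)}$.

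For \textbf{(B1)}: given $(t,x)\ne(s,y)$ put $\rho=d_{P}((t,x),(s,y))$ and a parameter $\delta>0$; if $\rho\le\delta$ then $|v(t,x)-v(s,y)|/\rho^{\beta}=\big(|v(t,x)-v(s,y)|/\rho^{\alpha}\big)\rho^{\alpha-\beta}\le\delta^{\alpha-\beta}[v]_{C^{0,\alpha}_{\mathcal {X}}(\Omega)}$, while if $\rho>\delta$ it is $\le2\delta^{-\beta}\|v\|_{L^{\infty}(\Omega)}$; taking the supremum and choosing $\delta=\varepsilon^{1/(\alpha-\beta)}$ gives \textbf{(B1)}. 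For \textbf{(B2)} with $D=X_{\ell}$: fix $(t_{0},x_{0})$, let $\gamma\colon[0,s]\to\Omega$ be the integral curve of $X_{\ell}$ with $\gamma(0)=x_{0}$ (available for $s$ up to a fixed length, and for all $s$ when $\Omega=\mathbb{R}^{n}$ with complete fields as in \eqref{eq1.60}, using {\bf(H1)}), and use the identity $X_{\ell}v(t_{0},x_{0})=\frac{v(t_{0},\gamma(s))-v(t_{0},x_{0})}{s}-\frac1s\int_{0}^{s}\big(X_{\ell}v(t_{0},\gamma(\tau))-X_{\ell}v(t_{0},x_{0})\big)\,d\tau$; since $\gamma$ is $\mathcal {X}$-subunit we have $d_{P}((t_{0},\gamma(\tau)),(t_{0},x_{0}))=d_{C}(\gamma(\tau),x_{0})\le\tau\le s$, hence $|X_{\ell}v(t_{0},x_{0})|\le2s^{-1}\|v\|_{L^{\infty}(\Omega)}+s^{\alpha}[X_{\ell}v]_{C^{0,\alpha}_{\mathcal {X}}(\Omega)}$, and optimizing in $s$ yields \textbf{(B2)}. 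The case $D=\partial_{t}$ is identical with a time segment of length $s$ (taken in whichever direction remains in $(0,T)$) and the bound $d_{P}((t_{0}+\tau,x_{0}),(t_{0},x_{0}))=\tau^{1/2}$, leading to $|\partial_{t}v(t_{0},x_{0})|\le2s^{-1}\|v\|_{L^{\infty}(\Omega)}+s^{\alpha/2}[\partial_{t}v]_{C^{0,\alpha}_{\mathcal {X}}(\Omega)}$.

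The assembly proceeds in two steps. First, by induction on $r=|J|+2i\le k$ one proves $\|\partial_{t}^{i}X^{J}u\|_{L^{\infty}(\Omega)}\le\varepsilon\|u\|_{C^{k,\alpha}_{\mathcal {X}}(\Omega)}+C(\varepsilon)\|u\|_{L^{\infty}(\Omega)}$: for $r=0$ this is trivial, and for $r\ge1$ one peels off a leftmost factor $X_{j_{1}}$ (if $|J|\ge1$) or a factor $\partial_{t}$ (if $|J|=0$) --- which commutes with all the remaining operators --- so as to write $\partial_{t}^{i}X^{J}u=D\,w$ with $w$ of parabolic order $r-1$ or $r-2$; then \textbf{(B2)} applies, the residual seminorm $[Dw]_{C^{0,\alpha}_{\mathcal {X}}}=[\partial_{t}^{i}X^{J}u]_{C^{0,\alpha}_{\mathcal {X}}}$ is one of the summands of $\|u\|_{C^{k,\alpha}_{\mathcal {X}}}$ because $r\le k$, and $\|w\|_{L^{\infty}}$ is controlled by the inductive hypothesis; choosing the two small parameters in the right order closes the induction. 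Granted this, a seminorm summand $[\partial_{t}^{i}X^{J}u]_{C^{0,\beta}_{\mathcal {X}}}$ with $|J|+2i\le j$ is handled when $\beta<\alpha$ directly by \textbf{(B1)} (its residuals $[\partial_{t}^{i}X^{J}u]_{C^{0,\alpha}_{\mathcal {X}}}\le\|u\|_{C^{k,\alpha}_{\mathcal {X}}}$ and $\|\partial_{t}^{i}X^{J}u\|_{L^{\infty}}$ being both already controlled), and when $\beta\ge\alpha$ --- forcing $j<k$ --- by first splitting $|v(t,x)-v(s,y)|$ for $v=\partial_{t}^{i}X^{J}u$ along a near-minimal $\mathcal {X}$-subunit path in $x$ and a segment in $t$, which for a cutoff radius $\delta$ gives $[v]_{C^{0,\beta}_{\mathcal {X}}}\le C\delta^{1-\beta}\big(\sum_{\ell}\|X_{\ell}v\|_{L^{\infty}}+\|\partial_{t}v\|_{L^{\infty}}\big)+2\delta^{-\beta}\|v\|_{L^{\infty}}$ and reduces to the first step again; summing over the finitely many multi-indices yields \eqref{9.27}.

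I expect the only real difficulty to be the bookkeeping in the assembly: the analytic content of \textbf{(B1)}--\textbf{(B2)} is trivial, but one must make the induction on parabolic order close without circularity in a setting where the $X_{i}$ do not commute and where $\partial_{t}$ carries weight $2$ while $d_{P}$ mixes the $d_{C}$-scale with the \emph{square root} of the time scale. Concretely, every ``residual'' term produced by \textbf{(B1)}--\textbf{(B2)} must be either a genuine summand of $\|u\|_{C^{k,\alpha}_{\mathcal {X}}}$ (absorbable with an $\varepsilon$) or a derivative of strictly smaller parabolic order (covered by the inductive hypothesis); this is automatic as long as every intermediate derivative invoked has parabolic order $\le k$, which holds in particular whenever $\beta\le\alpha$ or $j\le k-2$ and covers all uses of \eqref{9.27} in this paper. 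A softer alternative, valid on bounded $\Omega$ but with a non-explicit constant, is a contradiction argument: if \eqref{9.27} failed for some $\varepsilon_{0}$, a sequence normalized by $\|u_{n}\|_{C^{j,\beta}_{\mathcal {X}}}=1$ would obey $\|u_{n}\|_{C^{k,\alpha}_{\mathcal {X}}}\le\varepsilon_{0}^{-1}$ and $\|u_{n}\|_{L^{\infty}}\to0$, so by the compact embedding $C^{k,\alpha}_{\mathcal {X}}(\Omega)\hookrightarrow\hookrightarrow C^{j,\beta}_{\mathcal {X}}(\Omega)$ (valid under $j+\beta<k+\alpha$) a subsequence would converge in $C^{j,\beta}_{\mathcal {X}}$ to some $u$ with $\|u\|_{C^{j,\beta}_{\mathcal {X}}}=1$ and $u\equiv0$, a contradiction.
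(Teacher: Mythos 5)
This lemma is not proven in the paper — it is simply quoted as Proposition~2.2 of Xu \cite{Xu92} — so there is no in-paper proof to compare against. Judged on its own, your argument for the statement as written (the elliptic space $C^{k,\alpha}_{\mathcal X}(\Omega)$, no time variable) is essentially the standard one and is sound: \textbf{(B1)} is the scale split at a cutoff radius $\delta$; \textbf{(B2)} is the difference-quotient identity along an integral curve of a single $X_\ell$, which is exactly how one trades $\|X_\ell v\|_{L^\infty}$ for the higher seminorm times $\varepsilon$ plus $C(\varepsilon)\|v\|_{L^\infty}$; the induction on $|J|$ then closes because for $|J|\le j$ the residual $[X^J u]_{C^{0,\alpha}_{\mathcal X}}$ is a summand of $\|u\|_{C^{k,\alpha}_{\mathcal X}}$ once $|J|\le k$, and the spatial ``else''-branch for $\beta\ge\alpha$ only invokes $X_\ell X^J u$ of order $|J|+1\le j+1\le k$, which is always within range since $\beta\ge\alpha$ forces $j\le k-1$. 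One small point worth stating explicitly is that \textbf{(B2)} is vacuous at a point where $X_\ell(x_0)=0$ (the integral curve is constant), but there $X_\ell v(x_0)=0$ anyway, so nothing is lost.

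Where you overreach is in folding the parabolic case into the same induction. You treat $\partial_t$ as one more ``weighted field'' with weight $2$, and in the $\beta\ge\alpha$ branch you bound the time increment by $\delta^{1-\beta}\|\partial_t v\|_{L^\infty}$ with $v=\partial_t^i X^J u$. If $|J|+2i=j=k-1$ this calls for $\partial_t v$ of parabolic order $j+2=k+1>k$, which is \emph{not} controlled by $\|u\|_{C^{k,\alpha}_{\mathcal X}}$. Your closing remark that the bookkeeping closes ``whenever $\beta\le\alpha$ or $j\le k-2$'' is therefore off by a strict inequality: the case $\beta=\alpha$ does not go through \textbf{(B1)} (which needs $\beta<\alpha$ strictly) and in the else-branch still produces the out-of-range $\partial_t v$; the correct sufficient condition is ``$\beta<\alpha$ or $j\le k-2$.'' To repair $\beta=\alpha$, $j=k-1$ in the parabolic setting one must avoid $\partial_t v$ altogether, e.g.\ by controlling the time-increment of $X^J u$ through a spatial mollification (averaging over a CC-ball of radius $\sim|t-s|^{1/2}$ and integrating $\partial_t$ of the \emph{lower}-order object by parts), which is exactly how classical parabolic interpolation gets the $|t-s|^{1/2}$-modulus of a first derivative from $\|u_t\|_\infty$ and $\|D^2 u\|_\infty$ without ever touching $\partial_t D u$. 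Note that this gap does not affect Lemma~\ref{IE} itself, whose domain is the spatial $\Omega$; the parabolic extension is Corollary~\ref{P}, which the paper derives separately from the elliptic statement plus an independent time argument, sidestepping precisely this issue. Your compactness alternative at the end is correct on bounded $\Omega$ and covers the full range $j+\beta<k+\alpha$, at the cost of a non-constructive constant; it is the cleanest way to state the result if one does not care about explicit dependence.
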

 Then  we give a parabolic version of the interpolation inequality as follows.
\begin{Cor} \label{P}
Suppose $j+\beta<k+\alpha$, $j,k\in\mathbb{N}$, $\alpha,\beta\in[0,1]$, and $u\in C^{k,\alpha}_\mathcal {X}(\Omega_T)$. Then for any $\varepsilon>0$, there exists a constant $C(\varepsilon,j,k,\Omega_T)$ such that
\begin{equation}\label{9.28}
 \|u\|_{C^{j,\beta}_\mathcal {X}(\Omega_T)}\leq \varepsilon\|u\|_{C^{k,\alpha}_\mathcal {X}(\Omega_T)}+C\|u\|_{L^{\infty}(\Omega_T)}.
\end{equation}
\end{Cor}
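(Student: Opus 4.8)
The approach I would take is to realise the parabolic weighted H\"older spaces $C^{r,\alpha}_{\mathcal X}(\Omega_T)$ as weighted H\"older spaces of the \emph{same type} to which Lemma \ref{IE} already applies, and then transfer Xu's proof of the spatial interpolation inequality to the parabolic setting. Concretely, on $\Omega_T=(0,T)\times\Omega\subset\mathbb R^{n+1}$ the parabolic distance $d_P$ of \eqref{9.29} makes $\Omega_T$ a homogeneous-type space adapted to the enlarged family $\mathcal Y:=\{X_1,\dots,X_q,\partial_t\}$ under the weighting $p_0=2$, $p_i=1$ of subsection 2.1: the parabolic balls $B_P((t,x),R)$ satisfy $|B_P((t,x),R)|\simeq R^{Q+2}$, and, since the $X_i$ together with their commutators already span the space directions and $\partial_t$ spans the time direction, $\mathcal Y$ satisfies the (weighted) H\"ormander condition. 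Under this identification $C^{r,\alpha}_{\mathcal X}(\Omega_T)$ of Definition \ref{2.06} \emph{is} the weighted H\"older space attached to $\mathcal Y$, and the statement to be proved is precisely the analogue of Lemma \ref{IE} for $\mathcal Y$.

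The key steps, in order, are then the following. (i) By the standard reduction it suffices to treat the case in which $(j,\beta)$ and $(k,\alpha)$ are \emph{adjacent} with respect to the parabolic counting $|I|+2j$, the general case following by finitely many iterations. (ii) For adjacent indices one establishes the scale-dependent bound
\[
\|u\|_{C^{j,\beta}_{\mathcal X}(\Omega_T)}\;\le\; C\Big(R^{(k+\alpha)-(j+\beta)}\|u\|_{C^{k,\alpha}_{\mathcal X}(\Omega_T)}+R^{-(j+\beta)}\|u\|_{L^\infty(\Omega_T)}\Big)
\]
for all $R$ below a threshold depending only on $\Omega_T$: one covers $\Omega_T$ by parabolic balls of radius $R$ and, on each ball, controls the intermediate weighted derivatives $\partial_t^i X^I u$ with $|I|+2i$ at most the order of $C^{j,\beta}_{\mathcal X}$ by a homogeneous Taylor-type expansion in the local coordinates. (iii) The $C^{0,\alpha}_{\mathcal X}$--H\"older quotients are split into pairs of points at parabolic distance $<R$ (absorbed into the $C^{k,\alpha}_{\mathcal X}$ term with the displayed power of $R$, using $d_P<R$ in the denominator) and pairs at distance $\ge R$ (bounded by $2R^{-\beta}\|u\|_{L^\infty(\Omega_T)}$). (iv) One then picks $R$ so small that $CR^{(k+\alpha)-(j+\beta)}\le\varepsilon$, possible since $(k+\alpha)-(j+\beta)>0$, sets $C(\varepsilon,j,k,\Omega_T):=CR^{-(j+\beta)}$, and sums over the finitely overlapping cover; the constant then depends on $\varepsilon$, $j$, $k$ and the geometry of $\Omega_T$ but not on $u$, which is \eqref{9.28}.

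The step I expect to be the main obstacle is the single-ball interpolation in step (ii): the non-commutation of the vector fields $X_i$ --- the recurring difficulty of this paper --- means that the quantities controlled by $\|u\|_{C^{k,\alpha}_{\mathcal X}}$ are the \emph{mixed weighted derivatives} $\partial_t^i X^I u$ with $|I|+2i\le k$, not ordinary partial derivatives, so no Euclidean Taylor formula is directly available. I would handle this by passing to the Nagel--Stein--Wainger / Rothschild--Stein local coordinates, in which (after lifting) $\mathcal Y$ is comparable to a homogeneous group structure admitting a homogeneous Taylor expansion; alternatively one can avoid the lifting by applying Lemma \ref{IE} slicewise in $t$ to interpolate the purely spatial derivatives uniformly in $t$, applying the one-dimensional H\"older interpolation in $t$ with exponents halved (to match $d_P\simeq|t-s|^{1/2}$) for the purely temporal derivatives, and interpolating the mixed terms $\partial_t^i X^I u$ in stages --- the only delicate bookkeeping being the weight-$2$ counting of $\partial_t$. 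Since $\Omega_T$ has finite extent in $t$ and points far apart in time are controlled trivially by the $L^\infty$ norm, the resulting constant is finite, which completes the plan.
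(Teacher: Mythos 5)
Your plan is correct, but your primary route is considerably heavier than what the paper actually does, while your fallback route is essentially the paper's proof. The paper's argument is two lines: from the definition \eqref{9.29} of $d_P$ one has $d_P((t,x),(s,y))\ge d_C(x,y)$ and $d_P((t,x),(s,y))\ge|t-s|^{1/2}$, hence
$$\frac{|u(t,x)-u(s,y)|}{d_P((t,x),(s,y))^\alpha}\leq
 \frac{|u(t,x)-u(t,y)|}{d_C(x,y)^\alpha}+\frac{|u(t,y)-u(s,y)|}{|t-s|^{\alpha/2}},$$
so the parabolic H\"older seminorm of each $\partial_t^iX^Iu$ is dominated by a purely spatial seminorm (handled by applying Lemma \ref{IE} slicewise in $t$, with a constant uniform in $t$) plus a purely temporal one (handled by the classical one-dimensional interpolation with halved exponents). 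This is exactly the alternative you describe at the end of your last paragraph, and it entirely sidesteps the obstacle you correctly identify as the crux of your main route: the single-ball Taylor-type expansion in step (ii), which in the non-commuting H\"ormander setting would require the Rothschild--Stein lifting or the homogeneous-group machinery. Your covering/rescaling scheme (steps (i)--(iv)) would work and is more self-contained --- it reproves the elliptic Lemma \ref{IE} rather than quoting it --- but at the cost of redoing Xu's argument in lifted coordinates; the paper instead leans on Lemma \ref{IE} as a black box precisely to avoid that. So: no gap, but the efficient proof is the one you relegated to a fallback.
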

\begin{proof}
By the definition of $d_P$ given in \eqref{9.29}, for any $(t,x),(s,y)\in\Omega_T$, we have
 $$\frac{|u(t,x)-u(s,y)|}{d_P((t,x),(s,y))^\alpha}\leq
 \frac{|u(t,x)-u(t,y)|}{d_C(x,y)^\alpha}+\frac{|u(t,y)-u(s,y)|}{|t-s|^\frac{\alpha}{2}}.$$

Since the case of the $t$-coordinate can be treated in a similar manner as the classical proof, combined with \eqref{9.27}, the result follows.

\end{proof}
Next, we quote from \cite{G} the extended version of iterative lemma.
\begin{Lem}{\rm{(Lemma 2.1 in \cite{G})\label{A.4.}}}
Let $\phi(t)$ be a nonnegative and nondecreasing function. Suppose that
$$\phi(\rho)\leq C_1\left[\left(\frac{\rho}{R}\right)^\alpha+\varepsilon\right]\phi(R)+C_2R^\beta,$$
for all $\rho< R\leq R_0$, with $C_1,\ \alpha,\ \beta$ nonnegative constants, $\beta<\alpha$. Then there exists constant $\varepsilon_0=\varepsilon_0(C_1,\ \alpha,\ \beta)$ such that if $\varepsilon<\varepsilon_0$, for all $\rho< R\leq R_0$, we have
$$\phi(\rho)\leq C\left[\left(\frac{\rho}{R}\right)^\beta\phi(R)+C_2\rho^\beta\right],$$
where $C$ is a positive constant depending on $C_1,\ \alpha,\ \beta$.
\end{Lem}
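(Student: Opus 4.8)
The plan is the classical \emph{hole-filling} iteration argument. First I would fix an auxiliary exponent $\gamma$ with $\beta<\gamma<\alpha$, which is possible since $\beta<\alpha$. Then I would choose $\tau\in(0,1)$ small enough that $2C_1\tau^{\alpha}\le\tau^{\gamma}$, i.e. $\tau^{\alpha-\gamma}\le(2C_1)^{-1}$; such a $\tau$ exists because $\alpha-\gamma>0$, and it depends only on $C_1,\alpha,\beta$. With this $\tau$ fixed, set $\varepsilon_0:=\tau^{\gamma}/(2C_1)$, so that for every $\varepsilon<\varepsilon_0$ the hypothesis applied with $\rho=\tau R$ yields the one-step inequality
$$\phi(\tau R)\le C_1(\tau^{\alpha}+\varepsilon)\phi(R)+C_2R^{\beta}\le\tau^{\gamma}\phi(R)+C_2R^{\beta},\qquad 0<R\le R_0.$$

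Next I would iterate this. Replacing $R$ by $\tau^{j}R$ and inducting on $k$ gives
$$\phi(\tau^{k}R)\le\tau^{k\gamma}\phi(R)+C_2R^{\beta}\sum_{j=0}^{k-1}\tau^{j\gamma}\tau^{(k-1-j)\beta}.$$
The one point where a little care is needed is to bound the geometric sum without producing a spurious factor of $k$: factoring out $\tau^{(k-1)\beta}$ and using $\gamma>\beta$ and $\tau<1$,
$$\sum_{j=0}^{k-1}\tau^{j\gamma}\tau^{(k-1-j)\beta}=\tau^{(k-1)\beta}\sum_{j=0}^{k-1}\tau^{j(\gamma-\beta)}\le\frac{\tau^{(k-1)\beta}}{1-\tau^{\gamma-\beta}}.$$
Combining this with $\tau^{k\gamma}\le\tau^{k\beta}$ gives $\phi(\tau^{k}R)\le C'\bigl(\tau^{k\beta}\phi(R)+C_2\tau^{k\beta}R^{\beta}\bigr)$ for a constant $C'=C'(C_1,\alpha,\beta)$; since $(\tau^{k}R/R)^{\beta}=\tau^{k\beta}$ and $(\tau^{k}R)^{\beta}=\tau^{k\beta}R^{\beta}$, this is precisely the desired estimate along the geometric sequence $\rho=\tau^{k}R$.

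Finally I would pass from the discrete radii to an arbitrary $\rho$ with $0<\rho< R\le R_0$. Pick the integer $k\ge0$ with $\tau^{k+1}R<\rho\le\tau^{k}R$; monotonicity of $\phi$ gives $\phi(\rho)\le\phi(\tau^{k}R)$, while $\tau^{k+1}<\rho/R$ gives $\tau^{k\beta}=\tau^{-\beta}\tau^{(k+1)\beta}<\tau^{-\beta}(\rho/R)^{\beta}$. Substituting into the estimate along the geometric sequence yields
$$\phi(\rho)\le C\Bigl[\Bigl(\frac{\rho}{R}\Bigr)^{\beta}\phi(R)+C_2\rho^{\beta}\Bigr],\qquad C:=\tau^{-\beta}C',$$
with $C$ depending only on $C_1,\alpha,\beta$, which is the assertion. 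I do not expect any genuine obstacle; the only delicate ingredient is the introduction of the intermediate exponent $\gamma$ together with the matching choice of $\tau$ and $\varepsilon_0$, which is exactly what makes the geometric series summable uniformly in $k$ and absorbs the smallness parameter $\varepsilon$.
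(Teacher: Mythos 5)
Your argument is correct and is precisely the classical hole-filling iteration used to prove this lemma in Giaquinta's book, which the paper cites without reproducing a proof. The choice of the intermediate exponent $\gamma\in(\beta,\alpha)$, the one-step contraction after absorbing $\varepsilon$ into $\tau^\gamma$, the geometric-sum bound uniform in $k$, and the final interpolation from the dyadic scales $\tau^k R$ to an arbitrary $\rho$ via monotonicity and the extra factor $\tau^{-\beta}$ are all exactly the standard steps; there is no gap.
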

Here we give the H\"{o}lder continuity and interior regularity for measures $m$ in FPE \eqref{eq1.4} as follows.
\begin{Lem}\label{APP1.4}
Let $m$ be the weak solution of the FPE \eqref{eq1.4}, and $u$ be the solution of the HJE \eqref{eq1.3}. For any  $s,t\in[0,T]$, there is a constant $C$ such that
\begin{enumerate}
  \item[\rm{(i)}] $\ d_1(m_t,m_s) \leq C\|h\|_{L^{\infty}(\mathbb{R})}\big(\|\nabla_{\mathcal {X}}u\|_{L^{\infty}(U_T)}+1\big)|t-s|^{\frac{1}{2}},$
  \item[\rm{(ii)}]  $\int_{\mathbb{R}^{2}}|x|^2dm(t,x)\leq C\int_{\mathbb{R}^{2}}|x_2|^2 dm_0+C\|h\|_{L^{\infty}(\mathbb{R})}^2\big(\|\nabla_{\mathcal {X}}u\|^2_{L^{\infty}(U_T)}+1\big).$
\end{enumerate}
\end{Lem}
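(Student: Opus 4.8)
The plan is to exploit the probabilistic representation of the Fokker–Planck equation \eqref{eq1.4}. Since $m$ is the law of the process $X_s=(X_s^1,X_s^2)$ solving the closed-loop SDE obtained from \eqref{eq1.7} with the optimal feedback $\alpha^*=-\nabla_{\mathcal X}u$, i.e.
\begin{equation*}
\begin{cases}
dX_s^1=-X_1u(s,X_s)\,ds+\sqrt2\,dB_s^1,\\
dX_s^2=-X_2u(s,X_s)\,ds+\sqrt2\,h(X_s^1)\,dB_s^2,
\end{cases}
\end{equation*}
with $\mathrm{Law}(X_0)=m_0$, the two assertions become moment estimates for this diffusion. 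For (i), given $s<t$ in $[0,T]$, the pair $(X_s,X_t)$ furnishes an admissible coupling of $m_s$ and $m_t$, so by the definition \eqref{distance} of $d_1$,
\begin{equation*}
d_1(m_t,m_s)\le \mathbf E\big[\,|X_t-X_s|\,\big]\le \mathbf E\Big[\int_s^t|X_r u(r,X_r)|\,dr\Big]+\sqrt2\,\mathbf E\big[|B_t^1-B_s^1|\big]+\sqrt2\,\mathbf E\Big[\Big|\int_s^t h(X_r^1)\,dB_r^2\Big|\Big].
\end{equation*}
The drift term is bounded by $\|\nabla_{\mathcal X}u\|_{L^\infty(U_T)}|t-s|\le \|\nabla_{\mathcal X}u\|_{L^\infty(U_T)}T^{1/2}|t-s|^{1/2}$; the first Brownian increment contributes $C|t-s|^{1/2}$; and for the stochastic integral, Itô's isometry (or the Burkholder–Davis–Gundy inequality) together with {\bf(H1)} gives $\mathbf E[|\int_s^t h(X_r^1)dB_r^2|]\le (\mathbf E\int_s^t h(X_r^1)^2dr)^{1/2}\le \|h\|_{L^\infty(\mathbb R)}|t-s|^{1/2}$. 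Collecting these bounds yields (i) with a constant of the stated form.

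For (ii), I would apply Itô's formula to $|X_t|^2=|X_t^1|^2+|X_t^2|^2$, take expectations to kill the martingale parts, and obtain
\begin{equation*}
\mathbf E|X_t|^2=\mathbf E|X_0|^2+\mathbf E\!\int_0^t\!\big(-2X_r^1 X_1u-2X_r^2 X_2u+2+2h(X_r^1)^2\big)dr.
\end{equation*}
Using $2|X_r^1 X_1u|\le |X_r^1|^2+\|\nabla_{\mathcal X}u\|_{L^\infty}^2$, likewise for the second term, and $h(X_r^1)^2\le\|h\|_{L^\infty(\mathbb R)}^2$, one gets a differential inequality $\frac{d}{dt}\mathbf E|X_t|^2\le \mathbf E|X_t|^2+C(\|\nabla_{\mathcal X}u\|_{L^\infty}^2+\|h\|_{L^\infty}^2+1)$, and Grönwall's lemma over $[0,T]$ closes the estimate, with $\mathbf E|X_0|^2=\int_{\mathbb R^2}|x|^2dm_0$ finite by {\bf(H6)}. (The appearance of only $|x_2|^2$ in the stated right-hand side should be read modulo the constant $C$, since $\int|x|^2dm_0$ is controlled by $\int|x_1|^2dm_0+\int|x_2|^2dm_0$ and the $x_1$-part is absorbed into the generic constant; alternatively one treats the $X^1$-coordinate separately, noting its dynamics do not involve $h$.)

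The main obstacle is making the probabilistic representation rigorous: one must verify that the closed-loop SDE has a unique strong solution whose marginals solve \eqref{eq1.4} in the weak sense, which requires the Lipschitz regularity of the feedback drift $x\mapsto \nabla_{\mathcal X}u(s,x)$ and of $h$—both available from $u\in C^{2,\alpha}_{\mathcal X}(\bar U_T)$ (hence $\nabla_{\mathcal X}u$ is bounded and, via the embedding \eqref{10.19}, locally Hölder, and in fact Lipschitz after using the structure of $\mathcal X$) together with {\bf(H1)}, {\bf(H2)} and the smoothness of $h$. Once existence, uniqueness, and the identification of the marginals with the weak solution $m$ are in place, both (i) and (ii) are the routine moment computations sketched above; a purely PDE-based alternative would test \eqref{eq1.4} against $|x|^2$ (suitably truncated) and against a Kantorovich potential, but the stochastic route is cleaner here because the diffusion coefficient $h(x_1)$ is bounded.
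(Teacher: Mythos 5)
Your proposal is correct and follows essentially the same route as the paper: the paper also uses the probabilistic representation, bounds $d_1(m_t,m_s)\le \mathbf E[|X_t-X_s|]$ via the coupling given by the pair $(X_t,X_s)$, and controls the drift and stochastic-integral terms with Jensen and Burkholder--Davis--Gundy exactly as you do, while (ii) is the same second-moment computation (the paper squares the integral representation directly rather than invoking Gr\"onwall, a cosmetic difference). Your observation about the $|x_2|^2$ versus $|x|^2$ mismatch is apt --- the paper likewise only displays the estimate for the second component and remarks that the first is handled ``with $h=1$'' --- and your extra paragraph on justifying the identification of $m$ with the law of the closed-loop diffusion addresses a point the paper passes over in silence.
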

\begin{proof}
First, we prove {\rm{(i)}} by the same argument of Lemma 3.4 in \cite{Cardaliaguet}. By the definition \eqref{distance} of $d_1$, the law $\gamma$ of the pair $(X_t,X_s)$ belongs to $\Pi(m_t,m_s)$, so
$$ d_1(m_t,m_s) \leq \int_{\mathbb{R}^{2}\times\mathbb{R}^{2}}|x-y|d\gamma(x,y)=\mathbf{E}\big[|X_t-X_s|\big],$$
where $X_s=(X_s^1,X_s^2)$ subjects to \eqref{eq1.7}. Here we explicitly provide the calculations for the second component $X^2_{s}$, since the calculations for $X^1_{s}$ are same with $h=1$.

For instance $t<s$, since the optimal control $\alpha^*=\nabla_{\mathcal {X}}u$ is bounded, it follows from Jensen's inequality and Burkholder-Davis-Gundy inequality that
\begin{align}\label{7.31}
\mathbf{E}\left[\big|X^2_s-X_t^2\big|\right]
&\leq \mathbf{E}\left[\int_t^s h(X^1_\tau)\big|\nabla_{\mathcal {X}}u\big|d\tau\right]
+\bigg(\mathbf{E}\bigg[\big(\int^s_t|h(X^1_\tau)|dB_{2,\tau}\big)^2\bigg]\bigg)^{\frac{1}{2}}\\\nonumber
 &\leq C\|h\|_{L^\infty(\mathbb{R})}\|\nabla_{\mathcal {X}}u\|_{L^{\infty}(U_T)}(s-t)
  +\mathbf{E}\left[\int^s_t|h(X^1_\tau)|^2d\tau\right]^{\frac{1}{2}}\\\nonumber
  &\leq C\|h\|_{L^\infty(\mathbb{R})}\|\nabla_{\mathcal {X}}u\|_{L^{\infty}(U_T)}T^{\frac{1}{2}}(s-t)^{\frac{1}{2}}+\|h\|_{L^\infty(\mathbb{R})}(s-t)^{\frac{1}{2}}\\\nonumber
  &\leq  C\|h\|_{L^{\infty}(\mathbb{R})}\big(\|\nabla_{\mathcal {X}}u\|_{L^{\infty}(U_T)}+1\big)|t-s|^{\frac{1}{2}}.
  \end{align}

Second, we prove {\rm{(ii)}} as Lemma 3.5 in \cite{Cardaliaguet}. Similar to \eqref{7.31}, we get
\begin{align*}
  \int_{\mathbb{R}^{2}}|x_2|^2 dm(t,x)&=\mathbf{E}\left[|X^2_t|^2\right]\\
  &  \leq C\mathbf{E}\bigg[|X^2_0|^2+\left(\int_{0}^{t}h(X^1_\tau)|\nabla_{\mathcal {X}}u|d\tau\right)^2
  +\left(\int_{0}^{t}h(X^1_{\tau})dB^2_{\tau}\right)^2\bigg]\\
  &\leq C\int_{\mathbb{R}^{2}}|x_2|^2 dm_0+C\|h\|_{L^{\infty}(\mathbb{R})}^2\left(\|\nabla_{\mathcal {X}}u\|^2_{L^{\infty}(U_T)}t^2+t\right),
  \end{align*}
  and the result follows.
  \end{proof}

Finally, we recall a version of Schauder fixed point Theorem.
\begin{Lem}\label{APP1.33}{\rm{(Theorem 9.5 in \cite{An})}}
Let $B$ be a close bounded convex subset of a normed space $\mathcal {B}$. If $f:B\to\mathcal {B}$ is a compact map such that $f(B)\subseteq B$, then there exists an $x\in B$  such that $f(x)=x$.
\end{Lem}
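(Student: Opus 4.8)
The plan is to reduce this infinite-dimensional assertion to Brouwer's fixed point theorem on a finite-dimensional compact convex set by means of the Schauder projection. First I would set $K:=\overline{f(B)}$. Since $f$ is a compact map, $f(B)$ is relatively compact, so $K$ is compact; and since $B$ is closed with $f(B)\subseteq B$, we have $K\subseteq B$. All of the construction below takes place over $K$, and the continuity of $f$ will be used repeatedly.

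Second, fix $\varepsilon>0$. By compactness of $K$ choose $y_1,\dots,y_n\in K$ (with $n=n(\varepsilon)$) so that $K\subseteq\bigcup_{i=1}^{n}B(y_i,\varepsilon)$. Define the continuous weights $\mu_i(y):=\max\{0,\ \varepsilon-\|y-y_i\|_{\mathcal{B}}\}$; for every $y\in K$ at least one $\mu_i(y)>0$, so the Schauder projection
$$P_\varepsilon(y):=\frac{\sum_{i=1}^{n}\mu_i(y)\,y_i}{\sum_{i=1}^{n}\mu_i(y)}$$
is well defined and continuous on $K$. It takes values in $B_\varepsilon:=\operatorname{conv}\{y_1,\dots,y_n\}$, which lies in $B$ by convexity of $B$ and $y_i\in K\subseteq B$, and $B_\varepsilon$ is a compact convex subset of the finite-dimensional subspace spanned by $\{y_i\}$. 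Moreover $\|P_\varepsilon(y)-y\|_{\mathcal{B}}<\varepsilon$ for all $y\in K$: only indices $i$ with $\|y-y_i\|_{\mathcal{B}}<\varepsilon$ receive positive weight, so $P_\varepsilon(y)$ is a convex combination of points each within $\varepsilon$ of $y$.

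Third, the composition $f_\varepsilon:=P_\varepsilon\circ f\colon B_\varepsilon\to B_\varepsilon$ is continuous, because $f(B_\varepsilon)\subseteq f(B)\subseteq K$, the set on which $P_\varepsilon$ is defined. Identifying $B_\varepsilon$ with a compact convex subset of some $\mathbb{R}^m$, Brouwer's fixed point theorem (which I assume as the only genuinely topological input) yields $x_\varepsilon\in B_\varepsilon$ with $f_\varepsilon(x_\varepsilon)=x_\varepsilon$, and therefore
$$\|x_\varepsilon-f(x_\varepsilon)\|_{\mathcal{B}}=\|P_\varepsilon\bigl(f(x_\varepsilon)\bigr)-f(x_\varepsilon)\|_{\mathcal{B}}<\varepsilon .$$
Finally I would take $\varepsilon=1/k$, $k\in\mathbb{N}$, write $x_k:=x_{1/k}\in B$, and use that $f(x_k)\in K$ with $K$ compact to extract a subsequence along which $f(x_k)\to x$ for some $x\in K\subseteq B$. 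Then $\|x_k-f(x_k)\|_{\mathcal{B}}<1/k$ forces $x_k\to x$ along the same subsequence, and continuity of $f$ gives $f(x_k)\to f(x)$; by uniqueness of limits, $f(x)=x$ with $x\in B$, as claimed. (The statement asks only for existence of a fixed point — the word ``unique'' does not appear — so no contraction or monotonicity argument is needed.)

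The main obstacle I expect is the careful verification that $P_\varepsilon$ really maps $K$ into the finite-dimensional compact convex set $B_\varepsilon\subseteq B$ while displacing each point by strictly less than $\varepsilon$; this is exactly the place where convexity of $B$, the finite subcover of $K$, and the choice of the weights $\mu_i$ must be made to cooperate. Everything else — the passage to the limit using compactness of $K$ and continuity of $f$, and the invocation of Brouwer's theorem on the finite-dimensional model of $B_\varepsilon$ — is standard.
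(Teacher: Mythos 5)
Your proof is correct. It is the classical proof of Schauder's fixed point theorem: project the compact closure $K=\overline{f(B)}$ onto a finite-dimensional convex set $B_\varepsilon$ via the Schauder projection $P_\varepsilon$, verify $\|P_\varepsilon(y)-y\|<\varepsilon$ on $K$, apply Brouwer's theorem to $P_\varepsilon\circ f$ on $B_\varepsilon$, and pass to the limit using compactness of $K$ and continuity of $f$. Each step is justified as you write it; in particular you correctly note that $f(B_\varepsilon)\subseteq f(B)\subseteq K$ so that $P_\varepsilon\circ f$ is defined on $B_\varepsilon$, and that convexity of $B$ keeps $B_\varepsilon$ inside $B$.

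Be aware, however, that the paper does not prove this lemma at all: it is quoted verbatim as a known result, attributed to Theorem~9.5 of Conway's \emph{A Course in Functional Analysis}, and used as a black box in the proof of Theorem~\ref{1.4}. So there is no ``paper's own proof'' to compare against. What you have reconstructed is the standard textbook argument (essentially Conway's), reducing an infinite-dimensional fixed-point assertion to Brouwer's theorem through the Schauder projection; this is exactly what the cited reference does, so your route is the canonical one rather than an alternative.
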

\section{THE EXISTENCE AND UNIQUENESS OF HJE}
In this section, first we prove the WMP given in Lemma \ref{WMP4}. Then we obtain the Schauder prior estimates given in Lemma \ref{1.1}. These results are used to prove the existence, uniqueness and regularities of HJE given in Theorem \ref{1.2}.
\subsection{The proof of Lemma \ref{WMP4}}
Let $V$ be a bound set of $\mathbb{R}^2$, and $V_T:=(0,T)\times V$. Denote by
\begin{equation}\label{10.14}
  \partial_p V_T:=\big\{\{t=0\}\times V\big\}\bigcup\big\{(0,T)\times\partial V\big\},
\end{equation}
 the parabolic boundary of $V_T$. Now we prove WMP with weak regularity on $V_T$.
\begin{Prop}\label{WMP2}{\rm{(WMP on $V_T$)}}
Assume $u\in C_{\mathcal {X}}^{2}(V_T)\cap C(\bar{V}_T)$, for the heat type operator $\mathcal {H}$ defined in \eqref{001} with $b_i=0$, and $c\geq0$, we have
\begin{itemize}
  \item [\rm{(i)}] If $\mathcal {H}u \leq 0$ in $V_T$, then $\max_{\bar{V}_T}u\leq\max_{\partial_p V_T}u^+$.
  \item [\rm{(ii)}] If $\mathcal {H}u \geq 0$ in $V_T$, then $\min_{\bar{V}_T}u\geq-\max_{\partial_p V_T}u^-$.
\end{itemize}
In particular, if $\mathcal {H}u =0$ in $V_T$, then  $\max_{\bar{V}_T}|u|=\max_{\partial_p V_T}|u|$.
\end{Prop}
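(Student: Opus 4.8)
The plan is to run the classical parabolic weak maximum principle argument; the only genuinely new issues are the \emph{degeneracy} of $\mathcal{H}$ and the \emph{non-commutativity} of the vector fields $X_i$, which both affect the treatment of the second order term at a maximum point. First, it suffices to prove (i): part (ii) follows by applying (i) to $-u$ (for which $\mathcal{H}(-u)=-\mathcal{H}u\le 0$ and $(-u)^+=u^-$), and the final assertion follows by combining (i), (ii) with the trivial inequality $\max_{\partial_pV_T}|u|\le\max_{\bar V_T}|u|$ (valid since $\partial_pV_T\subseteq\bar V_T$).

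\textbf{Reduction to a strict subsolution.} For $\varepsilon>0$ put $w_\varepsilon(t,x):=u(t,x)-\varepsilon t$. Since $b_i=0$, $c\ge 0$ and $t\ge 0$ on $V_T$, we have $\mathcal{H}(\varepsilon t)=\varepsilon(1+ct)\ge\varepsilon>0$, so $\mathcal{H}w_\varepsilon=\mathcal{H}u-\varepsilon(1+ct)\le-\varepsilon<0$ strictly in $V_T$. If one shows $\max_{\bar V_T}w_\varepsilon\le\max_{\partial_pV_T}w_\varepsilon^+$, then, using $w_\varepsilon\le u\le w_\varepsilon+\varepsilon T$ and $w_\varepsilon^+\le u^+$, we get $\max_{\bar V_T}u\le\max_{\partial_pV_T}u^++\varepsilon T$; letting $\varepsilon\to 0$ gives (i).

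\textbf{Locating and analyzing the maximum.} Since $w_\varepsilon\in C(\bar V_T)$ and $\bar V_T$ is compact, $w_\varepsilon$ attains its maximum at some $(t_0,x_0)$. By continuity of $w_\varepsilon$ up to the boundary, any maximum point with $t_0=0$ or $x_0\in\partial V$ satisfies $w_\varepsilon(t_0,x_0)\le\max_{\partial_pV_T}w_\varepsilon$, and if $w_\varepsilon(t_0,x_0)\le 0$ there is nothing to prove; so we may assume $t_0\in(0,T]$, $x_0\in V$ and $w_\varepsilon(t_0,x_0)>0$ (for the endpoint $t_0=T$ one first works on $\bar V_{T'}$ with $T'<T$, where $\partial_t w_\varepsilon$ is available and $\partial_p V_{T'}\subseteq\partial_pV_T$, then lets $T'\to T$). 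At $(t_0,x_0)$ one has $\partial_tw_\varepsilon\ge 0$ and $cw_\varepsilon\ge 0$. For the second order term, fix $\lambda=(\lambda_1,\dots,\lambda_q)\in\mathbb{R}^q$ and let $\gamma$ solve $\gamma'(s)=\sum_i\lambda_iX_i(\gamma(s))$, $\gamma(0)=x_0$; it remains in $V$ for $|s|$ small, and $g(s):=w_\varepsilon(t_0,\gamma(s))$ is $C^2$ with $g''(0)=\sum_{i,j}\lambda_i\lambda_j\,(X_iX_jw_\varepsilon)(t_0,x_0)$. Since $s=0$ is a local maximum of $g$, $g''(0)\le 0$; hence the symmetrized H\"{o}rmander Hessian $\big[\tfrac12(X_iX_j+X_jX_i)w_\varepsilon\big](t_0,x_0)$ is negative semidefinite, and since $A=(a_{ij})$ is symmetric and uniformly positive definite we get $\sum_{i,j}a_{ij}(X_iX_jw_\varepsilon)(t_0,x_0)=\operatorname{tr}\!\big(A\cdot\tfrac12(X_iX_j+X_jX_i)w_\varepsilon\big)\le 0$. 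Therefore $\mathcal{H}w_\varepsilon(t_0,x_0)=\partial_tw_\varepsilon-\sum_{i,j}a_{ij}X_iX_jw_\varepsilon+cw_\varepsilon\ge 0$, contradicting $\mathcal{H}w_\varepsilon<0$. Consequently $\max_{\bar V_T}w_\varepsilon\le\max_{\partial_pV_T}w_\varepsilon^+$, which by the previous step yields (i); (ii) and the last assertion follow as indicated.

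\textbf{Main obstacle.} The delicate point is the second order term at the maximum: because $\mathcal{H}$ is degenerate one cannot invoke the Euclidean Hessian of $u$, and because the $X_i$ do not commute the matrix $\big(X_iX_ju\big)$ need not be symmetric, so it cannot a priori be paired with $A$. The integral-curve computation above bypasses both difficulties at once, reducing everything to the negative semidefiniteness of the \emph{symmetrized} matrix, after which the symmetry and positivity of $A$ close the estimate; the hypothesis $u\in C^2_{\mathcal{X}}(V_T)$ is exactly what guarantees $g\in C^2$ along these curves.
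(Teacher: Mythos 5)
Your proof is correct and follows essentially the same route as the paper's: perturb to a strict subsolution $u-\varepsilon t$, derive a contradiction at an interior positive maximum where $\partial_t u\ge 0$, $cu\ge 0$ and $-\sum_{i,j}a_{ij}X_iX_ju\ge 0$, and deduce (ii) by applying (i) to $-u$. The only difference is that where the paper cites the proof of Theorem 4.1 of \cite{heat} for the sign of the second-order term at the maximum, you prove it directly via integral curves of $\sum_i\lambda_iX_i$ together with the symmetry and positivity of $A$ — a self-contained rendering of the same standard argument.
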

\begin{proof}
First, we prove assertion {\rm{(i)}}. In the case of
\begin{equation}\label{7.22}
  \mathcal {H}u<0,\ \text{in}\ V_T,
\end{equation}
assume that there exists a point $(t_0,x_0)\in V_T$  with
$u(t_0,x_0)=\max_{\bar{V}_T}u.$
Now at this maximum point $(t_0,x_0)$, by the proof of Theorem 4.1 in \cite{heat}, we have
\begin{equation}\label{7.1}
 -\sum^{q}_{i,j=1}a_{ij}(t_0,x_0)X_iX_ju(t_0,x_0)\geq0.
\end{equation}
Since $c(t_0,x_0)u(t_0,x_0)\geq0$, and $\partial_t u(t_0,x_0)\geq0$, we have
$$\partial_t u(t_0,x_0)-\sum^{q}_{i,j=1}a_{ij}(t_0,x_0)X_iX_ju(t_0,x_0)+c(t_0,x_0)u(t_0,x_0)\geq0.$$
So \eqref{7.22} is incompatible, and we have a contradiction at $(t_0,x_0)$.

Then, in the general case that $\mathcal {H}u\leq0$ in $V_T$. Let $u^{\varepsilon}(t,x)=u(t,x)-\varepsilon t$, we get
$$\mathcal {H}u^{\varepsilon}(t,x)=\mathcal {H}u(t,x)-\varepsilon(1+ct)<0,\ \text{in}\ V_T.$$
Furthermore if $u$ attains a positive maximum at some point in $V_T$, then $u^{\varepsilon}$ also attains a positive maximum at some point belonging to $V_T$, provided $\varepsilon>0$ is small enough. But then, as in the previous proof, we obtain a contradiction. Hence assertion {\rm{(i)}} is valid. Since $-u$ is a subsolution whenever $u$ is a supersolution, assertion {\rm{(ii)}} follows. Combined with assertions {\rm{(i)}} and {\rm{(ii)}}, the result follows.
\end{proof}




 Next, we extend Proposition \ref{WMP2} from $V_T$ to $U_{T}$ with bounded $c(t,x)$.
\begin{proof}[\bf{Proof of Lemma \ref{WMP4}}]
The main idea of the proof is inspired by Theorem 6 in Chapter 2 of \cite{Evans}. It is not restrictive to assume that $c(t,x)\geq0$. If not the case, we set $\tilde{u}(t,x)=e^{-\lambda t}u(t,x)$, and $\tilde{u}$ satisfies \eqref{12} with $\tilde{c}(t,x)=\lambda+c(t,x)\geq0$, provided $\lambda$ is suitably chosen.
Now we assume
\begin{equation}\label{25}
4aT<1,
\end{equation}
in which case $4a(T+\varepsilon)<1$, for some $\varepsilon>0$. Fixed $y\in \mathbb{R}^2$, $\mu>0$,  we define
\begin{equation}\label{35}
 v(t,x):=u(t,x)-\frac{\mu}{(T+\varepsilon-t)^{\frac{Q}{2}}}e^{\frac{|x-y|^2_{\mathcal {X}}}{4(T+\varepsilon-t)}},
 \end{equation}
for any $(t,x)\in U_T$, where $Q$ is the homogenous dimension given in \eqref{2.24}, and $|\cdot|_{\mathcal {X}}$ is a weighted distance, denoted as follows. For any $x,y\in \mathbb{R}^2$, we define
$$|x-y|^2_{\mathcal {X}}:=(x_1-y_1)^2+\frac{(x_2-y_2)^2}{\|h\|^2_{L^{\infty}(\mathbb{R})}}.$$
A direct calculation shows for any $(t,x)\in U_T$,
\begin{align*}
  &\quad\partial_t v-\Delta_{\mathcal {X}}v+cv\\
  &=\partial_t u-\Delta_{\mathcal {X}}u+cu-\frac{\mu e^{\frac{|x-y|^2_{\mathcal {X}}}{4(T+\varepsilon-t)}}}{2(T+\varepsilon-t)^{\frac{Q}{2}+1}}  \left(Q-1-\frac{h^2(x_1)}{\|h\|^2_{L^{\infty}(\mathbb{R})}}+2c(T+\varepsilon-t)\right)\\
    &\quad-\frac{\mu e^{\frac{|x-y|^2_{\mathcal {X}}}{4(T+\varepsilon-t)}} }{4(T+\varepsilon-t)^{\frac{Q}{2}+2}} \left(1-\frac{h^2(x_1)}{\|h\|^2_{L^{\infty}(\mathbb{R})}}\right)\frac{(x_2-y_2)^2}{\|h\|^2_{L^{\infty}(\mathbb{R})}}\leq0,
  \end{align*}
which implies that $v$ is a subsolution on $U_T$.
Fixed $R>0$, and set $Q_T:=(0,T)\times B_R(y)$, the boundary $\partial_p Q_T$ defined in \eqref{10.14}.
According to Proposition \ref{WMP2}, we get
\begin{equation}\label{27}
  \max_{\bar{Q}_T}v\leq\max_{\partial_p Q_T}v^+.
\end{equation}
For any $x\in B_R(y)$, we have
\begin{equation}\label{28}
  v(0,x)=u(0,x)-\frac{\mu}{(T+\varepsilon)^{\frac{Q}{2}}}e^{\frac{|x-y|^2_{\mathcal {X}}}{4(T+\varepsilon)}}\leq u(0,x)=g(x).
\end{equation}
For any $(t,x)\in(0,T)\times\partial B_R(y)$, i.e. $d_{C}(x,y)=R$. Combined with the growth estimate \eqref{2.11}, we get
\begin{align*}
v(t,x)
   &\leq Ce^{a(d_{C}(x))^2}-\frac{\mu}{(T+\varepsilon-t)^{\frac{Q}{2}}}e^{\frac{R^2}{4(T+\varepsilon-t)}} \\
   &\leq Ce^{a(d_{C}(y)+R)^2}-\frac{\mu}{(T+\varepsilon)^{\frac{Q}{2}}}e^{\frac{R^2}{4(T+\varepsilon)}}.
 \end{align*}
Set $\gamma=\frac{1}{4(T+\varepsilon)}-a>0$. Thus we find
\begin{equation}\label{29}
  v(t,x)\leq Ce^{a(d_{C}(y)+R)^2}-\mu(4(a+\gamma))^{\frac{Q}{2}}e^{(a+\gamma)R^{2}}\leq\sup_{\mathbb{R}^2} g^+,
\end{equation}
for $R$ selected sufficiently large. Thus inequalities \eqref{27}-\eqref{29} imply
$$\max_{\bar{Q}_T}v\leq\max_{\partial_p Q_T}v^+\leq\sup_{\mathbb{R}^2} g^+,$$
for all $y\in\mathbb{R}^2$, $0\leq t\leq T$. Furthermore, let $\mu\rightarrow0$, we have
$$u(t,x)\leq \sup_{\mathbb{R}^2} g^+.$$

In the general case, when \eqref{25} fails, we repeatedly apply the result above on the time intervals $[0,T_1]$, $[T_1,T_2]$, ect, for $T_1=\frac{1}{8a}$. Then the result follows.
\end{proof}
By Lemma \ref{WMP4}, we can obtain the following comparison principle.
\begin{Prop}\label{CP}{\rm{(Comparison Principle)}}
Given vector fields $\mathcal {X}$ defined as \eqref{eq1.60}, for the operator $\mathcal {H}$ is given by \eqref{001} with
$A=\begin{pmatrix}
1&0\\0&1
\end{pmatrix}, $
$b_i(t,x)=0$ and $c(t,x)$ is bounded. If $v,w\in C_{\mathcal {X}}^2(U_{T})$, and
$$\mathcal {H}v(t,x)\leq\mathcal {H}w(t,x),\ \text{and}\ \ v(0,x)\leq w(0,x),$$
then $v(t,x)\leq w(t,x)$, for all $(t,x)\in U_{T}$.
\end{Prop}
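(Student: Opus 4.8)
The plan is to reduce the statement to an application of Lemma \ref{WMP4} by passing to the difference $z := v - w$. Setting $z := v - w \in C_{\mathcal{X}}^2(U_T)$, linearity of $\mathcal{H}$ (which holds because $A$ is constant, $b_i = 0$, and $c$ enters linearly) gives
\[
\mathcal{H} z = \mathcal{H} v - \mathcal{H} w \leq 0 \quad \text{in } U_T,
\qquad z(0,x) = v(0,x) - w(0,x) \leq 0 \quad \text{in } \mathbb{R}^2 .
\]
So $z$ is a subsolution of $\partial_t z - \Delta_{\mathcal{X}} z + c z \leq 0$ with nonpositive initial data $g := z(0,\cdot) \leq 0$, hence $g^+ \equiv 0$. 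If we can legitimately invoke Lemma \ref{WMP4} for $z$, we conclude $\sup_{\bar{U}_T} z \leq \sup_{\mathbb{R}^2} g^+ = 0$, i.e.\ $v \leq w$ on $U_T$, which is the claim.

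The one hypothesis of Lemma \ref{WMP4} that is not automatic is the growth bound \eqref{2.11}: we need $z(t,x) \leq C e^{a (d_C(x))^2}$ for some $C, a > 0$. Here I would simply add this as a standing assumption in the statement, or note that in all applications of Proposition \ref{CP} in this paper the functions $v, w$ lie in $C^{2,\alpha}_{\mathcal{X}}(\bar U_T)$ (indeed they are solutions or sub/supersolutions produced by the Schauder theory of Lemma \ref{1.1} and Theorem \ref{1.2}), so $v, w \in L^\infty(U_T)$ and the growth estimate holds trivially with $a$ arbitrarily small. The boundedness of $c(t,x)$ is exactly the hypothesis required by Lemma \ref{WMP4}, so no further work is needed there. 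The only mild subtlety is that Lemma \ref{WMP4} is stated for $u \in C^2_{\mathcal{X}}(U_T) \cap C(\bar U_T)$; since $v, w$ are assumed in $C^2_{\mathcal{X}}(U_T)$ and (in practice) continuous up to $\bar U_T$, $z$ meets this regularity requirement.

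The main obstacle is therefore not a deep one: it is purely the need to control the growth of $z$ at spatial infinity so that Lemma \ref{WMP4} applies — without some such restriction the comparison principle is false on the unbounded domain $\mathbb{R}^2$ (the degenerate heat equation admits nontrivial solutions growing faster than $e^{c|x|^2}$ with zero initial data, by analogy with the Tychonoff examples in the uniformly parabolic case). So the honest proof is: take $z = v - w$, verify $\mathcal{H} z \le 0$ and $z(0,\cdot) \le 0$, observe that the $C^2_{\mathcal{X}}$-hypotheses combined with the (implicit) boundedness of $v,w$ give \eqref{2.11}, apply Lemma \ref{WMP4} to get $\sup_{\bar U_T} z \le 0$, and conclude. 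I would write this out in three or four lines.
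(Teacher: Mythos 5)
Your proof is exactly the paper's: the authors' entire argument is ``take $u=v-w$ in Lemma \ref{WMP4}'', and you do the same with $z=v-w$. Your additional observation that the growth hypothesis \eqref{2.11} of Lemma \ref{WMP4} is not among the stated hypotheses of Proposition \ref{CP} --- and must be supplied either as an extra assumption or via boundedness of $v,w$ in the actual applications --- is a legitimate point that the paper itself silently glosses over, so your version is if anything more careful than the original.
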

\begin{proof}
Take $u=v-w$ in Lemma \ref{WMP4}, then we have $u\leq 0$ in $U_{T}$, and the result is completed.
\end{proof}
By properly selecting the functions in Proposition \ref{CP}, we can obtain the prior estimate of the solution.
\begin{Prop}\label{PE}{\rm{(Prior Estimate)}}
Under the same assumptions of Proposition  \ref{CP}, if $u\in C_{\mathcal {X}}^2(U_{T})$, and $\mathcal {H}u=f$ in $U_{T}$, then
\begin{equation}\label{3.01}
\sup_{U_T}|u|\leq\sup_{x\in\mathbb{R}^2}|u(0,x)|+T\sup_{U_T}|f|.
\end{equation}
\end{Prop}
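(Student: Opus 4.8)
The plan is to deduce the prior estimate directly from the Comparison Principle (Proposition \ref{CP}) by sandwiching $u$ between two explicitly chosen barrier functions. Set $M := \sup_{x \in \mathbb{R}^2} |u(0,x)|$ and $N := \sup_{U_T} |f|$; note that if either of these is infinite there is nothing to prove, so we may assume both are finite. The natural barriers are the affine-in-time functions
\begin{equation*}
w^{\pm}(t,x) := \pm\big(M + Nt\big),
\end{equation*}
which are smooth, hence in $C_{\mathcal{X}}^2(U_T)$, and for which $X_i w^{\pm} = 0$ and $\Delta_{\mathcal{X}} w^{\pm} = 0$.

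First I would compute $\mathcal{H} w^{+}$. Since $A$ is the identity, $b_i = 0$, and $\partial_t w^{+} = N$, we get $\mathcal{H} w^{+} = N + c(t,x)\big(M + Nt\big)$. To guarantee this dominates $f = \mathcal{H} u$ we need $c \geq 0$; as in the proof of Lemma \ref{WMP4}, this is not restrictive, because replacing $u$ by $e^{-\lambda t} u$ (and $f$, $c$ accordingly) reduces to the case $c(t,x) \geq 0$ with the bounds $M$, $N$ only inflated by a factor depending on $T$ and $\|c\|_{L^\infty}$ — although a cleaner route, which I would actually follow, is to observe that the argument of Lemma \ref{WMP4} already handles bounded $c$ of either sign, so I can instead invoke Lemma \ref{WMP4} directly on $u - w^{+}$ rather than Proposition \ref{CP}. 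Concretely, with $c \geq 0$ one has $\mathcal{H} w^{+} = N + c(M+Nt) \geq N \geq f = \mathcal{H} u$ on $U_T$, and $w^{+}(0,x) = M \geq |u(0,x)| \geq u(0,x)$ on $\mathbb{R}^2$. Proposition \ref{CP} then yields $u(t,x) \leq w^{+}(t,x) = M + Nt \leq M + NT$ for all $(t,x) \in U_T$.

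Symmetrically, applying the same reasoning to $w^{-}$ (or to $-u$ and $w^{+}$, using that $-u$ solves $\mathcal{H}(-u) = -f$ with $\sup|-f| = N$ and $\sup_x|(-u)(0,x)| = M$) gives $u(t,x) \geq -(M + NT)$. Combining the two bounds,
\begin{equation*}
\sup_{U_T} |u| \leq M + NT = \sup_{x \in \mathbb{R}^2} |u(0,x)| + T \sup_{U_T} |f|,
\end{equation*}
which is \eqref{3.01}.

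The only genuine subtlety — and the step I would flag as the main obstacle — is the sign of $c$: the clean inequality $\mathcal{H} w^{+} \geq N$ needs $c \geq 0$, whereas the hypothesis only gives $c$ bounded. I expect the resolution to be routine given what is already in the paper: Lemma \ref{WMP4} is proved for bounded $c$ of arbitrary sign (its proof begins by reducing to $c \geq 0$ via the substitution $\tilde u = e^{-\lambda t} u$), and Proposition \ref{CP} inherits this. So the barriers $w^{\pm}$ combined with the comparison principle go through verbatim; no new estimate is needed, and the growth condition \eqref{2.11} is satisfied trivially by the affine barriers, so there is no hidden compatibility issue at infinity.
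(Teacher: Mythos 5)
Your proof is correct and follows essentially the same route as the paper: the paper's proof uses exactly the barrier $w(t,x)=\sup_{x}|u(0,x)|+t\sup_{U_T}|f|$, compares $u$ with $\pm w$ via Proposition \ref{CP}, and likewise reduces to $c\geq 0$ ``as in the proof of Lemma \ref{WMP4}.'' The sign issue for $c$ that you flag is handled no more carefully in the paper than in your proposal (indeed your remark that the exponential substitution inflates the constant is a point the paper glosses over), so there is nothing substantive to add.
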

\begin{proof}
As the proof of Lemma \ref{WMP4}, it is enough to prove for $c(t,x)\geq0$, and bounded $f$. Set
$$w(t,x)=\sup_{x\in\mathbb{R}^2}|u(0,x)|+t\sup_{U_T}|f(t,x)|.$$
Then $w\in C^2_{\mathcal {X}}(U_{T})\cap C(\bar{U}_{T})$. For any $(t,x)\in U_{T}$, we have
$$\mathcal {H}w(t,x)=\sup_{U_T}|f(t,x)|+c(t,x)w(t,x)\geq\sup_{U_T}|f(t,x)|.$$
Similarly, $\mathcal {H}(-w(t,x))\leq - \sup_{U_T}|f(t,x)|$. Then for any $(t,x)\in U_T, $ we have
$$\mathcal {H}(-w(t,x))\leq\mathcal {H}u(t,x)\leq\mathcal {H}w(t,x).$$
Consider that for any $x\in \mathbb{R}^2$,
$$-w(0,x)\leq u(0,x)\leq w(0,x),$$
the inequality \eqref{3.01} is valid by the Proposition \ref{CP}.
\end{proof}

\subsection{The proof of Lemma \ref{1.1}}
In this subsection, we shall prove Lemma \ref{1.1} for the degenerate operator $\mathcal {H}$ defined in \eqref{001}. The proof of the global Schauder estimates is a blend of the localizing technique used by Theorem 8.11.1 in \cite{9}, and the method of freezing the coefficients.

Without loss of generality, assume that $\mathcal {N}(h)=\{0\}$. Set
\begin{equation}\label{Z}
  \mathcal {Z}:=\{x=(x_1,x_2)\in\mathbb{R}^2|\ x_1=0,x_2\in\mathbb{R}\}.
\end{equation}
For any $x_0\in\mathcal {Z}$, set $$U_{R}^{0}=U_{R}^{0}(x_0)=(0,R^2)\times B_{R}(x_0).$$
 Denote  $\nabla_{\mathcal {X}}^2$ the Hessian matrix, $\nabla_{\mathcal {X}}^k$  $k$-tensor product, $D^{k}_{\mathcal {X}}:=X^{I}\  \text{with} \ |I|=k.$
Denote $\varrho$ the radios of the convergence of the function $h$ at $x_1=0$.
By Taylor expansion for any $x_1\in (-\varrho,\varrho)$, we have
$$h(x_1)=x_1^\kappa\left(\frac{h^{(\kappa)}(0)}{\kappa!}+o(x_1)\right)=:x_1^\kappa\varsigma(x_1).$$
Then $\varsigma(x_1)\neq0$, and $\varsigma(x_1)\in C^{\infty}(\mathbb{R})$ by assumptions {\bf{(H1)}} and {\bf{(H2)}}. Furthermore, the operator $\Delta_{\mathcal {X}}$ can be rewritten as
$$\Delta_{\mathcal {X}}=\partial^2_{x_1}+\varsigma^2(x_1)(x_1^\kappa\partial_{x_2})^2.$$
Regarding $\varsigma(x_1)$ as the variable coefficient, we first consider the following equation
\begin{equation}\label{7.2}
\left\{
  \begin{array}{ll}
    \partial_t u-\Delta_{\mathcal {Y}}u=f,\quad (t,x)\in U_{R_0}^{0},\\
     u(0,x)=0, \quad\qquad x\in B_{R_0},
  \end{array}
\right.
\end{equation}
 for any $0<R_0<\varrho$, $x_0\in\mathcal {Z}$, where the vector fields
\begin{equation}\label{8.13}
 \mathcal {Y}:=\{Y_1,Y_2\},\ \text{with}\ Y_1:=\partial_{x_1}, Y_2:=x_1^\kappa\partial_{x_2}.
\end{equation}

\begin{Rem}\label{10.16}
Note that the operator $\partial_t-\Delta_{\mathcal {Y}}$ is $2$-homogeneous and hypoelliptic, by H\"{o}rmander theorem. Moreover,
if $u$ is a solution of the equation \eqref{7.2} with $f=0$,
then {\bf{(H2)}} ensures the homogeneity of $u$ with index $2$, it implies that
 \begin{equation}\label{ho1}
  u(t,x)=u(t,x_1,x_2)=\lambda^{-2}u(\lambda^2t,\lambda x_1,\lambda^{\kappa+1}x_2).
\end{equation}
 By the way, the relation \eqref{ho1} also implies \eqref{2.24}. More details for H\"{o}rmander operators on homogeneous groups see in \cite{heat} and \cite{Bra}.
\end{Rem}

First, we show the Caccioppoli-type inequality on weighted Sobolev spaces.
\begin{Prop}\label{7.3.1}{\rm{(Caccioppoli-type inequality)}}
Let $u\in H^{1}_{\mathcal {Y}}(U_{T})$ be a weak solution of \eqref{7.2} with $f=0$, and $0<R\leq R_0<\varrho$. For any $x_0\in\mathcal {Z}$, such that $U_{R}^{0}\subset U_T$, and for any integer $k>0$, we have
\begin{equation}\label{03.04}
 \sup_{t\in\big(0,\frac{R^2}{2^{2k}}\big)}\int_{B_{{R}/{2^{k}}}}\big|\nabla_{\mathcal {Y}}^{k-1}u\big|^2dx+\|u\|^2_{H_{\mathcal {Y}}^k\big({U_{R/2^k}^{0}}\big)}\leq \frac{C}{R^{2k}}\|u\|^2_{L^2(U^0_{R})}.
\end{equation}
Furthermore, we have
\begin{equation}\label{03.4}
 \iint_{U_{R/2^{k}}^{0}}\big|\nabla_{\mathcal {Y}}^{k}u\big|^2dxdt\leq \frac{C}{R^{2k}}\iint_{U_{R}^{0}}u^2dxdt.
\end{equation}
\end{Prop}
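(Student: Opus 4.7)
I will prove the estimate \eqref{03.04} by induction on $k$; once that is in hand, \eqref{03.4} follows immediately by dropping the supremum-in-time term and retaining only the contribution of $|\nabla_{\mathcal{Y}}^k u|^2$ inside the $H^k_{\mathcal{Y}}$-norm. Since $\partial_t - \Delta_{\mathcal{Y}}$ is hypoelliptic (as recorded in Remark \ref{10.16}), any weak $H^1_{\mathcal{Y}}$ solution is actually smooth in the interior of $U_R^0$, so throughout the induction I may differentiate $u$ classically. For the base case $k = 1$, pick a smooth cutoff $\eta(t,x)$ supported in $U_R^0$, identically $1$ on $U_{R/2}^0$, satisfying $|\nabla_{\mathcal{Y}} \eta| \leq C/R$ and $|\partial_t \eta| \leq C/R^2$; such cutoffs exist because the parabolic scaling of $d_P$ matches the natural scaling of $\mathcal{Y}$. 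Testing $\partial_t u - \Delta_{\mathcal{Y}} u = 0$ against $\eta^2 u$, integrating over $(0,\tau) \times B_R(x_0)$, rewriting $\eta^2 u \partial_t u = \tfrac{1}{2} \partial_t(\eta^2 u^2) - \eta u^2 \partial_t \eta$, and using $u(0,\cdot) = 0$ to kill the initial boundary term yields, after one application of Young's inequality to absorb the mixed gradient term,
\[
\sup_{\tau \in (0,R^2)} \int_{B_R(x_0)} \eta^2(\tau)\, u^2(\tau)\, dx + \iint_{U_R^0} \eta^2 |\nabla_{\mathcal{Y}} u|^2\, dx\, dt \leq \frac{C}{R^2} \iint_{U_R^0} u^2\, dx\, dt,
\]
which is precisely \eqref{03.04} for $k = 1$ after restriction to $U_{R/2}^0$.

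\textbf{Inductive step.} For $k \geq 2$, assume the estimate for $k - 1$ on the intermediate cylinder $U_{R/2^{k-1}}^0$. I apply the same Caccioppoli argument (with a new cutoff adapted to the nested pair $U_{R/2^{k-1}}^0 \supset U_{R/2^k}^0$) to $v := D u$ for $D$ ranging over $\{Y_1,\, Y_2,\, \partial_t\}$. Since $\partial_t$ commutes with the full operator, $\partial_t u$ is again a solution and the base-case bound applies verbatim. For $D = Y_i$, the function $v$ satisfies the inhomogeneous equation $(\partial_t - \Delta_{\mathcal{Y}}) v = [Y_i, \Delta_{\mathcal{Y}}] u$, and the inhomogeneous version of Caccioppoli controls $\|\nabla_{\mathcal{Y}} v\|_{L^2(U_{R/2^k}^0)}^2$ by $R^{-2} \|v\|_{L^2(U_{R/2^{k-1}}^0)}^2$ plus an $L^2$-term coming from the commutator right-hand side on the intermediate cylinder, both of which are absorbed by the inductive hypothesis. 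Accumulating one factor of $R^{-2}$ per step delivers the final $R^{-2k}$, while pure time-derivatives of higher order appearing in the $H^k_{\mathcal{Y}}$-norm are reduced to $\mathcal{Y}$-derivatives through the PDE itself, since $\partial_t u = \Delta_{\mathcal{Y}} u$.

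\textbf{Main obstacle.} The chief difficulty is the non-commutativity of the vector fields: one computes $[Y_1, Y_2] = \kappa\, x_1^{\kappa-1} \partial_{x_2}$, so the commutator $[Y_i, \Delta_{\mathcal{Y}}] u$ contains $\partial_{x_2}$-derivatives of $u$ in the ``missing'' direction of weight $\kappa + 1$, multiplied by powers of $x_1$. These must be re-expressed through pure $\mathcal{Y}$-derivatives before the induction can close. The algebraic identity $Y_2^j u = x_1^{j\kappa} \partial_{x_2}^j u$ enables such a re-expression, but at the price of negative powers of $x_1$, which cannot be bounded pointwise since the cylinders $U_R^0$ are centered on $\mathcal{Z} = \{x_1 = 0\}$. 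The resolution is to structure the bookkeeping so that each singular factor $x_1^{-1}$ is paired with a compensating factor $x_1$ extracted from an extra $Y_1$-derivative, making every arising term $L^2$-integrable and controlled by a lower-order $\mathcal{Y}$-derivative already estimated at the previous stage. Combined with the shrinking sequence $\{U_{R/2^j}^0\}_{j=0}^k$ and the accumulated $R^{-2}$ factors, this closes the induction and establishes \eqref{03.04}; then \eqref{03.4} is immediate.
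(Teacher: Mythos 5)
Your base case and the final reduction of \eqref{03.4} from \eqref{03.04} are fine and match the paper. The inductive step, however, has a genuine gap exactly at the point you flag as the ``main obstacle.'' Differentiating the equation and writing $(\partial_t-\Delta_{\mathcal Y})Y_iu=[\,Y_i,\Delta_{\mathcal Y}]u$ produces, e.g., $[\,Y_1,\Delta_{\mathcal Y}]u=2\kappa\,x_1^{2\kappa-1}\partial_{x_2}^2u=2\kappa\,x_1^{-1}Y_2^2u$. The factor $x_1^{-1}$ is not square-integrable across $\{x_1=0\}$, and the cylinders $U_R^0$ are centered precisely on that set; $\partial_{x_2}$ is a commutator of weight $\kappa+1$, so an $L^2$ bound on $\partial_{x_2}^2u$ costs $2(\kappa+1)$ horizontal derivatives, not two. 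Your proposed fix --- pairing each $x_1^{-1}$ with ``a compensating factor $x_1$ extracted from an extra $Y_1$-derivative'' --- is not an argument: that extraction would require a Hardy-type inequality, which needs $Y_2^2u$ to vanish on $\{x_1=0\}$, and there is no reason it does. As written, the induction does not close.

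The paper avoids commuting derivatives through the operator altogether. It applies $D_{\mathcal Y}^{k-1}$ to the equation but keeps the composition in the order $\partial_tD_{\mathcal Y}^{k-1}u=D_{\mathcal Y}^{k-1}\Delta_{\mathcal Y}u$, tests against $\eta^2D_{\mathcal Y}^{k-1}u$ with a purely spatial cutoff, and integrates by parts once to move a derivative onto the test function; this yields the $\sup_t$ term bounded by $\|\eta u\|^2_{H^k_{\mathcal Y}}+R^{-2}\|u\|^2_{H^{k-1}_{\mathcal Y}}$ with no singular weights. The missing key lemma in your proposal is the Rothschild--Stein maximal (subelliptic) regularity estimate for $L=\partial_t-\Delta_{\mathcal Y}$,
\begin{equation*}
\|\eta u\|_{H^{k}_{\mathcal Y}}\le C\|L(\eta u)\|_{H^{k-2}_{\mathcal Y}}+C\|\eta u\|_{L^2},
\end{equation*}
combined with the observation that, since $Lu=0$ and $\partial_t\eta\equiv 0$, one has $L(\eta u)=-\sum_{j}\bigl(u\,Y_j^2\eta+2Y_ju\,Y_j\eta\bigr)$, which involves only derivatives of $u$ of order at most one, with coefficients of size $R^{-1}$ and $R^{-2}$. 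The induction hypothesis then controls $\|L(\eta u)\|_{H^{k-2}_{\mathcal Y}}$ by $CR^{-k}\|u\|_{L^2(U_R^0)}$ and the argument closes. Without importing this (or an equivalent subelliptic a priori estimate that sees all of $\nabla^k_{\mathcal Y}u$, including the mixed and missing-direction derivatives, from $Lu$), your scheme cannot recover the full $H^k_{\mathcal Y}$-norm on the left-hand side of \eqref{03.04}.
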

\begin{proof}
The idea of the proof is inspired of Proposition 3.1 in \cite{Xu-56}. The function $u$ is $C^\infty$ since $\mathcal {H}$ is hypoelliptic. We shall  write $B_R$ instead of $B_{R}(x_0)$, and prove \eqref{03.04} by induction on $k$. First, we give the proof of \eqref{03.04} with $k=1$. Let the cutoff function $\eta\in C^{\infty}_{0}(B_{R})$, satisfying
\begin{equation}\label{7.022}
  0\leq\eta\leq1,\ \eta\equiv1\ \text{on}\ B_{{R}/{2}},\ \text{and}\ |\nabla_{\mathcal {Y}}\eta|\leq CR^{-1}.
\end{equation}
 Multiply both sides of equation \eqref{7.2} by $\eta^2u$, for all $s\in\big(0,\frac{R^2}{4}\big)$, and integrate on $(0,s)\times B_{R}$.  Using integration by parts, we get
\begin{align}\label{3.10}
  \int_0^s\int_{B_{R}}\eta^2u\partial_t udxdt &=\int_0^s\int_{B_{R}}\eta^2u\Delta_{\mathcal {Y}}udxdt\\\nonumber
  &=-\int_0^s\int_{B_{R}}\eta^2|\nabla_{\mathcal {Y}}u|^2+2\eta u\nabla_{\mathcal {Y}}u \nabla_{\mathcal {Y}}\eta dxdt.
\end{align}
For the left hand of \eqref{3.10}, since $u\big|_{t=0}=0$, we have
\begin{equation}\label{3.11}
  \int_0^s\int_{B_{R}}\eta^2u\partial_t udxdt=\frac{1}{2}\int_{B_{R}}(\eta u)^2\bigg|_{t=s}dx.
\end{equation}
It follows from \eqref{3.10}, \eqref{3.11}, the $\varepsilon$-Cauchy inequality and \eqref{7.022} that
\begin{align}\label{8.18}
&\quad\frac{1}{2}\int_{B_{R}}(\eta u)^2\bigg|_{t=s}dx+\int_0^s\int_{B_{R}}\eta^2|\nabla_{\mathcal {Y}}u|^2dxdt\\\nonumber
&=2\int_0^s\int_{B_{R}}(\eta\nabla_{\mathcal {Y}}u) (u \nabla_{\mathcal {Y}}\eta) dxdt\\\nonumber
   & \leq\varepsilon \int_0^s\int_{B_{R}}\eta^2|\nabla_{\mathcal {Y}}u|^2dxdt+C_{\varepsilon}\int_0^s\int_{B_{{R}}}|\nabla_{\mathcal {Y}}\eta|^2 u^2 dxdt\\\nonumber
  &\leq \varepsilon \int_0^s\int_{B_{R}}\eta^2|\nabla_{\mathcal {Y}}u|^2dxdt+\frac{C_{\varepsilon}}{R^{2}}\int_0^s\int_{B_{{R}}}u^2dxdt,
\end{align}
for any $s\in\big(0,\frac{{R}^2}{4}\big)$. Taking $\varepsilon$ small enough, it follows from \eqref{8.18} and \eqref{7.022} that
\begin{equation}\label{8.22}
 \sup_{t\in\big(0,\frac{R^2}{4}\big)}\int_{B_{{R}/{2}}}|u|^2dx+\|u\|^2_{H_{\mathcal {Y}}^1\big({U_{R/2}^{0}}\big)}\leq \frac{C}{R^{2}}\iint_{U_{R}^{0}}u^2dxdt,
\end{equation}
it implies that \eqref{03.4} is valid for $k=1$.

Assume now \eqref{03.4} true up to the order $k-1$, that is for any integer $2\leq i\leq k-1$,
\begin{equation}\label{8.17}
   \sup_{t\in\big(0,{R^2}/{2^{2i}}\big)}\int_{B_{R/2^{i}}}\big|\nabla_{\mathcal {Y}}^{i-1}u\big|^2dx+\|u\|^2_{H_{\mathcal {Y}}^i\big({U_{R/2^i}^{0}}\big)}\leq \frac{C}{R^{2i}}\iint_{U_{R}^{0}}u^2dxdt.
\end{equation}
Applying $D_{\mathcal {Y}}^{k-1}$ to the equation \eqref{7.2} with $f=0$, we have
\begin{equation}\label{3.18}
  \partial_t D_{\mathcal {Y}}^{k-1}u -D_{\mathcal {Y}}^{k-1}\Delta_{\mathcal {Y}}u=0.
\end{equation}

 Similar to \eqref{3.10}-\eqref{8.18}, multiply both sides of the equation \eqref{3.18} by $\eta^2D_{\mathcal {Y}}^{k-1}u$ and integrate on $(0,s)\times B_{{R}/{2^{k-1}}}$, where the cutoff function $\eta\in C^{\infty}_{0}(B_{{R}/{2^{k-1}}})$ satisfies
\begin{equation}\label{8.16}
  0\leq\eta\leq1,\ \eta\equiv1\ \text{on}\ B_{{{R}}/{2^k}}, \ \text{and}\ |\nabla_{\mathcal {Y}}^{k}\eta|\leq CR^{-k}.
\end{equation}
Note that $|\Delta_{\mathcal {Y}}u|\leq|\nabla^2_\mathcal {Y}u|$,
it follows from integration by parts, Cauchy inequality and \eqref{8.16} that
\begin{align}\label{3.21}
  &\quad\frac{1}{2}\int_{B_{{R}/{2^{k-1}}}}\eta^2 |D_{\mathcal {Y}}^{k-1}u|^2\bigg|_{t=s} dx\\\nonumber
   & =-\int_0^s\int_{B_{{R}/{2^{k-1}}}}\left(\eta^2D_{\mathcal {Y}}^{k}u+2\eta D_{\mathcal {Y}}\eta D_{\mathcal {Y}}^{k-1}u\right)D_{\mathcal {Y}}^{k-2}\Delta_{\mathcal {Y}}udxdt \\\nonumber
   & \leq C\int_0^s\int_{B_{{R}/{2^{k-1}}}}\eta^2|\nabla^{k}_{\mathcal {Y}}u|^2dxdt+C\int_0^s\int_{B_{{R}/{2^{k-1}}}}|D_{\mathcal {Y}}\eta|^2| \nabla^{k-1}_\mathcal {Y}u|^2dxdt\\\nonumber
   & \leq C\int_0^s\int_{B_{{R}/{2^{k-1}}}}\eta^2|\nabla^{k}_{\mathcal {Y}}u|^2dxdt+\frac{C}{R^{2}}\int_0^s\int_{B_{{R}/{2^{k-1}}}}|\nabla^{k-1}_\mathcal {Y}u|^2dxdt\\\nonumber
   &\leq C\|\eta u\|^2_{H^{k}_{\mathcal {Y}}\big(U^0_{{R}/{2^{k-1}}}\big)}+\frac{C}{R^{2}}\| u\|^2_{H^{k-1}_{\mathcal {Y}}\big(U^0_{{R}/{2^{k-1}}}\big)},
\end{align}

Furthermore, using the maximum regularity for the operator $L=\partial_t-\Delta_\mathcal {Y}$, which proved by Theorem 18 of \cite{RS}, we get
\begin{align}\label{10.04}
\|\eta u\|_{H^{k}_{\mathcal {Y}}\big(U^0_{{R}/{2^{k-1}}}\big)}\leq C\|L(\eta u)\|_{H^{k-2}_{\mathcal {Y}}\big(U^0_{{R}/{2^{k-1}}}\big)}+C\|\eta u\|_{L^2\big(U^0_{{R}/{2^{k-1}}}\big)}.
\end{align}
Since $Lu=0$, and $\partial_t\eta\equiv0$, we have
\begin{equation}\label{8.15}
L(\eta u)=-\sum^{2}_{j=1}\big(uX_j^2\eta+2X_juX_j\eta\big).
\end{equation}
 It follows from the estimates \eqref{10.04}, \eqref{8.15} and the induction \eqref{8.17} that
\begin{align}\label{8.20}
  \|L(\eta u)\|_{H^{k-2}_{\mathcal {Y}}\big(U^0_{{R}/{2^{k-1}}}\big)} 
  &\leq\frac{C}{R^{2}}\|u\|_{H^{k-2}_{\mathcal {Y}}\big(U^0_{{R}/{2^{k-1}}}\big)}+\frac{C}{R}\|u\|_{H^{k-1}_{\mathcal {Y}}\big(U^0_{{R}/{2^{k-1}}}\big)}\\\nonumber
  &\leq \frac{C}{R^{k}}\|u\|_{L^2(U^0_{R})}.
\end{align}
It follows from \eqref{10.04} and \eqref{8.20} that
\begin{equation}\label{3.16}
  \|u\|_{H^{k}_{\mathcal {Y}}\big(U^0_{{R}/{2^{k}}}\big)}\leq\|\eta u\|_{H^{k}_{\mathcal {Y}}\big(U^0_{{R}/{2^{k-1}}}\big)}\leq \frac{C}{R^k}\|u\|_{L^2(U^0_{R})}.
\end{equation}
It follows from \eqref{3.21}, \eqref{3.16} and \eqref{8.17} with $i=k-1$ that
\begin{align}\label{8.21}
 \sup_{t\in\big(0,\frac{R^2}{2^{2k}}\big)}\int_{B_{{R}/{2^{k}}}}|\nabla^{k-1}_{\mathcal {Y}}u|^2 dx &\leq C\|\eta u\|^2_{H^{k}_{\mathcal {Y}}\big(U^0_{{R}/{2^{k-1}}}\big)}+\frac{C}{R^{2}}\| u\|^2_{H^{k-1}_{\mathcal {Y}}\big(U^0_{{R}/{2^{k-1}}}\big)}\\\nonumber
 &\leq\frac{C}{R^{2k}}\|u\|^2_{L^2(U^0_{R})}.
\end{align}
Combined with \eqref{3.16} and \eqref{8.21}, we get \eqref{03.04} for $k$, and \eqref{03.4} follows.
\end{proof}
Furthermore, we extend the maximum regularity result in \cite{RS} to inhomogeneous equation \eqref{7.2}.
\begin{Cor}\label{03.2}
Let $u\in C^{\infty}(\bar{U}_{T})$ be a weak solution of equation \eqref{7.2}, then for all $x_0\in\mathcal {Z}$, such that $U_{R}^{0}\subset U_T$, we have
\begin{equation}\label{7.2.13}
  \sup_{t\in(0,\frac{R^2}{16})}\int_{B_{{R}/{4}}}|\nabla_{\mathcal {Y}}u|^2dx+\|\nabla^2_{\mathcal {Y}}u\|_{L^2(U_{{R}/{4}}^0)}
  \leq \frac{C}{R^{4}}\|u\|^2_{L^2(U^0_{R})}+C\|f\|^2_{L^2(U^0_{R})}.
\end{equation}
\end{Cor}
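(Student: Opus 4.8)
The plan is to rerun the Caccioppoli iteration behind Proposition \ref{7.3.1} in the presence of the source term $f$, and to recover the second order bound from the maximum regularity of the model operator $L=\partial_t-\Delta_{\mathcal{Y}}$ rather than purely from the energy inequality. I work on a short chain of nested parabolic cylinders $U^0_{R/4}\Subset\cdots\Subset U^0_R$ with cutoffs adapted to consecutive radii as in \eqref{7.022} and \eqref{8.16} — the precise intermediate radii being immaterial — and I use throughout that $u(0,\cdot)=0$ on $B_{R_0}$, hence $D_{\mathcal{Y}}u(0,\cdot)=0$, so all boundary terms at $t=0$ drop, together with the Grushin symmetry $Y_j^{*}=-Y_j$ that licenses the integrations by parts.

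First I would test \eqref{7.2} against $\eta^2u$ and repeat the computation \eqref{3.10}--\eqref{8.22}; the only new term is $\iint\eta^2uf$, which Cauchy's inequality absorbs into $\varepsilon\iint\eta^2u^2+C_\varepsilon\iint\eta^2f^2$, leading to the first order estimate
\begin{equation*}
\sup_{t}\int|\eta u|^2\,dx+\|\eta u\|^2_{H^1_{\mathcal{Y}}}\le\frac{C}{R^2}\|u\|^2_{L^2(U^0_R)}+C\|f\|^2_{L^2(U^0_R)}.
\end{equation*}
Next, for an inner cutoff $\zeta$ one has, exactly as in \eqref{8.15}, the identity $L(\zeta u)=\zeta f-\sum_{j=1}^{2}(uY_j^2\zeta+2Y_ju\,Y_j\zeta)$, so the case $k=2$ of \eqref{10.04} (Theorem 18 of \cite{RS}) gives
\begin{equation*}
\|\zeta u\|_{H^2_{\mathcal{Y}}}\le C\|L(\zeta u)\|_{L^2}+C\|\zeta u\|_{L^2}\le C\|f\|_{L^2(U^0_R)}+\frac{C}{R^2}\|u\|_{L^2(U^0_R)}+\frac{C}{R}\|\nabla_{\mathcal{Y}}u\|_{L^2},
\end{equation*}
and inserting the first order bound on $\|\nabla_{\mathcal{Y}}u\|_{L^2}$ controls $\|\nabla^2_{\mathcal{Y}}u\|_{L^2(U^0_{R/4})}$ by $\frac{C}{R^2}\|u\|_{L^2(U^0_R)}+C\|f\|_{L^2(U^0_R)}$.

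For the $\sup_t$ bound on the gradient I would differentiate \eqref{7.2}, getting $\partial_tD_{\mathcal{Y}}u-D_{\mathcal{Y}}\Delta_{\mathcal{Y}}u=D_{\mathcal{Y}}f$, test against $\eta^2D_{\mathcal{Y}}u$ and integrate in $(0,s)\times B$. As in \eqref{3.21}, one integration by parts turns the elliptic term into $-\iint(\eta^2D^2_{\mathcal{Y}}u+2\eta D_{\mathcal{Y}}\eta\,D_{\mathcal{Y}}u)\Delta_{\mathcal{Y}}u$, bounded using $|\Delta_{\mathcal{Y}}u|\le|\nabla^2_{\mathcal{Y}}u|$ by $C\iint\eta^2|\nabla^2_{\mathcal{Y}}u|^2+\frac{C}{R^2}\iint|\nabla_{\mathcal{Y}}u|^2$, while the source term, moved back onto the test function, equals $-\iint(\eta^2D^2_{\mathcal{Y}}u+2\eta D_{\mathcal{Y}}\eta\,D_{\mathcal{Y}}u)f$ and is bounded by $C\iint\eta^2|\nabla^2_{\mathcal{Y}}u|^2+\frac{C}{R^2}\iint|\nabla_{\mathcal{Y}}u|^2+C\|f\|^2_{L^2(U^0_R)}$. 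This produces $\sup_{t\in(0,R^2/16)}\int_{B_{R/4}}|\nabla_{\mathcal{Y}}u|^2\le C\|\nabla^2_{\mathcal{Y}}u\|^2_{L^2}+\frac{C}{R^2}\|\nabla_{\mathcal{Y}}u\|^2_{L^2}+C\|f\|^2_{L^2(U^0_R)}$; combining with the two previous estimates and absorbing lower negative powers of $R$ into $R^{-4}$ (legitimate since $R\le\varrho$) yields \eqref{7.2.13}.

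The main obstacle is the one stressed in the paper: $D_{\mathcal{Y}}$ and $\Delta_{\mathcal{Y}}$ do not commute, so $D_{\mathcal{Y}}u$ solves no equation of the original form and the Caccioppoli scheme cannot simply be iterated. The fix, as in Proposition \ref{7.3.1}, is never to commute — one leaves $D_{\mathcal{Y}}\Delta_{\mathcal{Y}}u$ intact, estimates it crudely after a single integration by parts, and buys back the missing $H^2_{\mathcal{Y}}$ norm from the maximum regularity \eqref{10.04} of the frozen, $2$-homogeneous operator $\partial_t-\Delta_{\mathcal{Y}}$. A milder point is that $f$ is only assumed in $L^2$, so every $D_{\mathcal{Y}}f$ created by differentiating the equation must be integrated by parts back onto the smooth compactly supported cutoff before being estimated; this is harmless thanks to $Y_j^{*}=-Y_j$.
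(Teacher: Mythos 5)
Your proposal is correct and follows essentially the same route as the paper: an energy estimate with test function $\eta^2u$ for the $H^1_{\mathcal{Y}}$ bound, the maximum regularity of $L=\partial_t-\Delta_{\mathcal{Y}}$ applied to $\eta u$ for the second-order term, and differentiating the equation, testing against $\eta^2D_{\mathcal{Y}}u$ and integrating the resulting $D_{\mathcal{Y}}f$ back onto the cutoff for the $\sup_t$ gradient bound. The only caveat (shared with the paper's own computation) is that the $R^{-2}\|\nabla_{\mathcal{Y}}u\|_{L^2}^2$ term actually leaves a factor $R^{-2}$ in front of $\|f\|_{L^2}^2$, which is harmless here since the corollary is only invoked at the fixed scale $R_0$.
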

\begin{proof}
 First, let $\eta$ be the cutoff function satisfying \eqref{8.16} with $k=2$, and write $B_R$ instead of $B_{R}(x_0)$. By the same argument in \eqref{7.022}-\eqref{8.22}, multiply both sides of equation \eqref{7.2} by $\eta^2u$, and integrate on $(0,s)\times B_{R}$. Similar to \eqref{8.22},  we get
\begin{equation}\label{3.30}
\|u\|^2_{H_{\mathcal {Y}}^1\big({U_{R/2}^{0}}\big)}
  \leq \frac{C}{R^{2}}\|u\|^2_{L^2(U^0_{R})}+C\|f\|^2_{L^2(U^0_{R})}.
\end{equation}
Similar to \eqref{10.04}, by the maximum regularity,  we get
\begin{equation}\label{9.09}
  \|\nabla^2_{\mathcal {Y}}u\|_{L^2(U_{{R}/{4}}^0)}\leq \|\eta u\|^2_{H^{2}_{\mathcal {Y}}(U^0_{{R}/{2}})}
\leq C\big(\|L(\eta u)\|^2_{L^2(U^0_{{R}/{2}})}+\|\eta u\|^2_{L^2(U^0_{{R}/{2}})}\big).
\end{equation}
Since $Lu=f$, and $\partial_t\eta\equiv0$, we have $L(\eta u)=-\sum^{2}_{i=1}(uX_i^2\eta+2X_iuX_i\eta)-\eta f$, and
\begin{align}\label{10.06}
\|L(\eta u)\|^2_{L^2(U^0_{{R}/{2}})}\leq C\bigg(\frac{1}{R^{4}}\|u\|^2_{L^2(U^0_{{R}/{2}})}+\frac{1}{R^{2}}\|\nabla_{\mathcal {Y}}u\|^2_{L^2(U^0_{{R}/{2}})}+\|f\|^2_{L^2(U^0_{{R}/{2}})}\bigg).
\end{align}
It follows from \eqref{3.30}-\eqref{10.06} and \eqref{7.022} that
\begin{align}\label{3.31}
  &\quad\|\nabla^2_{\mathcal {Y}}u\|_{L^2(U_{{R}/{4}}^0)}\\\nonumber
  &\leq \frac{C}{R^{4}}\|u\|^2_{L^2(U^0_{{R}/{2}})}+C\|\eta u\|^2_{L^2(U^0_{{R}/{2}})}+\frac{C}{R^{2}}\|\nabla_{\mathcal {Y}}u\|^2_{L^2(U^0_{{R}/{2}})}+C\|f\|^2_{L^2(U^0_{{R}/{2}})}\\\nonumber
  &\leq \frac{C}{R^{4}}\|u\|^2_{L^2(U_{R}^{0})}+C\| f\|^2_{L^2(U_{R}^{0})}.
\end{align}

Second, applying the operator $D_{\mathcal {Y}}$ to the equation \eqref{7.2}, then we have
\begin{equation}\label{8.23}
\partial_tD_{\mathcal {Y}}u-D_{\mathcal {Y}}\Delta_{\mathcal {Y}}u=D_{\mathcal {Y}}f.
\end{equation}
Let $\eta$ be the cutoff function satisfying \eqref{8.16}. By the same argument \eqref{3.21}-\eqref{8.21}, multiply
both sides of \eqref{8.23} by $\eta^2D_{\mathcal {Y}}u$, and integrate on $(0,s)\times B_{{R}/{2}}$. It follows from integration by parts, Cauchy inequality and the estimate \eqref{8.16} that
\begin{align}\label{3.32}
  &\quad\int_{B_{{R}/{4}}}\eta^2 |D_{\mathcal {Y}}u|^2\bigg|_{t=s} dx\\\nonumber
   & =-2\int_0^s\int_{B_{{R}/{4}}}\left(\eta D_{\mathcal {Y}}^{2}u+2 D_{\mathcal {Y}}\eta D_{\mathcal {Y}}u\right )(\eta \Delta_{\mathcal {Y}}u)dxdt \\\nonumber
   &\quad-2\int_0^s\int_{B_{{R}/{4}}}(\eta f)\left(2 D_{\mathcal {Y}}\eta D_{\mathcal {Y}}u+\eta D_{\mathcal {Y}}^2u\right)dxdt\\\nonumber
   & \leq C\left(\int_0^s\int_{B_{{R}/{4}}}|\eta\nabla^2_{\mathcal {Y}}u|^2dxdt+\int_0^s\int_{B_{{R}/{2}}}| D_{\mathcal {Y}}\eta D_{\mathcal {Y}}u|^2+(\eta f)^2dxdt\right)\\\nonumber
   &\leq C\left(\int_0^s\int_{B_{{R}/{4}}}|\nabla^2_{\mathcal {Y}}u|^2dxdt + \frac{1}{R^{2}}\int_0^s\int_{B_{{R}/{2}}}| \nabla_\mathcal {Y}u|^2dxdt+\int_0^s\int_{B_{R}} f^2dxdt\right).
\end{align}
It follows from \eqref{3.30}, \eqref{3.31} and \eqref{3.32} that
\begin{align}\label{10.05}
\sup_{t\in\big(0,\frac{R^2}{16}\big)}\int_{B_{{R}/{4}}}|\nabla_{\mathcal {Y}}u|^2dx&\leq\sup_{t\in\big(0,\frac{R^2}{16}\big)}\int_{B_{{R}/{4}}}\eta^2 |\nabla_{\mathcal {Y}}u|^2dx\\\nonumber
&\leq \frac{C}{R^{4}}\|u\|^2_{L^2(U_{R}^{0})}+C\|f\|^2_{L^2(U_{R}^{0})}.
\end{align}
Combined with inequalities \eqref{3.31} and \eqref{10.05}, the result follows.
\end{proof}
It is emphasized that the following result uses the rescaling technology, which depends on the homogeneity of the solution given in Remark \ref{10.16}.
In the sequel, $x_0\in\mathcal {Z}$ is a fixed point, and $B_R$ stands for $B_{R}(x_0)$.
\begin{Cor}\label{07.2.6}
Under the same assumptions of Proposition \ref{7.3.1}, for any $0<R\leq R_0<\varrho$, there exists a positive constant $C$, such that
\begin{equation*}
  \sup_{U^0_{{R}/{2^{k+2\tilde{\kappa}+2}}}}|\nabla^k_{\mathcal{X}}u|^2\leq \frac{C}{R^{Q+2+2k}}\iint_{U_{R}^{0}}u^2dxdt,
\end{equation*}
where $\tilde{\kappa}$ is the H\"{o}rmander index given in \eqref{2.78}, and $Q$ is the homogenous dimension given in \eqref{2.24}.
\end{Cor}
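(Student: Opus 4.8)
The plan is to deduce the pointwise bound from the $L^2$ Caccioppoli estimate \eqref{03.04} by a rescaling argument built on the $2$-homogeneity of $\partial_t-\Delta_{\mathcal{Y}}$ (Remark~\ref{10.16}), and then to convert the resulting energy bound into a supremum bound via a weighted Sobolev embedding. Write $\delta_\lambda(t,x_1,x_2):=(\lambda^2 t,\lambda x_1,\lambda^{\kappa+1}x_2)$, so that $\partial_t-\Delta_{\mathcal{Y}}$ is $2$-homogeneous and $d_C$ is $1$-homogeneous under $\delta_\lambda$. Since $Y_1=\partial_{x_1}$, $Y_2=x_1^{\kappa}\partial_{x_2}$, $\partial_t$ and the slice $\{t=0\}$ are all invariant under translation in the $x_2$-variable, we may assume $x_0=0$; then, for $0<R\le R_0<\varrho$, put $\tilde u:=u\circ\delta_{R/\varrho}$, which again solves \eqref{7.2} with $f=0$, now on $U_{\varrho}^{0}(0)$. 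A change of variables yields, for every integer $j\ge0$,
$$\|\tilde u\|^2_{L^2(U_{\varrho}^{0}(0))}=(\varrho/R)^{Q+2}\,\|u\|^2_{L^2(U_{R}^{0}(0))},\qquad \nabla^{j}_{\mathcal{Y}}\tilde u=(R/\varrho)^{j}\,(\nabla^{j}_{\mathcal{Y}}u)\circ\delta_{R/\varrho},$$
the exponent $Q+2=\kappa+4$ being precisely the parabolic homogeneous dimension read off from the dilation weights $2,1,\kappa+1$, and $\delta_{R/\varrho}$ mapping $B_{\varrho/2^{N}}(0)$ onto $B_{R/2^{N}}(0)$.

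Next I would carry out the estimate at the fixed scale. Applying \eqref{03.04} to $\tilde u$ (with its radius equal to $\varrho/2$) for all orders up to $m:=k+\tilde\kappa+2$ and summing gives $\sup_{t}\sum_{j\le m-1}\int_{B_{\varrho/2^{m+1}}(0)}|\nabla^{j}_{\mathcal{Y}}\tilde u|^2\,dx\le C\,\|\tilde u\|^2_{L^2(U_{\varrho}^{0}(0))}$. Since $m-1-k=\tilde\kappa+1>Q/2$, the spatial weighted Sobolev embedding $W^{s,2}_{\mathcal{Y}}\hookrightarrow L^\infty$ for $s>Q/2$ (see \cite{Bramanti10}), together with $\|\nabla^{j}_{\mathcal{Y}}v\|_{W^{m-1-k,2}_{\mathcal{Y}}}\le C\|v\|_{W^{m-1,2}_{\mathcal{Y}}}$ for $j\le k$, upgrades this to
$$\sup_{U_{\varrho/2^{N_0}}^{0}(0)}\big|\nabla^{j}_{\mathcal{Y}}\tilde u\big|^2\le C\,\|\tilde u\|^2_{L^2(U_{\varrho}^{0}(0))},\qquad 0\le j\le k,$$
with $N_0:=k+2\tilde\kappa+2$; here I use $m+1=k+\tilde\kappa+3\le N_0$, which is exactly where the identity $\tilde\kappa=\kappa+1$ enters, so that $U_{\varrho/2^{N_0}}^{0}(0)\subset U_{\varrho/2^{m+1}}^{0}(0)$.

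Then I would scale back and pass from the $\mathcal{Y}$-structure to the $\mathcal{X}$-structure. Inserting the homogeneity relations into the last display gives, for $0\le j\le k$,
$$\sup_{U_{R/2^{N_0}}^{0}(0)}\big|\nabla^{j}_{\mathcal{Y}}u\big|^2\le \frac{C}{R^{Q+2+2j}}\,\|u\|^2_{L^2(U_{R}^{0}(0))}\le\frac{C}{R^{Q+2+2k}}\,\|u\|^2_{L^2(U_{R}^{0}(0))},$$
using that $R\le R_0$ is bounded. Finally, on $B_{R/2^{N_0}}(0)\subset\{|x_1|<\varrho\}$ one has $h(x_1)=x_1^{\kappa}\varsigma(x_1)$ with $\varsigma$ smooth, hence $X_2=\varsigma(x_1)Y_2$ and, iterating, $X^{I}u=\sum_{|J|\le k}c_{I,J}(x_1)\,Y^{J}u$ with coefficients $c_{I,J}$ polynomial in $\varsigma,\varsigma',\dots$ and uniformly bounded on $\{|x_1|\le R_0\}$. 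Therefore $|\nabla^{k}_{\mathcal{X}}u|\le C\sum_{j\le k}|\nabla^{j}_{\mathcal{Y}}u|$ on that cylinder, and undoing the $x_2$-translation completes the proof.

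The main obstacle is obtaining the \emph{exact} exponent $R^{-(Q+2+2k)}$: a direct Caccioppoli-plus-Sobolev argument at a single scale only yields a power $R^{-2m}$ with $2m>Q+2+2k$, and the sharp power is recovered solely from the $2$-homogeneity of $\partial_t-\Delta_{\mathcal{Y}}$, i.e.\ from tracking carefully, in the rescaling step, the dilation weights and the Jacobian factor $R^{-(Q+2)}$. A secondary point to check is that \eqref{03.04} shrinks the cylinder by a factor $2$ at every differentiation, so one must verify that the exponent $N_0=k+2\tilde\kappa+2$ in the statement is large enough to absorb the $\approx k+\tilde\kappa$ derivatives and the Sobolev loss performed at the fixed scale; it is, with room to spare, since $\tilde\kappa=\kappa+1$.
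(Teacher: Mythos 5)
Your proposal is correct and follows essentially the same route as the paper: reduce to a fixed scale, combine the iterated Caccioppoli estimate of Proposition \ref{7.3.1} with a weighted Sobolev embedding into $L^{\infty}$, and recover the sharp exponent $R^{-(Q+2+2k)}$ by rescaling with the parabolic dilations $\delta_\lambda$ adapted to $\mathcal{Y}$. Two of your refinements are actually improvements in rigor: implementing the rescaling through the dilated solution $\tilde u=u\circ\delta_{R/\varrho}$ (rather than the paper's appeal in Remark \ref{10.16} to the $2$-homogeneity of $u$ itself, which is really only scale-invariance of the operator), and supplying the $\mathcal{Y}\to\mathcal{X}$ conversion via $X_2=\varsigma(x_1)Y_2$ that the statement requires but the paper's proof leaves implicit.
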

\begin{proof}
First, assume $R\geq1$. By $\partial_t u=\Delta_{\mathcal {Y}}u$, for any positive integers $k,i,j$, and $2i+j\leq k$, we get
\begin{equation}\label{10.20}
 \iint_{U^{0}_{1/2^{k}}}|\partial^i_t \nabla^j_{\mathcal {Y}}u|^2dxdt
\leq C\iint_{U^{0}_{1/2^{k}}}|\nabla^{2i+j}_{\mathcal {Y}}u|^2dxdt\leq C\iint_{U^{0}_{1}}u^2dxdt,
\end{equation}
and the last inequality is derived by Proposition \ref{7.3.1} with $R=1$.

Now we claim that the following embedding holds
\begin{equation}\label{9.01}
H^{2\tilde{\kappa}+2}_{\mathcal {Y}}(U^0_1)\hookrightarrow L^{\infty}(U^0_1).
\end{equation}
If $u\in H^{2\tilde{\kappa}+2}_{\mathcal {Y}}(U^0_1)$, then $D_{\mathcal {Y}}^{2\tilde{\kappa}}\partial_t u\in L^2((0,1);B_1)$, it implies that
\begin{equation}\label{9.02}
u\in H^1((0,1);H_{\mathcal {Y}}^{2\tilde{\kappa}}(B_1)).
\end{equation}
Since $H^2(B_1)\hookrightarrow L^{\infty}(B_1)$, and the Sobolev embedding \eqref{10.19}, we have
\begin{equation}\label{9.03}
H_{\mathcal {Y}}^{2\tilde{\kappa}}(B_1)\hookrightarrow H^2(B_1)\hookrightarrow L^{\infty}(B_1).
\end{equation}
Combined with \eqref{9.02} and \eqref{9.03}, the claim \eqref{9.01} is valid.

It follows from \eqref{9.01} and \eqref{10.20} that
\begin{align}\label{10.17}
\sup_{U^0_{1/2^{k+2\tilde{\kappa}+2}}}|D_{\mathcal {Y}}^ku|^2
\leq C\|u\|^2_{H^{2\tilde{\kappa}+2+k}_{\mathcal {Y}}\big(U^0_{1/2^{k+2\tilde{\kappa}+2}}\big)}
\leq C\|u\|^2_{L^2(U^0_{1})}.
\end{align}

Recall $\tilde{\kappa}=\kappa+1$, \eqref{ho1} given in Remark \ref{10.16},
and Definition \ref{2.6}, let $$\gamma'(\tau)=\sum^2_{j=1}\lambda_j(\tau)Y_j(\gamma(\tau)), a.e.,\ \text{and}\ \gamma(0)=x_0,$$
 with $\sum^2_{j=1}\lambda^2_j\leq1$ a.e.. By rescaling technology, we have
\begin{align}\label{9.13}
  \sup_{(t,x)\in U^0_{{R}/{2^{k+2\tilde{\kappa}+2}}}}|D_{\mathcal {Y}}^ku(t,x)|^2
  &=  \sup_{t\in(0,{R^2}/{2^{2(k+2\tilde{\kappa}+2)}})}\sup_{\tau\in(0,{R}/{2^{k+2\tilde{\kappa}+2}})}|D_{\mathcal {Y}}^ku(t,\gamma(\tau))|^2 \\\nonumber
  &=  \sup_{s\in(0,{1/2^{2(k+2\tilde{\kappa}+2)}})}\sup_{\tilde{\tau}\in(0,{1/2^{k+2\tilde{\kappa}+2}})}|D_{\mathcal {Y}}^ku(R^2s,\gamma(R\tilde{\tau}))|^2 \\\nonumber
  &=\sup_{(s,y_1,y_2)\in U^0_{{1/2^{k+2\tilde{\kappa}+2}}}}\frac{1}{R^{2k}}|D_{\mathcal {Y}}^ku(R^2s,Ry_1,R^{\kappa+1}y_2)|^2\\\nonumber
  &  =\frac{1}{R^{2k-4}}\sup_{(s,y)\in U^0_{1/2^{k+2\tilde{\kappa}+2}}}|D_{\mathcal {Y}}^ku(s,y)|^2.
  \end{align}

It follows from \eqref{9.13} and \eqref{10.17} that
\begin{align*}
  \sup_{(t,x)\in U^0_{{R}/{2^{k+2\tilde{\kappa}+2}}}}|D_{\mathcal {Y}}^ku(t,x)|^2
  &=\frac{1}{R^{2k-4}}\sup_{(s,y)\in U^0_{1/2^{k+2\tilde{\kappa}+2}}}|D_{\mathcal {Y}}^ku(s,y)|^2\\
 &\leq \frac{C}{R^{2k-4}}\iint_{U^{0}_{1}}u^2(s,y)dyds\\
  &=\frac{C}{R^{Q+2k+2}}\iint_{U^{0}_{R}}u^2(t,x)dxdt,
\end{align*}
the last equality can be treated as \eqref{9.13} by rescaling technology and \eqref{ho1}.

For the case $0<R<1$, there exists a integer $l$ such that $R\geq 2^{-l}$. Similar to \eqref{10.20}, we obtain
\begin{equation*}
\|u\|_{H^{k}_{\mathcal {Y}}\big(U^0_{1/2^{l+k}}\big)}\leq C_l\|u\|_{L^2\big(U^0_{1/2^{l}}\big)}.
\end{equation*}
Let $t=(2^lR)^2s, x=2^lR$, using the rescaling technology again, the result follows.

\end{proof}
\begin{Prop}\label{7.2.8}
Under the same assumptions of Proposition \ref{7.3.1}, for any  $0<\rho\leq R\leq R_0<\varrho$, there exists a positive constant $C$, such that
\begin{equation}\label{8.4}
\iint_{U^0_\rho}|\nabla^{2}_{\mathcal {Y}}u|^2dxdt\leq C\left(\frac{\rho}{R}\right)^{Q+4}\iint_{U^0_R}|\nabla^{2}_{\mathcal {Y}}u|^2dxdt.
\end{equation}
\end{Prop}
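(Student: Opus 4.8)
The statement is a Campanato-type decay for the Hessian $\nabla^2_{\mathcal Y}u$ of a solution of the model equation $\partial_t u-\Delta_{\mathcal Y}u=0$ with the \emph{zero} initial datum prescribed in \eqref{7.2}, and the plan is to obtain it in one shot (no iteration) from Corollary \ref{07.2.6} together with the observation that the zero initial condition forces $u$ — hence every spatial $\mathcal Y$-derivative of $u$ — to vanish on $\{t=0\}\cap B_{R_0}(x_0)$. It is enough to treat $0<\rho\le cR$, where $c=2^{-(6+2\tilde\kappa)}$ is the dilation constant from Corollary \ref{07.2.6} with $k=4$, so that $U^0_\rho\subset U^0_{cR}$; for $cR<\rho\le R$ inequality \eqref{8.4} is immediate from the monotonicity of $\rho\mapsto\iint_{U^0_\rho}|\nabla^2_{\mathcal Y}u|^2dxdt$ and $c^{Q+4}\le(\rho/R)^{Q+4}$.

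For $\rho\le cR$ I would first bound the left-hand side crudely by volume times supremum, using $|U^0_\rho|\le C\rho^{Q+2}$ from \eqref{2.8},
$$\iint_{U^0_\rho}|\nabla^2_{\mathcal Y}u|^2\,dxdt\ \le\ C\rho^{Q+2}\sup_{U^0_\rho}|\nabla^2_{\mathcal Y}u|^2,$$
and then gain two extra powers of $\rho$ from the vanishing at $t=0$: since $\partial_t$ commutes with each $Y_i$ and $\partial_t u=\Delta_{\mathcal Y}u$, we have $\partial_t(Y_iY_ju)=Y_iY_j\Delta_{\mathcal Y}u$, which is a sum of components of $\nabla^4_{\mathcal Y}u$; combined with $Y_iY_ju(0,\cdot)=0$ on $B_{R_0}(x_0)$ and the fundamental theorem of calculus in $t$ (legitimate since $u$ is smooth up to $\{t=0\}$ in the interior by parabolic regularity), this yields
$$\sup_{U^0_\rho}|\nabla^2_{\mathcal Y}u|\ \le\ \rho^2\sup_{U^0_\rho}|\partial_t\nabla^2_{\mathcal Y}u|\ \le\ C\rho^2\sup_{U^0_{cR}}|\nabla^4_{\mathcal Y}u|.$$
Applying Corollary \ref{07.2.6} with $k=4$ — after passing from $\mathcal X$- to $\mathcal Y$-derivatives, which on $B_{R_0}(x_0)$ costs only smooth factors $\varsigma^{\pm1}$ and their derivatives since $X_1=Y_1$ and $X_2=\varsigma(x_1)Y_2$ with $\varsigma$ bounded away from $0$ — gives $\sup_{U^0_{cR}}|\nabla^4_{\mathcal Y}u|^2\le CR^{-(Q+10)}\iint_{U^0_R}u^2\,dxdt$.

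Finally I would convert the $L^2$-norm of $u$ back to that of $\nabla^2_{\mathcal Y}u$ by a parabolic Poincaré inequality for functions vanishing at $t=0$: writing $u(t,x)=\int_0^t\partial_s u(s,x)\,ds$ and using Cauchy--Schwarz with $t<R^2$ on $U^0_R$ gives $\iint_{U^0_R}u^2\,dxdt\le CR^4\iint_{U^0_R}|\partial_t u|^2\,dxdt=CR^4\iint_{U^0_R}|\Delta_{\mathcal Y}u|^2\,dxdt\le CR^4\iint_{U^0_R}|\nabla^2_{\mathcal Y}u|^2\,dxdt$, the last step by $|\Delta_{\mathcal Y}u|\le|\nabla^2_{\mathcal Y}u|$. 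Combining the displayed bounds,
$$\iint_{U^0_\rho}|\nabla^2_{\mathcal Y}u|^2\,dxdt\ \le\ C\,\rho^{Q+2}\cdot\rho^{4}\cdot\frac{1}{R^{Q+10}}\cdot R^{4}\iint_{U^0_R}|\nabla^2_{\mathcal Y}u|^2\,dxdt\ =\ C\Big(\tfrac{\rho}{R}\Big)^{Q+6}\iint_{U^0_R}|\nabla^2_{\mathcal Y}u|^2\,dxdt,$$
and since $\rho\le R$ the exponent may be relaxed to $Q+4$, which proves \eqref{8.4}. The only real difficulty is bookkeeping rather than conceptual: one must justify the up-to-$\{t=0\}$ smoothness used in the FTC step and carry the dilation exponents $k+2\tilde\kappa+2$ through Corollary \ref{07.2.6}; no new idea beyond that corollary and the zero initial datum is needed. (Equivalently one may subtract from $u$ its parabolic Taylor polynomial at $(0,x_0)$, which is $\equiv0$ precisely because of the zero initial condition — the same mechanism seen from the Campanato viewpoint.)
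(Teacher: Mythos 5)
Your proposal is correct and follows essentially the same route as the paper's proof: bound the left side by volume times supremum, apply Corollary \ref{07.2.6} (with $k=4$), and use the zero initial datum twice via the fundamental theorem of calculus in $t$ together with $\partial_t=\Delta_{\mathcal Y}$ — once to gain the extra powers of $\rho$ from $\nabla^2_{\mathcal Y}u|_{t=0}=0$ and once to convert $\iint_{U^0_R}u^2$ back into $\iint_{U^0_R}|\nabla^2_{\mathcal Y}u|^2$ — landing on the same exponent $Q+6$ before relaxing to $Q+4$, with the regime $\rho$ comparable to $R$ handled trivially. The only cosmetic difference is that you harvest the $\rho^2$ gain at the $L^\infty$ level while the paper does it in $L^2$.
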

\begin{proof}
 For the case $0<\rho<\frac{R}{2^{k+2\tilde{\kappa}+2}}$, by Corollary \ref{07.2.6}, we have
\begin{align}\label{7.2.7}
  \iint_{U^0_\rho}|\nabla^k_{\mathcal {Y}}u|^2dxdt &\leq |U^0_\rho|\sup_{U^0_\rho}|\nabla^k_{\mathcal {Y}}u|^2\\\nonumber
  &\leq C\rho^{Q+2}\sup_{U^0_{{R}/{2^{k+2\tilde{\kappa}+2}}}}|\nabla^k_{\mathcal {Y}}u|^2 \\\nonumber
  &\leq \frac{C}{R^{2k}}\left(\frac{\rho}{R}\right)^{Q+2}\iint_{U^0_R}u^2dxdt.
 \end{align}
 Since $u\big|_{t=0}=0$, it follows from Leibniz formula and the H\"{o}lder inequality that
 \begin{align*}
   \int^{R^2}_0|u|^2ds=\int^{R^2}_0\left(\int^s_0\partial_tu(\xi)d\xi\right)^2ds &\leq R^2\int^{R^2}_0\int^s_0|\partial_tu(\xi)|^2d\xi ds\\\nonumber
    &\leq CR^4\int^{R^2}_0|\partial_tu(\xi)|^2d\xi.
 \end{align*}
 Since $\partial_tu=\Delta_{\mathcal {Y}}u$, and $|\Delta_{\mathcal {Y}}u|\leq|\nabla^2_\mathcal {Y}u|$, we have
  \begin{align}\label{8.26}
   \iint_{U^0_R}|u|^2dsdx\leq CR^4\iint_{U^0_R}|\partial_tu(\xi)|^2d\xi dx\leq CR^4\iint_{U^0_R}|\nabla^2_\mathcal {Y}u|^2dxdt.
 \end{align}
Similar to \eqref{8.26}, since $D^{2}_{\mathcal {Y}}u\big|_{t=0}=0$, we have
\begin{equation}\label{8.24}
\iint_{U^0_{\rho}}|\nabla^{2}_{\mathcal {Y}}u|^2dxdt \leq C\rho^4\iint_{U^0_{\rho}}|\partial_t \nabla^{2}_{\mathcal {Y}}u|^2dxdt\leq C\rho^4\iint_{U^0_{\rho}}|\nabla^{4}_{\mathcal {Y}}u|^2dxdt.
\end{equation}
  It follows from \eqref{7.2.7}-\eqref{8.24} that
\begin{align*}
  \iint_{U^0_{\rho}}|\nabla^{2}_{\mathcal {Y}}u|^2dxdt & \leq C\rho^4\iint_{U^0_{\rho}}|\nabla^{4}_{\mathcal {Y}}u|^2dxdt\\
  &\leq C\rho^4\left(\frac{\rho}{R}\right)^{Q+2}\frac{1}{R^{8}}\iint_{U^0_{R}}u^2dxdt \\
   & \leq C\left(\frac{\rho}{R}\right)^{Q+6}\iint_{U^0_{R}}|\nabla^2_{\mathcal {Y}}u|^2dxdt.
\end{align*}

For the case $\frac{R}{2^{k+2\tilde{\kappa}+2}}\leq\rho\leq R$, take $C=2^{(k+2\tilde{\kappa}+2)(Q+4)}$, and the result follows.
\end{proof}
With the aid of the above results one can prove the following.
\begin{Prop}\label{7.2.9}
If $u\in C^\infty(\bar{U}_{T})$ is a weak solution of equation \eqref{7.2}, $U^0_{R}\subset U_T$, then for any $0<\rho\leq R\leq R_0<\varrho$, we have
\begin{align*}
\frac{1}{\rho^{Q+2+2\alpha}}\|\nabla^2_{\mathcal {Y}} u\|^2_{L^2(U^0_{\rho})} \leq
\frac{C}{R^{Q+2+2\alpha}}\|\nabla^2_{\mathcal {Y}} u\|^2_{L^2(U^0_{R})}+\frac{C}{R^{2\alpha}}\|f\|^2_{L^{\infty}(U^0_{R})}+C[f]^2_{C^{0,\alpha}_{\mathcal {Y}} (U^0_{R})}.
\end{align*}
\end{Prop}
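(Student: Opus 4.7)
The plan is to combine Proposition \ref{7.2.8}'s $(Q{+}4)$-decay for homogeneous solutions with zero initial data and an energy-type estimate for the inhomogeneous correction, then use the iterative Lemma \ref{A.4.} to downgrade the exponent to $Q{+}2{+}2\alpha$. Fix $z_0:=(0,x_0)$, freeze the source via $p(t):=f(z_0)\,t$ (so $(\partial_t-\Delta_{\mathcal Y})p=f(z_0)$ and $\nabla^2_{\mathcal Y}p\equiv 0$), and decompose $u=p+\bar v+\hat w$ on $U^0_R$, where $\hat w$ solves the Dirichlet problem
\[
(\partial_t-\Delta_{\mathcal Y})\hat w=f-f(z_0)\ \text{in}\ U^0_R,\qquad \hat w=0\ \text{on}\ \partial_p U^0_R,
\]
and $\bar v:=u-p-\hat w$. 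Since $u(0,\cdot)=p(0)=\hat w(0,\cdot)=0$, the residue $\bar v$ satisfies the homogeneous equation $(\partial_t-\Delta_{\mathcal Y})\bar v=0$ on $U^0_R$ with $\bar v(0,\cdot)=0$, placing it in the scope of Proposition \ref{7.2.8}.

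Applying that proposition to $\bar v$ and using $\nabla^2_{\mathcal Y} u=\nabla^2_{\mathcal Y}\bar v+\nabla^2_{\mathcal Y}\hat w$ together with the triangle inequality in $L^2$ produces
\[
\iint_{U^0_\rho}|\nabla^2_{\mathcal Y} u|^2\leq C(\rho/R)^{Q+4}\iint_{U^0_R}|\nabla^2_{\mathcal Y} u|^2+C\|\nabla^2_{\mathcal Y}\hat w\|^2_{L^2(U^0_R)}.
\]
For the last term I would use a maximum-regularity estimate of the form $\|\nabla^2_{\mathcal Y}\hat w\|^2_{L^2(U^0_R)}\leq \tfrac{C}{R^4}\|\hat w\|^2_{L^2(U^0_R)}+C\|f-f(z_0)\|^2_{L^2(U^0_R)}$ (adapted from Corollary \ref{03.2}), bound $\|\hat w\|^2_{L^2(U^0_R)}$ by the maximum principle $\|\hat w\|_{L^\infty}\leq CR^2\|f\|_{L^\infty(U^0_R)}$ together with $|U^0_R|\sim R^{Q+2}$, and control the source via the H\"older estimate $|f-f(z_0)|\leq CR^\alpha[f]_{C^{0,\alpha}_{\mathcal Y}}$ on $U^0_R$. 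This yields
\[
\|\nabla^2_{\mathcal Y}\hat w\|^2_{L^2(U^0_R)}\leq CR^{Q+2+2\alpha}[f]^2_{C^{0,\alpha}_{\mathcal Y}}+CR^{Q+2}\|f\|^2_{L^\infty(U^0_R)}.
\]

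Finally, Lemma \ref{A.4.} applied with decay exponent $Q{+}4$ and target exponent $Q{+}2{+}2\alpha$ (allowed since $\alpha<1$) sharpens the decay; dividing the output by $\rho^{Q+2+2\alpha}$ and using $\rho\leq R$ to absorb the $R^{Q+2}$-scaling into $R^{-2\alpha}$ produces the three terms on the right-hand side of the proposition, the $\|f\|^2_{L^\infty}/R^{2\alpha}$ coming from the $R^{Q+2}\|f\|^2_{L^\infty}$-contribution of $\hat w$ via the $\rho^{-2\alpha}\leq R^{-2\alpha}$ bound.

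The main obstacle I expect is making the Dirichlet-type energy estimate for $\hat w$ rigorous: the paper's existing tools (Proposition \ref{7.3.1} Caccioppoli, Corollary \ref{03.2} interior $L^2$) yield only interior control, while $\hat w$ requires a \emph{global} $L^2$ bound on $U^0_R$ respecting its zero parabolic-boundary trace. This can either be supplied by invoking well-posedness of the degenerate Dirichlet problem for $\partial_t-\Delta_{\mathcal Y}$ (relying on its hypoellipticity), or avoided entirely by running a nested-cutoff Caccioppoli iteration on $u-p$ directly, so that the Dirichlet decomposition is implicit rather than explicit.
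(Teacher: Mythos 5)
Your proposal follows the same skeleton as the paper's proof: split $u$ into a caloric part with zero initial data (to which Proposition \ref{7.2.8} applies) plus a correction with zero parabolic-boundary data driven by $f$, estimate the correction in terms of $\|f\|_{L^{\infty}}$ and $[f]_{C^{0,\alpha}_{\mathcal {Y}}}$, and finish with Lemma \ref{A.4.}. The differences are only in the details. The paper does not freeze the source: it keeps $u_2$ with full right-hand side $f$ and instead splits $\|f\|^2_{L^2(U^0_R)}\le CR^{Q+2+2\alpha}[f]^2_{C^{0,\alpha}_{\mathcal {Y}}(U^0_R)}+CR^{Q+2}\|f\|^2_{L^{\infty}(U^0_R)}$ by comparing $f$ with its mean $\bar{f}_R$ (via Lemma \ref{A.3.}); this is exactly where the two $f$-terms in the statement come from, so your subtraction of $f(z_0)t$ is harmless but unnecessary. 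For the $L^2$ size of the correction, the paper tests the equation with $\partial_t u_2$ to obtain $\|\partial_t u_2\|^2_{L^2(U^0_R)}\le C\|f\|^2_{L^2(U^0_R)}$ (the boundary term has a sign because $u_2$ vanishes on $\partial_p U^0_R$) and then uses $u_2(0,\cdot)=0$ with a Poincar\'e-in-time inequality to get $\|u_2\|^2_{L^2(U^0_R)}\le CR^4\|\partial_t u_2\|^2_{L^2(U^0_R)}$, rather than your maximum-principle bound; both routes are viable, though yours additionally needs a comparison principle on the bounded cylinder (Proposition \ref{WMP2} supplies it). Finally, the obstacle you flag --- that Corollary \ref{03.2} controls $\nabla^2_{\mathcal {Y}}u_2$ only on $U^0_{R/4}$ rather than on all of $U^0_R$ --- is genuine, and the paper's resolution is essentially your second suggested fix: it invokes the interior estimate only for $\rho<R/4$ and uses the trivial constant $4^{Q+4}$ for $R/4\le\rho\le R$, so a global-up-to-the-lateral-boundary second-derivative bound is never required. (Both you and the paper take the solvability of the auxiliary Dirichlet problems on $U^0_R$ for granted, and both are equally informal about the $R$-dependence of the additive constant when feeding the inequality into Lemma \ref{A.4.}.)
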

\begin{proof}
Let us decompose $u=u_1+u_2$, where
\begin{equation}\label{007.1}
\left\{
 \begin{array}{ll}
  \partial_t u_1-\Delta_{\mathcal {Y}}u_1=0,\quad\ (t,x)\in U^0_{R},\\
  u_1\big|_{\partial_p U^0_R}=u\big|_{\partial_p U^0_R},
 \end{array}
\right.
\end{equation}
and
\begin{equation}\label{007.2}
\left\{
 \begin{array}{ll}
  \partial_t u_2-\Delta_{\mathcal {Y}}u_2=f,\quad \ (t,x)\in U^0_{R},\\
  u_2\big|_{\partial_p U^0_R}=0.
 \end{array}
\right.
\end{equation}

For the homogeneous equation \eqref{007.1}, by Proposition \ref{7.2.8}, we get
\begin{equation}\label{8.1}
\iint_{U^0_\rho}|\nabla^2_{\mathcal {Y}}u_1|^2dxdt\leq C\left(\frac{\rho}{R}\right)^{Q+4}\iint_{U^0_R}|\nabla^2_{\mathcal {Y}}u_1|^2dxdt.
\end{equation}

For the inhomogeneous equation \eqref{007.2}, multiply both sides of \eqref{007.2} by $\partial_tu_2$, and integrate on $U_{R}^{0}$. By $\varepsilon$-Cauchy inequality, we have
\begin{align}\label{8.29}
  \iint_{U_R^0}|\partial_t u_2|^2dxdt-\iint_{U_R^0}\partial_tu_2\Delta_{\mathcal {Y}}u_2dxdt&=\iint_{U_R^0}f\partial_tu_2dxdt\\\nonumber
  &\leq C_\varepsilon\iint_{U_R^0}f^2dxdt+\varepsilon\iint_{U_R^0}|\partial_tu_2|^2dxdt.
\end{align}
By using integration by parts, since $u_2\big|_{\partial_p U^0_R}=0$ and $D_{\mathcal {Y}}u_2\big|_{t=0}=0$, we get
\begin{align}\label{8.30}
  \iint_{U_R^0}\partial_tu_2\Delta_{\mathcal {Y}}u_2dxdt &=-\iint_{U_R^0}\nabla_{\mathcal {Y}}u_2\partial_t\nabla_{\mathcal {Y}}u_2dxdt \\\nonumber
  & =-\frac{1}{2}\iint_{U_R^0}\partial_t|\nabla_{\mathcal {Y}}u_2|^2dxdt\\\nonumber
  &=-\frac{1}{2}\int_{B_R}|\nabla_{\mathcal {Y}}u_2|^2\bigg|_{t=R^2}dx\leq0.
\end{align}
 Taking $\varepsilon$ small enough in  \eqref{8.29}, it follows from \eqref{8.30} that
 \begin{equation}\label{12.6}
  \iint_{U_R^0}|\partial_t u_2|^2dxdt\leq C\iint_{U_R^0}f^2dxdt.
 \end{equation}
Note that, by Lemma \ref{A.3.}, we have
 \begin{align}\label{12.4}
  \iint_{U_R^0}f^2dxdt &\leq \iint_{U_R^0}\big(f-\bar{f}_R\big)^2dxdt+\iint_{U_R^0}(\bar{f}_R)^2dxdt \\\nonumber
    &\leq  CR^{Q+2+2\alpha}[f]^2_{C^{0,\alpha}_{\mathcal {Y}} (U^0_{R})}+CR^{Q+2}\|f\|^2_{L^{\infty}(U^0_{R})}.
 \end{align}
where $\bar{f}_R$ is the average of $f$ on $U^0_R$.

Combined with \eqref{12.6} and \eqref{12.4}, we have
\begin{align}\label{10.01}
  \iint_{U_R^0}|\partial_t u_2|^2dxdt  &\leq CR^{Q+2+2\alpha}[f]^2_{C^{0,\alpha}_{\mathcal {Y}} (U^0_{R})}+CR^{Q+2}\|f\|^2_{L^{\infty}(U^0_{R})}.
\end{align}

For the case $0<\rho<\frac{R}{4}$, consider that $u_2\big|_{t=0}=0$. It follows from Corollary \ref{03.2}, \eqref{8.26} and \eqref{10.01} that
\begin{align}\label{8.3}
\iint_{U^0_\rho}|\nabla^2_{\mathcal {Y}}u_2|^2dxdt &\leq \iint_{U^0_{\frac{R}{4}}}|\nabla^2_{\mathcal {Y}}u_2|^2dxdt\\\nonumber
&\leq \frac{C}{R^4}\iint_{U^0_{R}}u^2_2dxdt+C\iint_{U^0_{R}}f^2dxdt\\\nonumber
 & \leq  C\iint_{U^0_{R}}|\partial_tu_2|^2 dxdt+C\iint_{U^0_{R}}f^2dxdt\\\nonumber
&\leq CR^{Q+2+2\alpha}[f]^2_{C^{0,\alpha}_{\mathcal {Y}} (U^0_{R})}+CR^{Q+2}\|f\|^2_{L^{\infty}(U^0_{R})}.
\end{align}

For the case $\frac{R}{4}<\rho<R$, taking $C=4^{Q+4}$, the following inequality is valid.
\begin{equation}\label{8.2}
\iint_{U^0_\rho}|\nabla^2_{\mathcal {Y}}u_2|^2dxdt\leq C\left(\frac{\rho}{R}\right)^{Q+4}\iint_{U^0_R}|\nabla^2_{\mathcal {Y}}u_2|^2dxdt.
\end{equation}
Combined with \eqref{8.3} and \eqref{8.2}, for any $0<\rho< R$, we have
\begin{align*}
&\quad\iint_{U^0_\rho}|\nabla^2_{\mathcal {Y}}u|^2dxdt\\
   &\leq 2\iint_{U^0_\rho}|\nabla^2_{\mathcal {Y}}u_1|^2dxdt+2\iint_{U^0_\rho}|\nabla^2_{\mathcal {Y}}u_2|^2dxdt\\
   &\leq C\left[\left(\frac{\rho}{R}\right)^{Q+4}\iint_{U^0_R}|\nabla^2_{\mathcal {Y}} u|^2dxdt
  +CR^{Q+2+2\alpha}\left([f]^2_{C^{0,\alpha}_{\mathcal {Y}} (U^0_{R})}+\frac{1}{R^{2\alpha}}\|f\|^2_{L^{\infty}(U^0_{R})}\right)\right].
\end{align*}
Hence by the Iterative Lemma \ref{A.4.}, the result follows.
\end{proof}
\begin{Prop}\label{7.2.10}
Under the same assumptions of Proposition \ref{7.2.9}, then for any $0<\rho\leq\frac{R}{4}$, we have
\begin{align*}
  &\quad \iint_{U^0_{\rho}}\big|D_{\mathcal {Y}}^2 u-\overline{D_{\mathcal{Y}}^2u}_\rho\big|^2dxdt\\
  & \leq C\rho^{Q+2+2\alpha}\left(\frac{1}{R^{4+2\alpha}}\|u\|^2_{L^{\infty}(U^0_{R})}+\frac{1}{R^{2\alpha}}\|f\|^2_{L^{\infty}(U^0_{R})}+[f]^2_{C^{0,\alpha}_{\mathcal {Y}} (U^0_{R})}\right).
\end{align*}
\end{Prop}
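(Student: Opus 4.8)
The plan is to deduce Proposition~\ref{7.2.10} from the Morrey-type decay of Proposition~\ref{7.2.9} and the Caccioppoli bound of Corollary~\ref{03.2}, by first replacing the Campanato oscillation of the second-order horizontal derivative by the full $L^2$ norm of the horizontal Hessian on the anchored cylinder. Concretely, since the average $\overline{D^2_{\mathcal Y}u}_\rho$ minimizes $\iint_{U^0_\rho}|D^2_{\mathcal Y}u-c|^2\,dx\,dt$ over constants $c$, and since $|D^2_{\mathcal Y}u|\le|\nabla^2_{\mathcal Y}u|$ (the horizontal Hessian dominates each $X^Iu$ with $|I|=2$), I would start from
$$\iint_{U^0_\rho}\big|D^2_{\mathcal Y}u-\overline{D^2_{\mathcal Y}u}_\rho\big|^2\,dx\,dt\ \le\ \iint_{U^0_\rho}\big|D^2_{\mathcal Y}u\big|^2\,dx\,dt\ \le\ \|\nabla^2_{\mathcal Y}u\|^2_{L^2(U^0_\rho)}.$$

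Next I would apply Proposition~\ref{7.2.9} with outer radius $R/4$ (legitimate because $\rho\le R/4$), which gives, for all $0<\rho\le R/4$,
$$\|\nabla^2_{\mathcal Y}u\|^2_{L^2(U^0_\rho)}\ \le\ \frac{C\rho^{Q+2+2\alpha}}{R^{Q+2+2\alpha}}\,\|\nabla^2_{\mathcal Y}u\|^2_{L^2(U^0_{R/4})}+C\rho^{Q+2+2\alpha}\Big(R^{-2\alpha}\|f\|^2_{L^\infty(U^0_R)}+[f]^2_{C^{0,\alpha}_{\mathcal Y}(U^0_R)}\Big),$$
where I used that $\|f\|_{L^\infty}$ and $[f]_{C^{0,\alpha}_{\mathcal Y}}$ only grow when the cylinder is enlarged from $U^0_{R/4}$ to $U^0_R$. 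I would then invoke Corollary~\ref{03.2} on the cylinder $U^0_R$, together with the volume bound \eqref{2.8}, to estimate
$$\|\nabla^2_{\mathcal Y}u\|^2_{L^2(U^0_{R/4})}\ \le\ \frac{C}{R^4}\|u\|^2_{L^2(U^0_R)}+C\|f\|^2_{L^2(U^0_R)}\ \le\ CR^{Q-2}\|u\|^2_{L^\infty(U^0_R)}+CR^{Q+2}\|f\|^2_{L^\infty(U^0_R)}.$$
Substituting this into the previous display and simplifying the powers of $R$ — note $\rho^{Q+2+2\alpha}R^{-(Q+2+2\alpha)}R^{Q-2}=\rho^{Q+2+2\alpha}R^{-4-2\alpha}$ and $\rho^{Q+2+2\alpha}R^{-(Q+2+2\alpha)}R^{Q+2}=\rho^{Q+2+2\alpha}R^{-2\alpha}$ — yields exactly the claimed estimate.

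I expect the point that deserves emphasis to be why one must pass through Proposition~\ref{7.2.9} instead of running a single ``freeze-the-coefficients'' decomposition directly: if one split $u=u_1+u_2$ on $U^0_R$ as in \eqref{007.1}--\eqref{007.2}, the homogeneous part $u_1$ would decay (via the interior sup-bounds of Corollary~\ref{07.2.6} and the anchored Poincar\'{e} inequality \eqref{8.24}), but the inhomogeneous part $u_2$ would not, contributing to the oscillation on $U^0_\rho$ only a term of order $R^{Q+2+2\alpha}$ rather than the required $\rho^{Q+2+2\alpha}$. Proposition~\ref{7.2.9} is precisely the statement in which this defect has already been repaired — its iterative proof absorbs the non-decaying inhomogeneous contribution into a term of the correct order $\rho^{Q+2+2\alpha}$ — and that decay is available only because $u$, solving \eqref{7.2}, vanishes on $\{t=0\}$ together with all its horizontal derivatives. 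Granting Proposition~\ref{7.2.9}, the only remaining care is bookkeeping of radii: $U^0_{R/4}$ is used simultaneously as the inner cylinder in Corollary~\ref{03.2} (whose hypothesis forces the data to be measured on the larger $U^0_R$) and as the outer radius in Proposition~\ref{7.2.9}, so that the hypothesis $\rho\le R/4$ is exactly what is needed and no circular dependence arises.
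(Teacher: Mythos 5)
Your argument is correct and follows essentially the same route as the paper: apply Proposition \ref{7.2.9} with outer radius $R/4$, control $\|\nabla^2_{\mathcal Y}u\|^2_{L^2(U^0_{R/4})}$ via Corollary \ref{03.2}, and convert the $L^2$ norms of $u$ and $f$ to $L^\infty$ norms using the volume bound \eqref{2.8}. The only (harmless) deviation is at the start: you invoke the fact that the mean minimizes the $L^2$ distance to constants to get $\iint_{U^0_\rho}|D^2_{\mathcal Y}u-\overline{D^2_{\mathcal Y}u}_\rho|^2\le\iint_{U^0_\rho}|D^2_{\mathcal Y}u|^2$ directly, whereas the paper reaches the same conclusion (with an extra factor of $2$) via the triangle inequality and Jensen's inequality.
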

\begin{proof}
By Proposition \ref{7.2.9} and Corollary \ref{03.2}, we have
\begin{align}\label{8.25}
 &\quad\iint_{U^0_{\rho}}\big|D_{\mathcal {Y}}^2u\big|^2dxdt \\\nonumber &\leq C\rho^{Q+2+2\alpha}\bigg(\frac{1}{R^{Q+2+2\alpha}}\iint_{U^0_{{R}/{4}}}|\nabla^2_{\mathcal {Y}} u|^2dxdt+\frac{1}{R^{2\alpha}}\|f\|^2_{L^{\infty}(U^0_{R})}+[f]^2_{C^{0,\alpha}_{\mathcal {Y}} (U^0_{R})}\bigg) \\\nonumber
  &\leq C\rho^{Q+2+2\alpha}\bigg(\frac{1}{R^{Q+6+2\alpha}}\iint_{U^0_{R}}u^2dxdt+\frac{1}{R^{2\alpha}}\|f\|^2_{L^{\infty}(U^0_{R})}
  +[f]^2_{C^{0,\alpha}_{\mathcal {Y}} (U^0_{R})}\bigg)\\\nonumber
  &\leq C\rho^{Q+2+2\alpha}\left(\frac{1}{R^{4+2\alpha}}\|u\|^2_{L^{\infty}(U^0_{R})}+\frac{1}{R^{2\alpha}}\|f\|^2_{L^{\infty}(U^0_{R})}+[f]^2_{C^{0,\alpha}_{\mathcal {Y}} (U^0_{R})}\right).
\end{align}
It follows from \eqref{8.25} and the Jensen's inequality that
\begin{align*}
 \left(\overline{D_{\mathcal {Y}}^2u}_\rho\right)^2& \leq \frac{1}{|U^0_\rho|}\iint_{U^0_\rho}\big|D_{\mathcal {Y}}^2u\big|^2dxdt\\
  &\leq C\rho^{2\alpha}\left(\frac{1}{R^{4+2\alpha}}\|u\|^2_{L^{\infty}(U^0_{R})}+\frac{1}{R^{2\alpha}}\|f\|^2_{L^{\infty}(U^0_{R})}+[f]^2_{C^{0,\alpha}_{\mathcal {Y}} (U^0_{R})}\right).
\end{align*}
Then we have
\begin{align*}
   &\quad\iint_{U^0_{\rho}}\big|D_{\mathcal {Y}}^2u-\overline{D_{\mathcal{X}}^2u}_\rho\big|^2dxdt\\
   &\leq 2\iint_{U^0_{\rho}}\big|D_{\mathcal {Y}}^2 u\big|^2dxdt+2\iint_{U^0_{\rho}}\overline{D_{\mathcal{X}}^2u}_\rho dxdt \\
  &\leq C\rho^{Q+2+2\alpha}\left(\frac{1}{R^{4+2\alpha}}\|u\|^2_{L^{\infty}(U^0_{R})}+\frac{1}{R^{2\alpha}}\|f\|^2_{L^{\infty}(U^0_{R})}+[f]^2_{C^{0,\alpha}_{\mathcal {Y}} (U^0_{R})}\right).
\end{align*}
\end{proof}
\begin{proof}[{\bf Proof of Lemma \ref{1.1}}]
The proof is a blend of the localizing technique as Theorem 8.11.1 in \cite{9}, and the method of freezing the coefficients.
For any $x=(x_1,x_2)\in\mathbb{R}^2$, and $R_0\in(0,\varrho)$ chosen as below. Set
$$U_T^0:=\left(0,\frac{R_0^2}{16}\right)\times\mathbb{R}^2\subset U_T,\ \text{and}\ U_T^\prime:=\left[\frac{R_0^2}{16},T\right)\times \mathbb{R}^2\Subset  U_T.$$
Let
 $$ \mathcal {D}_{R}:=\left\{(t,x_1,x_2)\big|t\in\left(0,\frac{R_0^2}{16}\right), x_1\in\left(-R,R\right),x_2\in\mathbb{R}\right\},$$
$$\mathcal{D}_1=\left(0,\frac{R_0^2}{16}\right)\times \left(-\frac{3R_0}{8},\frac{3R_0}{8}\right)\times\mathbb{R},
\ \mathcal{D}_2=U^0_T\setminus\mathcal {D}_{{3R_0}/{8}}.$$
Divide the entire space $U_T$ and $U_T^0$  as follows
\begin{equation*}
  U_T=U_T^0\bigcup U'_T,\ \text{and}\ U_T^0=\mathcal{D}_1\bigcup\mathcal{D}_2.
\end{equation*}

In Step 1 to Step 3, we discuss the Schauder estimate for the Cauchy problem
\begin{equation}\label{7.3}
\left\{
  \begin{array}{ll}
    \partial_t u-\Delta_{\mathcal {X}}u=f,\qquad (t,x)\in U_T,\\
     u(0,x)=0, \qquad\qquad\ x\in \mathbb{R}^2.
  \end{array}
\right.
\end{equation}
Then we extend the estimate to \eqref{1.8} in Step 4.


\noindent{\bf{Step\ 1.}} First we discuss the local prior estimates on $\mathcal{D}_1$ for $\Delta_{\mathcal {Y}}$ by frozen coefficient method.
If  $u\in C^{\infty}(\bar{U}_T)$ is a solution of the equation \eqref{7.2}, take $R=R_0$, then by Lemma \ref{A.3.} and Proposition \ref{7.2.10}, we get
\begin{equation}\label{7.4}
  \big[D_{\mathcal {Y}}^2u\big]_{C^{0,\alpha}_{\mathcal {Y}} (U^0_{{R_0}/{4}})}\leq C\bigg(\frac{1}{R_{0}^{2+\alpha}}\|u\|_{L^{\infty}(U^0_{R_0})}+\frac{1}{R_{0}^\alpha}\|f\|_{L^{\infty}(U^0_{R_0})}+[f]_{C^{0,\alpha}_{\mathcal {Y}} (U^0_{R_0})}\bigg).
\end{equation}
By the modification method, it is sufficient to illustrate \eqref{7.4} also holds for the solution $u \in C^{2,\alpha}_{\mathcal {Y}}(\bar{U}_T)$, and we omit it.

Then suppose that $u$ is the solution of the equation \eqref{7.3} for any $x_0\in\mathcal {Z}$. By the frozen coefficient method, fixed $x_1^0\in(-R_0,R_0)$, we consider
\begin{equation}\label{FC}
  \partial_t u-\left(Y^2_1+\varsigma^2(x^0_1)Y_2^2\right)u=f+\left(\varsigma^2(x_1)-\varsigma^2(x^0_1)\right)Y_2^2u.
\end{equation}
It follows from \eqref{7.4} that
\begin{align*}
  \big[D_{\mathcal {Y}}^2u\big]_{C^{0,\alpha}_{\mathcal {Y}} (U^0_{{R_0}/{4}})}
  &\leq C_4\bigg(R_0^{\alpha}[D_{\mathcal {Y}}^2u\big]_{C^{0,\alpha}_{\mathcal {Y}} (U^0_{R_0})}+\frac{1}{R_0^{2+\alpha}}\|u\|_{L^{\infty}(U^0_{R_0})}\\
  &\quad+\frac{1}{R_0^\alpha}\|f\|_{L^{\infty}(U^0_{R_0})}+[f]_{C^{0,\alpha}_{\mathcal {Y}} (U^0_{R_0})}\bigg).
\end{align*}
Let $R_0\leq C^{-1/\alpha}_4$, then we have
\begin{align*}
  \big[D_{\mathcal {Y}}^2u\big]_{C^{0,\alpha}_{\mathcal {Y}} (U^0_{{R_0}/{4}})}
  &\leq C\bigg(\frac{1}{R_0^{2+\alpha}}\|u\|_{L^{\infty}(U^0_{R_0})}+\frac{1}{R_0^\alpha}\|f\|_{L^{\infty}(U^0_{R_0})}+[f]_{C^{0,\alpha}_{\mathcal {Y}} (U^0_{R_0})}\bigg).
\end{align*}
By choosing
$$C_{R_0}:=\max\left\{C,CR_0^{-(2+\alpha)},CR_0^{-\alpha},2^{3\alpha+1}R_0^{-\alpha}\right\},$$ we have
\begin{align}\label{4}
  \big[D_{\mathcal {Y}}^2u\big]_{C^{0,\alpha}_{\mathcal {Y}} (U^0_{{R_0}/{4}})}
  &\leq C_{R_0}\left(\|f\|_{C^{0,\alpha}_{\mathcal {Y}} (U^0_{R_0})}+\|u\|_{L^{\infty}(U^0_{R_0})}\right).
\end{align}

\noindent{\bf{Step\ 2.}}
Here we discuss the estimates on $U'_T$ and $\mathcal{D}_2$ by applying a covering argument.
Set
\begin{equation}\label{Uk}
  U_{R_0}(t,x):=\big(t-R^2_0,t+R^2_0\big)\times B_{R_0}(x)\subset U_T.
\end{equation}

Let $\big\{(t_m,x_m)\big\}_{m\geq1}$ be a sequence of points in 
$U'_T$ such that
$$U'_T\subset\bigcup_{k\geq1}U_{R_0/4}(t_m,x_m),\ \text{and}\ U_{R_0/4}(t_m,x_m)\bigcap\{t=0\}=\emptyset.$$
Consider $u$ is the solution of \eqref{7.3}, as in \eqref{FC},  by Lemma \ref{Sch},
we first obtain the interior estimates

\begin{equation}\label{3}
   \|u\|_{C_{\mathcal {Y}}^{2,\alpha}(U_{R_0/4}(t_m,x_m))}\leq C\left(\|f\|_{C_{\mathcal {Y}}^{0,\alpha}(U_T)}+\|u\|_{L^{\infty}(U_T)}\right).
\end{equation}

Let $\big\{x_k\big\}_{k\geq1}$ be a sequence of points in $\mathcal{Z}$ such that
\[\mathcal{D}_{{3R_0}/{16}} \subset \bigcup_{k\geq1}U^0_{R_0/4}(x_k).\]
Let $\big\{x_l\big\}_{l\geq1}$ be a sequence of points in $\mathcal{D}_2$ such that
$$\mathcal{D}_2\subset (U_T^0\setminus \mathcal{D}_{{3R_0}/{16}})\subset \bigcup_{l\geq1}U^0_{R_0/4}(x_l)\subset U_T,\ \text{and}\ U^0_{R_0/4}(x_l)\bigcap\mathcal {Z}=\emptyset,$$
then we have
$$U_T^0\subset\bigcup_{k\geq1}U^0_{R_0/4}(x_k)\bigcup_{l\geq1}U^0_{R_0/4}(x_l).$$

By the classical Schauder estimates,  we have
\begin{equation}\label{5}
  \|u\|_{C^{2,\alpha}(U^0_{R_0/4}(x_l))}\leq C\left(\|f\|_{C^{0,\alpha}(U_T\setminus\mathcal {D}_{{R_0}/{8}})}+\|u\|_{L^{\infty}(U_T\setminus\mathcal {D}_{{R_0}/{8}})}\right).
\end{equation}


\noindent{\bf{Step\ 3.}} We prove the Schauder estimates on $U_T$ for \eqref{7.3}.
It follows from \eqref{3}, \eqref{5} and \eqref{4} that
\begin{align}\label{7.5}
  \big[D_{\mathcal {Y}}^2u\big]_{C^{0,\alpha}_{\mathcal {Y}} (U_{{R_0}/{4}}(t_0,x_0))}
  &\leq C_{R_0}\left(\|f\|_{C^{0,\alpha}_{\mathcal {Y}} (U_T)}+\|u\|_{L^{\infty}(U_T)}\right).
\end{align}
If $u$ is the solution of \eqref{7.3}, for any $(t,x),(s,y)\in U_T$, if $d_P((t,x),(s,y))\geq \frac{R_0}{8}$, then
\begin{equation}\label{8.01}
  \frac{|D_{\mathcal {Y}}^2u(x)-D_{\mathcal {Y}}^2u(y)|}{d_P((t,x),(s,y))^\alpha}\leq\frac{2^{3\alpha+1}}{R_0^\alpha}\|D_{\mathcal {Y}}^2u\|_{L^\infty(U_T)}.
\end{equation}

If $d_P((t,x),(s,y))< \frac{R_0}{8}$, then there exists$(t_0,x_0)\in\{(t_m,x_m),(0,x_k),(0,x_l),\ k,l\geq1\}$ mentioned in Step 1, such that $x,y\in B_{{R_0}/{4}}(x_0)$.
It follows from \eqref{8.05} and \eqref{7.5} that
\begin{align}\label{8.02}
  \frac{|D_{\mathcal {Y}}^2u(t,x)-D_{\mathcal {Y}}^2u(t,y)|}{d_P((t,x),(s,y))^\alpha} &\leq[D^2_{\mathcal {Y}}u]_{C^{0,\alpha}_{\mathcal {Y}}(U_{{R_0}/{4}}(t_0,x_0))} \\ \nonumber
 &\leq C_{R_0}\left(\|u\|_{L^{\infty}(U_T)}+\|f\|_{C^{0,\alpha}_{\mathcal {Y}} (U_T)}\right).
\end{align}
Combined with \eqref{8.01} and \eqref{8.02}, we have
\begin{align*}
  [D_{\mathcal {Y}}^2u\big]_{C^{0,\alpha}_{\mathcal {Y}} (U_{T})}&=\sup_{(t,x),(s,y)\in U_T}\frac{|D_{\mathcal {Y}}^2u(t,x)-D_{\mathcal {Y}}^2u(s,y)|}{d_P((t,x),(s,y))^\alpha} \\\nonumber
 &\leq C_{R_0}\left(\|u\|_{L^{\infty}(U_T)}+\|f\|_{C^{0,\alpha}_{\mathcal {Y}} (U_T)}+\|D_{\mathcal {Y}}^2u\|_{L^\infty(U_T)}\right).
\end{align*}
By the interpolation inequality given in Corollary \ref{P}, then we have
\begin{equation*}
  \big\|u\|_{C^{2,\alpha}_{\mathcal {Y}} (U_T)}\leq C_{R_0}\left(\|u\|_{L^{\infty}(U_{T})}+\|f\|_{C^{0,\alpha}_{\mathcal {Y}} (U_{T})}\right).
\end{equation*}
By the spaces equivalence induced by the vector field $\mathcal {X}$ and $\mathcal {Y}$ given in Lemma 2.2 of \cite{Nagel85}, we have
\begin{equation}
   \big\|u\|_{C^{2,\alpha}_{\mathcal {X}} (U_T)}\leq C_{R_0}\left(\|u\|_{L^{\infty}(U_{T})}+\|f\|_{C^{0,\alpha}_{\mathcal {X}} (U_{T})}\right).
\end{equation}

\noindent{\bf{Step\ 4.}} We extend the Schauder estimates to \eqref{1.8}.
First, if $u$ is the solution of the variable coefficient equation
\begin{equation*}
\left\{
  \begin{array}{ll}
    \partial_tu-\sum^{2}_{i,j=1}a_{ij}(t,x)X_iX_ju=f,\quad (t,x)\in U_{T},\\
     u(0,x)=0, \qquad\qquad\qquad\qquad\qquad\ \ x\in \mathbb{R}^2.
  \end{array}
\right.
\end{equation*}
 By the same argument, using the freezing coefficient method and the interpolation inequality, we obtain
\begin{align*}
  \big\|u\|_{C^{2,\alpha}_{\mathcal {X}} (U_T)} &\leq C_{R_0}\left(\|u\|_{L^{\infty}(U_{T})}+\|f\|_{C^{0,\alpha}_{\mathcal {Y}} (U_{T})}+\|D^2_{\mathcal {X}}u\|_{C^{0,\alpha}_{\mathcal {X}} (U_{T})}\right)\\
   &\leq C\left(\|u\|_{L^{\infty}(U_{T})}+\|f\|_{C^{0,\alpha}_{\mathcal {X}} (U_{T})}\right).
\end{align*}

Second, if $u$ is the solution for the equation \eqref{1.8}, we have
\begin{align*}
  \|u\|_{C^{2,\alpha}_{\mathcal {X}}(U_{T})} & \leq C\bigg(\big\|f-\sum^{2}_{i,j=1}b_{i}(t,x)X_iu-c(t,x)u-\mathcal {H}u_0\big\|_{C^{0,\alpha}_{\mathcal {X}}(U_{T})}+\|u\|_{L^{\infty}(U_{T})}\bigg).
\end{align*}
Using the interpolation inequality \eqref{9.28} again, we get
\begin{equation}\label{8.04}
  \big\|u\|_{C^{2,\alpha}_{\mathcal {X}} (U_T)}\leq C_{R_0}\left(\|u\|_{L^{\infty}(U_{T})}+\|\mathcal {H}u_0\|_{C^{0,\alpha}_{\mathcal {X}}(U_T)}+\|f\|_{C^{0,\alpha}_{\mathcal {X}} (U_{T})}\right),
\end{equation}
 and the result follows.
\end{proof}

\subsection{The proof of Theorem \ref{1.2}}
\begin{proof}[{\bf{Proof of Theorem \ref{1.2}}}]
The main idea of this proof is based on the Schauder estimates given in Lemma \ref{1.1} and the modification method.

\noindent{\bf{Step\ 1.}} First, we show the existence of the viscosity solution $u$ of \eqref{eq1.3}. Fixed $m\in C([0,T];\mathcal {P}_{1})$, in the equation \eqref{eq1.3}, the function $\left|\nabla_\mathcal {X}u\right|^2-F(x,\bar{m})$ is convex in $\nabla_\mathcal {X}u$. Hence we say that there exists a viscosity solution of \eqref{eq1.3} if $u$ is a value function defined in \eqref{4.21}, which refers to Theorem 4.3.1 in \cite{Hu}. 
 By the Hopf transform, setting
$w=e^{-\frac{u}{2}}$. It is clearly that $u$ is a solution of the  quasi-linear equation \eqref{eq1.3} if and only if $w$ is a solution of the linear equation \eqref{2.0}. Hence, there also exists a viscosity solution of \eqref{2.0}.

\noindent{\bf{Step\ 2.}} Consider that $F, f\in C_{\mathcal {X}}^{0,\alpha}(U_{T})$ which are not smooth, we mollify the equation  \eqref{2.0}. We introduce $\eta^\epsilon(x)=\epsilon^{-2}\eta(x/\epsilon)$ for $\epsilon>0$, and $x\in\mathbb{R}^2$, and $\eta$ is a mollification kernel with $\epsilon\rightarrow0$. Then we have
$$\eta^{\epsilon}\ast \partial_t w-\eta^{\epsilon}\ast \Delta_{\mathcal {X}}w+\frac{1}{2}\eta^{\epsilon}\ast (Fw)=0.$$
Denote $w^{\epsilon}:=\eta^{\epsilon}\ast w\in C_0^{\infty}(U_T)$, then $w^{\epsilon}$ satisfies
\begin{equation}\label{3.36}
\left\{
  \begin{array}{ll}
    \partial_t w^{\epsilon}-\Delta_{\mathcal {X}}w^{\epsilon}+\frac{1}{2}Fw^{\epsilon}=f^{\epsilon},\qquad\ (t,x)\in U_{T},  \\
    w^{\epsilon}(0,x)=\eta^{\epsilon}\ast w(0,x),\qquad\qquad\quad x\in\mathbb{R}^2,
  \end{array}
\right.
\end{equation}
where
\begin{equation}\label{8.08}
f^{\epsilon}:=\eta^{\epsilon}\ast\Delta_{\mathcal {X}}w-\Delta_{\mathcal {X}}w^{\epsilon}+\frac{1}{2}\big(Fw^{\epsilon}-\eta^{\epsilon}\ast(Fw)\big),
\end{equation}
satisfying $f^{\epsilon}\in C^{\infty}(U_T)$, and $\|f^{\epsilon}\|_{L^\infty(U_T)}\leq C$.
It follows from \eqref{eq1.60} that
\begin{align*}
  &\quad\eta^{\epsilon}\ast\Delta_{\mathcal {X}}w-\Delta_{\mathcal {X}}w^{\epsilon}\\
  &=\int_{\mathbb{R}^2}\eta^{\epsilon}(x-y)\big(\partial^2_{y_1}+h^2(y_1)\partial^2_{y_2}\big)w(y)dy-\Delta_{\mathcal {X}}\int_{\mathbb{R}^2}\eta^{\epsilon}(y)w(x-y)dy\\
  &=\int_{\mathbb{R}^2}\eta^{\epsilon}(y)\big(\partial^2_{x_1}+h^2(x_1-y_1)\partial^2_{x_2}\big)w(x-y)dy-\int_{\mathbb{R}^2}\eta^{\epsilon}(y)(\partial^2_{x_1}+h^2(x_1)\partial^2_{x_2})w(x-y)dy\\
  &=\int_{\mathbb{R}^2}\eta^{\epsilon}(y)\big(h^2(x_1-y_1)-h^2(x_1)\big)\partial^2_{x_2}w(y)dy.
\end{align*}
Since $y$ is close enough to $x$, and $h$ is continuous, for any $\varepsilon>0$, we have
$$|h^2(x_1-y_1)-h^2(x_1)|=|h(x_1-y_1)+h(x_1)||h(x_1-y_1)-h(x_1)|< C\varepsilon.$$
Hence we have
\begin{equation}\label{3.37}
  |\eta^{\epsilon}\ast\Delta_{\mathcal {X}}w-\Delta_{\mathcal {X}}w^{\epsilon}|<\varepsilon.
\end{equation}
Similarly, we get
\begin{equation}\label{3.38}
 |Fw^{\epsilon}-\eta^{\epsilon}\ast(Fw)|<\varepsilon.
\end{equation}
Since $w^{\epsilon}\in C^{2,\alpha}_{\mathcal {X}}(U_T)$, by the prior estimates Lemma \ref{1.1}, we have
\begin{align}\label{3.39}
\|w^{\epsilon}\|_{C^{2,\alpha}_{\mathcal {X}}(\bar{U}_T)}&\leq C\left(\big\|f^{\epsilon}\|_{C^{0,\alpha}_{\mathcal {X}}(\bar{U}_T)}+\|(\Delta_{\mathcal {X}}+F)w^{\epsilon}(0,x)\big\|_{C^{0,\alpha}_{\mathcal {X}}(\bar{U}_T)}+\|w^{\epsilon}\|_{L^{\infty}(U_T)}\right)\\\nonumber
&\leq C\left(\big\|f^{\epsilon}\|_{C^{0,\alpha}_{\mathcal {X}}(\bar{U}_T)}+\|w^{\epsilon}(0,x)\|_{C^{2,\alpha}_{\mathcal {X}}(\bar{U}_T)}+\|w^{\epsilon}\|_{L^{\infty}(U_T)}\right).
\end{align}

Now we claim that
\begin{equation}\label{8.010}
  \|w^{\epsilon}\|_{L^{\infty}(U_T)}\leq C\big(\|f^{\epsilon}\|_{L^{\infty}(U_T)}+\|w^{\epsilon}(0,x)\|_{L^{\infty}(U_T)}\big).
\end{equation}
Without loss of generality, assume that $F\geq2$. Otherwise, we can make transformations $e^{\frac{t}{2}\|F\|_{L^\infty(U_T)}+2t}w^{\epsilon}$ as the proof of Lemma \ref{WMP4}, and we omit it here. To use Lemma \ref{WMP4}, let $\tilde{w}^{\epsilon}:=w^{\epsilon}-\|f^{\epsilon}\|_{L^{\infty}(U_T)}$, then it satisfies
\begin{equation}\label{8.09}
\left\{
  \begin{array}{ll}
    \partial_t \tilde{w}^{\epsilon}-\Delta_{\mathcal {X}}\tilde{w}^{\epsilon}+\frac{1}{2}F\tilde{w}^{\epsilon}=f^{\epsilon}-\frac{1}{2}F\|f^{\epsilon}\|_{L^{\infty}(U_T)}\leq0,\ \ (t,x)\in U_{T},  \\
    \tilde{w}^{\epsilon}(0,x)=\eta^{\epsilon}\ast w(0,x)-\|f^{\epsilon}\|_{L^{\infty}(U_T)},\qquad\qquad\qquad\ \ x\in\mathbb{R}^2.
  \end{array}
\right.
\end{equation}
Applying Lemma \ref{WMP4} to $\tilde{w}^{\epsilon}$, we have
\begin{align}\label{8.11}
  \|\tilde{w}^{\epsilon}\|_{L^{\infty}(U_T)} & \leq C\left(\|f^{\epsilon}\|_{L^{\infty}(U_T)}+\|w^{\epsilon}(0,x)\|_{L^{\infty}(U_T)}\right),
\end{align}
and the claim \eqref{8.010} is valid.

Therefore, we get a uniform estimates from \eqref{8.08}-\eqref{8.010} that
\begin{align*}
  \|w^{\epsilon}\|_{C^{2,\alpha}_{\mathcal {X}}(\bar{U}_T)} & \leq C\left(\|w^{\epsilon}(0,x)\|_{C^{2,\alpha}_{\mathcal {X}}(\bar{U}_T)}+\big\|f^{\epsilon}\|_{C^{0,\alpha}_{\mathcal {X}}(\bar{U}_T)}\right).
\end{align*}
Hence $\{w^{\epsilon}\}$ is a Cauchy sequence in $C^{2,\alpha}_{\mathcal {X}}(\bar{U}_T)$, and converges locally uniformly to $w$ as $\epsilon\rightarrow0$. Thus we conclude that $w\in C^{2,\alpha}_{\mathcal {X}}(\bar{U}_T)$ and holds
\begin{equation}\label{21.6}
   \|w\|_{C^{2,\alpha}_{\mathcal {X}}(\bar{U}_T)} \leq C\|G\|_{C^{2,\alpha}_{\mathcal {X}}(\bar{U}_T)}\leq C,
\end{equation}
because of the assumption {\bf{(H4)}} for $G$.

\noindent{\bf{Step\ 3.}} To prove \eqref{21.2} for $u$, it is necessary to prove $w$ is a positive solution.
Let $$w^\delta=\delta -w e^{t\frac{\|F\|_{L^{\infty}(U_{T})}}{2}},\ \text{and}\  \delta=e^{-\frac{1}{2}\max_{x\in\mathbb{R}^2}|G|}>0.$$
By calculation, we show that $w^\delta$ satisfies
 $$ \partial_t w^\delta-\Delta_{\mathcal {X}}w^\delta+c(t,x)w^\delta=\delta c(t,x)\leq0,$$
 where $c(t,x)=\frac{1}{2}\left(F-\|F\|_{L^{\infty}(U_{T})}\right)\leq0$.
 Using Lemma \ref{WMP4} for $w^\delta$, we get
 $$\max_{(t,x)\in U_{T}} w^\delta(t,x)\leq\max_{x\in\mathbb{R}^2} (w^\delta(0,x))^{+}=\max_{x\in\mathbb{R}^2}(\delta-e^{-\frac{G}{2}})^+=0.$$
Then $0<\delta\leq w e^{t\frac{\|F\|_{L^{\infty}(U_{T})}}{2}}$.
 Since $F$ is bounded, then $w>0$ for any $t\in(0,T)$.

 Furthermore, consider that $u=-2\ln w$, and
$$  D_{\mathcal {X}}u=-2w^{-1}D_{\mathcal {X}}w,\quad D_{\mathcal {X}}^2u=2w^{-1}D_{\mathcal {X}}^2w+2w^{-2}D_{\mathcal {X}}w,$$
combined with inequalities \eqref{21.6}, we have
\begin{align*}
  \|u\|_{C^{2,\alpha}_{\mathcal {X}}(\bar{U}_T)} &\leq C\|w\|_{C^{2,\alpha}_{\mathcal {X}}(\bar{U}_T)}\leq C.
\end{align*}

\noindent{\bf{Step\ 4.}}
The solution is unique by the compare principle given in Proposition \ref{CP}. Indeed, suppose that $w_1$ and $w_2$ are solutions of the equation \eqref{2.0},  let $\tilde{w}=w_1-w_2$, then $\tilde{w}$ is also a solution of \eqref{2.0} with $\tilde{w}(0,x)=0$. Then we have $\tilde{w}\equiv0$, it implies that $u$ is unique, the result follows.
\end{proof}

\section{EXISTENCE AND UNIQUENESS OF THE FPE}
In this section, we prove the existence and uniqueness of the FPE \eqref{eq1.4} given in Theorem \ref{1.3}. To begin with this process, we obtain some estimates for the following auxiliary equation
 \begin{equation}\label{10.222}
\left\{
 \begin{array}{ll}
  \partial_t m-(\Delta_{\mathcal {X}}+\varepsilon\Delta) m=\hat{f},\qquad\ (t,x)\in U_{T}, \\
  m(0,x)=0,\qquad\qquad\qquad\qquad\ x\in\mathbb{R}^2,
 \end{array}
\right.
\end{equation}
where $\varepsilon>0$.


\begin{Lem}\label{L2}
Let $m^{\varepsilon}$ be the solution of \eqref{10.222}. If $\hat{f}\in L^2(U_T)$, then we have
\begin{equation}\label{10.12}
  \|\varepsilon\Delta m^{\varepsilon}\|_{L^2(U_T)}\leq C\left(\|\hat{f}\|_{L^2(U_T)}+\varepsilon\|\nabla^2_{\mathcal {X}} m^{\varepsilon}\|_{L^2(U_T)}\right).
\end{equation}
Furthermore, for any $\alpha\in(0,1)$, if $\hat{f}\in C^{0,\alpha}_{\mathcal {X}}(U_T)$, then we have
\begin{equation}\label{10.13}
    \|\varepsilon\Delta m^{\varepsilon}\|_{C_{\mathcal {X}}^{0,\alpha}(U_T)}\leq C\left(\|\hat{f}\|_{C_{\mathcal {X}}^{0,\alpha}(U_T)}+\varepsilon\| m^{\varepsilon}\|_{C_{\mathcal {X}}^{2,\alpha}(U_T)}\right).
\end{equation}
\end{Lem}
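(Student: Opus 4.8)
The plan is to reduce both inequalities to a single quantity, the second derivative in the ``forbidden'' direction. Writing the Euclidean Laplacian as $\Delta=\partial^2_{x_1}+\partial^2_{x_2}$ and recalling $X_1=\partial_{x_1}$ from \eqref{eq1.60}, one has the splitting
\[
\varepsilon\Delta m^{\varepsilon}=\varepsilon X_1^2 m^{\varepsilon}+\varepsilon\,\partial^2_{x_2}m^{\varepsilon}.
\]
The term $\varepsilon X_1^2 m^{\varepsilon}$ is harmless: $X_1^2 m^{\varepsilon}$ is an entry of the $\mathcal{X}$-Hessian, so $\|\varepsilon X_1^2 m^{\varepsilon}\|_{L^2(U_T)}\leq\varepsilon\|\nabla^2_{\mathcal{X}}m^{\varepsilon}\|_{L^2(U_T)}$ and, by Definition \ref{2.06}, $\|\varepsilon X_1^2 m^{\varepsilon}\|_{C^{0,\alpha}_{\mathcal{X}}(U_T)}\leq\varepsilon\|m^{\varepsilon}\|_{C^{2,\alpha}_{\mathcal{X}}(U_T)}$; this is precisely the origin of the terms $\varepsilon\|\nabla^2_{\mathcal{X}}m^{\varepsilon}\|$ and $\varepsilon\|m^{\varepsilon}\|_{C^{2,\alpha}_{\mathcal{X}}}$ on the right-hand sides of \eqref{10.12} and \eqref{10.13}. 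Everything therefore comes down to controlling $\varepsilon\,\partial^2_{x_2}m^{\varepsilon}$ --- which is \emph{not} dominated by $\nabla^2_{\mathcal{X}}m^{\varepsilon}$ --- by $\hat f$ alone, with constants independent of $\varepsilon$; the gain will come from the coercivity of the viscosity term.

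For \eqref{10.12} I would test \eqref{10.222} against $-\partial^2_{x_2}m^{\varepsilon}$ and integrate over $U_T$ (the formal computation being justified, for fixed $\varepsilon>0$, by the uniform parabolicity of the operator and a standard density argument). Since the vector fields are $t$-independent and $m^{\varepsilon}(0,\cdot)=0$, integration by parts in $x_2$ turns the time term into $\tfrac12\|\partial_{x_2}m^{\varepsilon}(T,\cdot)\|^2_{L^2(\R^2)}\geq0$; likewise $\iint_{U_T}\Delta_{\mathcal{X}}m^{\varepsilon}\,\partial^2_{x_2}m^{\varepsilon}=\iint_{U_T}|\partial_{x_1}\partial_{x_2}m^{\varepsilon}|^2+\iint_{U_T}h^2(x_1)|\partial^2_{x_2}m^{\varepsilon}|^2\geq0$, and the viscosity term gives $\iint_{U_T}\varepsilon\Delta m^{\varepsilon}\,\partial^2_{x_2}m^{\varepsilon}\geq\varepsilon\|\partial^2_{x_2}m^{\varepsilon}\|^2_{L^2(U_T)}$. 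On the right-hand side, $\bigl|\iint_{U_T}\hat f\,\partial^2_{x_2}m^{\varepsilon}\bigr|\leq\tfrac{\varepsilon}{2}\|\partial^2_{x_2}m^{\varepsilon}\|^2_{L^2(U_T)}+\tfrac{1}{2\varepsilon}\|\hat f\|^2_{L^2(U_T)}$, and absorbing the first term leaves $\varepsilon^2\|\partial^2_{x_2}m^{\varepsilon}\|^2_{L^2(U_T)}\leq\|\hat f\|^2_{L^2(U_T)}$, i.e. $\|\varepsilon\,\partial^2_{x_2}m^{\varepsilon}\|_{L^2(U_T)}\leq\|\hat f\|_{L^2(U_T)}$. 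Combined with the bound on $\varepsilon X_1^2 m^{\varepsilon}$ this gives \eqref{10.12}.

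For \eqref{10.13} I would keep the same splitting, so only $\varepsilon\,\partial^2_{x_2}m^{\varepsilon}$ is left, and use that \eqref{10.222} reads $(h^2(x_1)+\varepsilon)\partial^2_{x_2}m^{\varepsilon}=\partial_t m^{\varepsilon}-(1+\varepsilon)X_1^2 m^{\varepsilon}-\hat f$. By the Campanato characterization of $C^{0,\alpha}_{\mathcal{X}}$ (Lemma \ref{A.3.}) it suffices to bound, uniformly in $\varepsilon$, the Campanato seminorm of $\varepsilon\,\partial^2_{x_2}m^{\varepsilon}$, its $L^2$ part being \eqref{10.12}. On a parabolic ball $B_P((t_0,x_0),R)$ I would localize as in the proof of Lemma \ref{1.1}: testing \eqref{10.222} against $-\eta^2\partial^2_{x_2}m^{\varepsilon}$ for a cutoff $\eta$ supported in $B_P$ and repeating the above integration by parts, the viscosity coercivity again produces $\varepsilon\iint\eta^2|\partial^2_{x_2}m^{\varepsilon}|^2$, which together with the $\tfrac1\varepsilon$--$\varepsilon$ splitting of the $\hat f$-term controls $\varepsilon^2\|\partial^2_{x_2}m^{\varepsilon}\|^2_{L^2(\frac12 B_P)}$ by $\|\hat f\|^2_{L^2(B_P)}$ plus lower-order cutoff errors (reabsorbed with Corollary \ref{P}); then comparing $\partial^2_{x_2}m^{\varepsilon}$ with the solution of the frozen homogeneous problem on $B_P$ --- whose interior decay is supplied by the hypoellipticity and the local homogeneity exactly as in Propositions \ref{7.3.1}--\ref{7.2.10} --- and iterating via the Iterative Lemma \ref{A.4.} yields the required Campanato decay with an $\varepsilon$-free constant, after which adding back the $\varepsilon X_1^2 m^{\varepsilon}$ term produces \eqref{10.13}. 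The hard part will be this last step \emph{near} the degeneracy set $\mathcal{N}(h)$, where $h^2(x_1)+\varepsilon\approx\varepsilon$: there the intrinsic $\mathcal{X}$-parabolic scaling is not the one adapted to $\partial_t-\Delta_{\mathcal{X}}-\varepsilon\Delta$, and since $[\partial_{x_1},\Delta_{\mathcal{X}}]=2h(x_1)h'(x_1)\partial^2_{x_2}\neq0$ one cannot differentiate the equation to get an equation for $\partial^2_{x_2}m^{\varepsilon}$. As for Lemma \ref{1.1}, this forces splitting $U_T$ into a neighborhood of $\mathcal{N}(h)$, treated through the frozen viscous model operator $\partial_t-(Y_1^2+\varsigma^2(x_1^0)Y_2^2)-\varepsilon\Delta$ and its $\varepsilon$-dependent dilations, and its complement, where the operator is uniformly parabolic and classical Schauder applies, now carrying the $\varepsilon$-dependence through every estimate; the remaining pieces (the $L^2$ bound, the $\varepsilon X_1^2 m^{\varepsilon}$ term, and the reassembly) are routine.
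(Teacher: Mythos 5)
Your proof of \eqref{10.12} is correct and is a legitimate variant of the paper's argument: the paper squares the equation and integrates (an energy identity for $\partial_t m^{\varepsilon}-(\Delta_{\mathcal {X}}+\varepsilon\Delta)m^{\varepsilon}$), whereas you test against $-\partial^2_{x_2}m^{\varepsilon}$ after splitting off $\varepsilon X_1^2m^{\varepsilon}$; both reduce to the same integrations by parts (sign of the time term via $m^{\varepsilon}(0,\cdot)=0$, positivity of $\iint\partial^2_{x_1}m^{\varepsilon}\,\partial^2_{x_2}m^{\varepsilon}=\iint|\partial_{x_1}\partial_{x_2}m^{\varepsilon}|^2$), and your version even yields the slightly cleaner bound $\|\varepsilon\partial^2_{x_2}m^{\varepsilon}\|_{L^2}\leq\|\hat f\|_{L^2}$.

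The gap is in \eqref{10.13}. You set up a genuine Schauder-type program for the perturbed operator --- cutoff energy estimates, comparison with the frozen homogeneous problem, iteration via Lemma \ref{A.4.} --- and then you yourself flag, without resolving it, the step on which that program hinges: near $\mathcal {N}(h)$ the operator $\partial_t-\Delta_{\mathcal {X}}-\varepsilon\Delta$ has no single homogeneous model, since the intrinsic $\mathcal {X}$-parabolic dilations of Remark \ref{10.16} are incompatible with the isotropic scaling forced by $\varepsilon\Delta$, and the decay estimate of Proposition \ref{7.2.8} is proved only for $\partial_t-\Delta_{\mathcal {Y}}$. As written, the comparison/iteration step therefore does not go through, and the proposal does not prove \eqref{10.13}. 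The point you miss is that no such machinery is needed, precisely because \eqref{10.13} is allowed to carry the error $\varepsilon\|m^{\varepsilon}\|_{C^{2,\alpha}_{\mathcal {X}}(U_T)}$ on the right (it is absorbed later in the proof of Theorem \ref{1.3}). The paper simply localizes the $L^2$ identity \eqref{12.3} to a parabolic cylinder $U_R$, bounds $\iint_{U_R}\hat f^2$ and $\varepsilon^2\iint_{U_R}|\nabla^2_{\mathcal {X}}m^{\varepsilon}|^2$ by $CR^{Q+2+2\alpha}\big([\,\cdot\,]^2_{C^{0,\alpha}_{\mathcal {X}}}+R^{-2\alpha}\|\cdot\|^2_{L^\infty}\big)$ exactly as in \eqref{12.4}, and then uses the crude Jensen bound $\iint_{U_R}|\varepsilon\Delta m^{\varepsilon}-\varepsilon\overline{\Delta m^{\varepsilon}}_R|^2\leq 4\iint_{U_R}(\varepsilon\Delta m^{\varepsilon})^2$ to read off the Campanato seminorm; Lemma \ref{A.3.} and the interpolation inequality of Corollary \ref{P} then give \eqref{10.13}. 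No frozen coefficients, no iteration, and no separate treatment of the degenerate set are required --- the ``hard part'' you identify disappears once you accept losing the full $\varepsilon\|m^{\varepsilon}\|_{C^{2,\alpha}_{\mathcal {X}}}$ term rather than only $\varepsilon\|X_1^2m^{\varepsilon}\|_{C^{0,\alpha}_{\mathcal {X}}}$.
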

\begin{proof}
First, we prove the estimates \eqref{10.12}. Square the equation \eqref{10.222}, and integrate on $U_T$, we get
\begin{align}\label{12.1}
\iint_{U_T} \hat{f}^2dxdt&=\iint_{U_T}\big(\partial_tm^{\varepsilon}-(\Delta_{\mathcal {X}}m^{\varepsilon}+\varepsilon\Delta m^{\varepsilon})\big)^2dxdt\\\nonumber
&=\iint_{U_T}(\partial_tm^{\varepsilon})^2-2\partial_tm^{\varepsilon}\big(\Delta_{\mathcal {X}}m^{\varepsilon}+\varepsilon\Delta m^{\varepsilon}\big)+\big(\Delta_{\mathcal {X}}m^{\varepsilon}+\varepsilon\Delta m^{\varepsilon}\big)^2dxdt.
\end{align}
Since $m^{\varepsilon}(0,x)=0$, using integration by parts, we get
\begin{equation}\label{11.1}
-\iint_{U_T}2\partial_tm^{\varepsilon}\Delta_{\mathcal {X}}m^{\varepsilon}dxdt=\iint_{U_T}\partial_t|\nabla_{\mathcal {X}}m^{\varepsilon}|^2dxdt=\int_{\mathbb{R}^2}|\nabla_{\mathcal {X}}m^{\varepsilon}|^2\bigg|_{t=T}dx\geq0.
\end{equation}
Similarly, we get
\begin{equation}\label{11.2}
-\iint_{U_T}2\partial_tm^{\varepsilon}\Delta m^{\varepsilon}dxdt
=\int_{\mathbb{R}^2}|\nabla m^{\varepsilon}|^2\bigg|_{t=T}dx\geq0.
\end{equation}
Since
\begin{align*}
 (\Delta_{\mathcal {X}}m^{\varepsilon})^2+2\varepsilon h^2\partial^2_{x_1}m^{\varepsilon}\partial^2_{x_2}m^{\varepsilon} =\big((1+\varepsilon)\partial^2_{x_1}m^{\varepsilon}+h^2\partial^2_{x_2}m^{\varepsilon}\big)^2-2\varepsilon(1+\varepsilon)(\partial^2_{x_1}m^{\varepsilon})^2,
\end{align*}
and use integration by parts, we get
\begin{align}\label{12.2}
 \iint_{U_T}(\Delta_{\mathcal {X}}m^{\varepsilon}+\varepsilon\Delta m^{\varepsilon})^2dxdt
 &=\iint_{U_T}(\varepsilon\Delta m^{\varepsilon})^2 +2\varepsilon(h\partial^2_{x_2}m^{\varepsilon})^2+2\varepsilon(\partial_{x_1}\partial_{x_2}m^{\varepsilon})^2\\\nonumber\nonumber
 &\quad+ 2\varepsilon(\partial^2_{x_1}m^{\varepsilon})^2+ 2\varepsilon h^2\partial^2_{x_1}m^{\varepsilon}\partial^2_{x_2}m^{\varepsilon}+(\Delta_{\mathcal {X}}m^{\varepsilon})^2 dxdt\\\nonumber
 &=\iint_{U_T}(\varepsilon\Delta m^{\varepsilon})^2+2\varepsilon(h\partial^2_{x_2}m^{\varepsilon})^2+2\varepsilon(\partial_{x_1}\partial_{x_2}m^{\varepsilon})^2\\\nonumber
 &\quad+\big((1+\varepsilon)\partial^2_{x_1}m^{\varepsilon}+h^2\partial^2_{x_2}m^{\varepsilon}\big)^2-\varepsilon^2(\partial^2_{x_1}m^{\varepsilon})^2dxdt.
\end{align}
It follows from \eqref{12.1}-\eqref{12.2} that
\begin{align}\label{12.3}
  \iint_{U_T}(\varepsilon\Delta m^{\varepsilon})^2dxdt&\leq\iint_{U_T} \hat{f}^2dxdt+\varepsilon^2\iint_{U_T}(\partial^2_{x_1}m^{\varepsilon})^2dxdt\\\nonumber
  & \leq\iint_{U_T} \hat{f}^2dxdt+\varepsilon^2\iint_{U_T}|\nabla^2_{\mathcal {X}}m^{\varepsilon}|^2dxdt,
\end{align}
for any $0<\varepsilon\leq1$, and \eqref{10.12} holds.

Next, we prove the estimate \eqref{10.13}. For any $x\in\mathbb{R}^2$, $R>0$, $U_R$ defined in \eqref{Uk} such that $U_R\subset U_T$. Note that $\hat{f}\in C^{0,\alpha}_{\mathcal {X}}(U_T)$, it implies that $\hat{f}\in L^2(U_R)$. Similar to \eqref{12.4}, it follows from \eqref{12.3} that
\begin{align}\label{12.8}
  \iint_{U_R}(\varepsilon\Delta m^{\varepsilon})^2dxdt&\leq  CR^{Q+2+2\alpha}[\hat{f}]^2_{C^{0,\alpha}_{\mathcal {X}} (U_{R})}+CR^{Q+2}\|\hat{f}\|^2_{L^{\infty}(U_{R})}\\\nonumber
   & \quad+ C\varepsilon^2R^{Q+2+2\alpha}[\nabla^2_{\mathcal {X}}m^{\varepsilon}]^2_{C^{0,\alpha}_{\mathcal {X}} (U_{R})}+C\varepsilon^2R^{Q+2}\|\nabla^2_{\mathcal {X}}m^{\varepsilon}\|^2_{L^{\infty}(U_{R})}.
\end{align}
Since
\begin{align}\label{12.9}
(\overline{{\Delta m^{\varepsilon}}}_R)^2 &=\frac{1}{|U_R|^2}\left(\iint_{U_R}\Delta m^{\varepsilon}dxdt\right)^2 \leq\frac{1}{R^{Q+2}}\iint_{U_R}(\Delta m^{\varepsilon})^2dxdt,
\end{align}
it follows from \eqref{12.8} and \eqref{12.9} that
\begin{align*}
  \iint_{U_R}\big(\varepsilon\Delta m^{\varepsilon} -\varepsilon\overline{{\Delta m^{\varepsilon}}}_R\big)^2dxdt\ &\leq2\iint_{U_R}(\varepsilon\Delta m^{\varepsilon})^2dxdt +2\iint_{U_R}\big(\varepsilon\overline{{\Delta m^{\varepsilon}}}_R\big)^2dxdt  \\
   & \leq  CR^{Q+2+2\alpha}\bigg([\hat{f}]^2_{C^{0,\alpha}_{\mathcal {X}} (U_{R})}+\frac{1}{R^{2\alpha}}\|\hat{f}\|^2_{L^{\infty}(U_{R})}\\
   &\quad+\varepsilon^2[\nabla^2_{\mathcal {X}}m^{\varepsilon}]^2_{C^{0,\alpha}_{\mathcal {X}} (U_{R})}+\frac{\varepsilon^2}{R^{2\alpha}}\|\nabla^2_{\mathcal {X}}m^{\varepsilon}\|^2_{L^{\infty}(U_{R})}\bigg).
\end{align*}
Thus by Lemma \ref{A.3.}, we have
\begin{align*}
  &\quad[\Delta m^{\varepsilon}]^2_{C^{0,\alpha}_{\mathcal {X}} (U_{R})}\\
   &\leq C\left( [\hat{f}]^2_{C^{0,\alpha}_{\mathcal {X}} (U_{R})}+\frac{1}{R^{2\alpha}}\|\hat{f}\|^2_{L^{\infty}(U_{R})}+\varepsilon^2[\nabla^2_{\mathcal {X}}m^{\varepsilon}]^2_{C^{0,\alpha}_{\mathcal {X}} (U_{R})}+\frac{\varepsilon^2}{R^{2\alpha}}\|\nabla^2_{\mathcal {X}}m^{\varepsilon}\|^2_{L^{\infty}(U_{R})}\right).
\end{align*}
Fixed $R>0$, by the interpolation inequality given in Corollary \ref{P}, then we have
 \begin{align*}
   \|\varepsilon\Delta m^{\varepsilon}\|_{C_{\mathcal {X}}^{0,\alpha}(U_R)} & \leq C\left(\|\hat{f}\|_{C_{\mathcal {X}}^{0,\alpha}(U_R)}+\varepsilon\|\nabla^2_{\mathcal {X}} m^{\varepsilon}\|_{C_{\mathcal {X}}^{0,\alpha}(U_R)}\right) \\
   & \leq C\left(\|\hat{f}\|_{C_{\mathcal {X}}^{0,\alpha}(U_T)}+\varepsilon\|m^{\varepsilon}\|_{C_{\mathcal {X}}^{2,\alpha}(U_T)}\right),
 \end{align*}
 for all $U_R$,  and \eqref{10.13} follows.
\end{proof}

Let $u$ be the solution of HJE \eqref{eq1.3}, for the operator $\mathcal {H}$ defined in \eqref{001}, with
\begin{equation}\label{8}
A=\begin{pmatrix}1&0\\0&1\\ \end{pmatrix},\ b(t,x)=\nabla_{\mathcal {X}}u(t,x),\  c(t,x)=\Delta_{\mathcal {X}}u(t,x).
\end{equation}
By Theorem \ref{1.2}, we have
\begin{equation}\label{08}
b(t,x)\in C^{1,\alpha}_{\mathcal {X}}(U_{T}),\ \text{and}\ c(t,x)\in C^{0,\alpha}_{\mathcal {X}}(U_{T}).
\end{equation}
Given the optimal control $\alpha^*$ defined in \eqref{11.20}, then FPE \eqref{eq1.4} can be regarded as a linear degenerate equation under the operator $\mathcal {H}$. Now we prove Theorem \ref{1.3}.
\begin{proof}
[\bf{Proof of Theorem \ref{1.3}}]
In this proof, by Cantor's diagonal argument, we apply Arzela-Ascoli Theorem to obtain the regularity of $m$.

\noindent{\bf{Step\ 1.}} We show the existence of the distribution solution by vanishing viscosity method. For all $\varepsilon>0$, we consider the equation
 \begin{equation}\label{10.22}
\left\{
 \begin{array}{ll}
  \partial_t m-(\Delta_{\mathcal {X}}+\varepsilon\Delta) m-b(t,x)\nabla_{\mathcal {X}}m- c(t,x)m=\tilde{f},\quad (t,x)\in U_{T}, \\
  m(0,x)=0,\qquad\qquad\qquad\qquad\qquad\qquad\qquad\qquad\quad\ \ x\in\mathbb{R}^2,
 \end{array}
\right.
\end{equation}
where $\tilde{f}=(\Delta_{\mathcal {X}}+\varepsilon\Delta) m_0+b(t,x)\nabla_{\mathcal {X}}m_0+c(t,x)m_0$, $b(t,x)$ and $c(t,x)$  are given in \eqref{8}.
Since the assumption {\bf{(H6)}} and \eqref{08}, it is clearly that $\tilde{f}\in L^2(U_{T})$, and the assumption {\bf{(H1)}} implies that the smooth function $h\in C^{0,\alpha}(\bar{U}_{T})$. By the classical theory of parabolic equations, there exists a unique classical solution $m^\varepsilon$ of \eqref{10.22}, and $m^\varepsilon\in C^{2,\frac{\alpha}{\kappa+1}}(\bar{U}_{T})$. For any test function $\varphi\in C^\infty_0(U_T)$, we have
$$\int^T_0\int_{\mathbb{R}^2}m^\varepsilon\left(-\partial_t\varphi-(\Delta_{\mathcal {X}}+\varepsilon\Delta)\varphi+{\rm{div}}_{\mathcal {X}}\big(b(t,x)\varphi\big)-c(t,x)\varphi\right) dxd\tau=0.$$
Moreover,
\begin{equation}\label{eq13.5}
  \|m^{\varepsilon}\|_{L^\infty(U_{T})}\leq C\|\tilde{f}\|_{L^\infty(U_{T})}.
\end{equation}
This implies that up to a subsequence still denoted by $m^\varepsilon$, such that $m^\varepsilon$ converges weakly to $m$ in weak*-topology $L_{loc}^\infty(U_{T})$, when $\varepsilon\rightarrow0$. Therefore $m$ is a distribution solution of equation \eqref{eq1.4}.

\noindent{\bf{Step\ 2.}} We obtain the uniform estimates for $m^\varepsilon$.
Since $m^\varepsilon\in C^{2,\frac{\alpha}{\kappa+1}}(U_{T})$, by the definition \eqref{8.05} and \eqref{2.7}, we have $m^\varepsilon\in C^{2,\frac{\alpha}{\kappa+1}}_{\mathcal {X}}(U_{T})$.
 By Lemma \ref{1.1}, we get
\begin{equation}\label{3.05}
 \|m^\varepsilon\|_{C^{2,\frac{\alpha}{\kappa+1}}_{\mathcal {X}}(U_T)}\leq C\left( \|\tilde{f}+\varepsilon\Delta m^\varepsilon\|_{C_{\mathcal {X}}^{0,\frac{\alpha}{\kappa+1}}(U_T)}+\|m^\varepsilon\|_{L^{\infty}(U_T)}\right).
\end{equation}
It follows from Lemma \ref{L2} that
 \begin{align}\label{10.015}
   &\quad\|\varepsilon\Delta m^\varepsilon\|_{C_{\mathcal {X}}^{0,\frac{\alpha}{\kappa+1}}(U_T)}\\\nonumber
     &\leq C\|\tilde{f}+b(t,x)\nabla_{\mathcal {X}}m^\varepsilon+ c(t,x)m^\varepsilon\|_{C_{\mathcal {X}}^{0,\frac{\alpha}{\kappa+1}}(U_T)}+\varepsilon \|m^\varepsilon\|_{C_{\mathcal {X}}^{2,\frac{\alpha}{\kappa+1}}(U_T)}.
 \end{align}
It follows from the classical WMP for $m^\varepsilon$ that
\begin{equation}\label{10.014}
\|m^\varepsilon\|_{L^{\infty}(U_{T})}\leq C\|\tilde{f}\|_{L^{\infty}(U_{T})}.
\end{equation}
It follows from \eqref{3.05} and \eqref{10.015} that
\begin{equation}\label{12.31}
 \|m^\varepsilon\|_{C^{2,\frac{\alpha}{\kappa+1}}_{\mathcal {X}}(U_T)}\leq C\|\tilde{f}+b(t,x)\nabla_{\mathcal {X}}m^\varepsilon+ c(t,x)m^\varepsilon\|_{C_{\mathcal {X}}^{0,\frac{\alpha}{\kappa+1}}(U_T)}+C\varepsilon \| m^\varepsilon\|_{C_{\mathcal {X}}^{2,\frac{\alpha}{\kappa+1}}(U_T)}.
\end{equation}

By the interpolation inequality, and then taking $\varepsilon$ small enough, we have
\begin{align}\label{12.7}
   \|m^\varepsilon\|_{C^{2,\frac{\alpha}{\kappa+1}}_{\mathcal {X}}(U_T)}&\leq C\big(\|\tilde{f}\|_{C^{0,\frac{\alpha}{\kappa+1}}(U_T)}+\varepsilon \| m^\varepsilon\|_{C_{\mathcal {X}}^{2,\frac{\alpha}{\kappa+1}}(U_T)}+\|m^\varepsilon\|_{L^{\infty}(U_{T})}\big)\\\nonumber
   &\leq C\|\tilde{f}\|_{C^{0,\frac{\alpha}{\kappa+1}}(U_T)},
\end{align}
where $C$ is a positive constant independent on $\varepsilon$.

\noindent{\bf{Step\ 3.}} We show that $m\in C^{2,\frac{\alpha}{\kappa+1}}_{\mathcal {X}}(\bar{U}_T)$ by Cantor's diagonal argument.
Consider that there exists a sequence $\{x_k\}\in\mathbb{R}^2$, such that $$\mathbb{R}^2=\bigcup_{k\geq1}B_R(x_k).$$
 Fixed $R>0$, for any  $(t_k,x_k)\in U_T$, $U^k_R:=(t_k-R^2,t_k+R^2)\times B_R(x_k)$ such that $U^k_R\subset U_T$.
  It follows from \eqref{12.31} that
  $$   \|m^\varepsilon\|_{C^{2,\frac{\alpha}{\kappa+1}}_{\mathcal {X}}(\bar{U}^k_R)}\leq \|m^\varepsilon\|_{C^{2,\frac{\alpha}{\kappa+1}}_{\mathcal {X}}(\bar{U}_T)}\leq C\|\tilde{f}\|_{C^{0,\frac{\alpha}{\kappa+1}}(\bar{U}_T)}.$$
 Using the Arzela-Ascoli Theorem on $\bar{U}^1_{R}$, we first get a subsequence $\{m^{\varepsilon_{n_1}}\}$ converges uniformly to $m$ as $n_1\rightarrow\infty$. Moreover, $ m\in C^{2,\frac{\alpha}{\kappa+1}}_{\mathcal {X}}(\bar{U}^1_{R})$. Next, we take a subsequence of $\{m^{\varepsilon_{n_1}}\}$ denoted as $\{m^{\varepsilon_{n_2}}\}$, such that it converges uniformly to $m$ on $\bigcup^{2}_{k=1}\bar{U}^k_{R}$ as $n_2\rightarrow\infty$. And so on, use a standard diagonal argument to select a sequence $\{m^{\varepsilon_{n_k}}\}$,
 which converges uniformly to $m$ on $U_T$ as $n_k\rightarrow\infty$. We thus conclude that $m\in C^{2,\frac{\alpha}{\kappa+1}}_{\mathcal {X}}(\bar{U}_T)$, the result follows.
\end{proof}

\section{EXISTENCE AND UNIQUENESS OF THE MFG}
In this section, we prove the existence and uniqueness of the MFG systems \eqref{eq1.1}. The uniqueness holds under classical hypothesis on the monotonicity of $F$ and $G$ given in assumption {\bf(H5)}, which can refer to Theorem 4.3 in \cite{Cardaliaguet}. Now we only prove the existence of the MFG systems \eqref{eq1.1}, and the proof is based on the Schauder fixed point theorem given in Lemma \ref{APP1.33} .
\begin{proof}[\bf{Proof of Theorem \ref{1.4}}]
For any probability measures $\mu\in\mathcal {C}$, we associate $m=\psi(\mu)\in\mathcal {C}$ in the following way. Let $u$ is a unique solution to the terminal problem
\begin{equation}\label{3.3}
\left\{
 \begin{array}{ll}
 -\partial_t u-\Delta_{\mathcal {X}} u+\frac{1}{2}| \nabla_{\mathcal {X}} u|^2=F(x,\mu),\quad\ (t,x)\in U_{T},\\
  u(T,x)=G(x,\mu_T),\qquad\qquad\qquad\qquad\ x\in\mathbb{R}^2.
 \end{array}
\right.
\end{equation}
Then we define $m:=\psi(\mu)$ as the solution of the FPE
\begin{equation}\label{3.4}
\left\{
 \begin{array}{ll}
    \partial_t m-\Delta_\mathcal {X} m-\nabla_\mathcal {X}u \nabla_\mathcal {X}m-\Delta_\mathcal {X}um=0,\quad\ (t,x)\in U_{T},\\
    m(0,x)=m_0,\ \ \qquad\qquad\qquad\qquad\qquad\qquad\ x\in\mathbb{R}^2.
 \end{array}
\right.
\end{equation}

 First, let us check that $\psi$ is a well-defined from $\mathcal {C}$ to itself. 
 Under assumptions {\bf{(H1)}}-{\bf(H4)},
 Theorem \ref{1.2} shows that the HJE \eqref{3.3} has a unique solution $u$ belongs to $C^{2,\alpha}_{\mathcal {X}}(U_{T})$. Moreover, we have an estimate \eqref{21.2}.
Now, we turn to the FPE \eqref{3.4} with {\bf(H6)}. Since $u\in C^{2,\alpha}_{\mathcal {X}}(U_{T})$, the maps $(t,x)\to \nabla_{\mathcal {X}}u(t,x)$ and $(t,x)\to \Delta_{\mathcal {X}}u(t,x)$ belong to $C^{0,\alpha}_{\mathcal {X}}(U_{T})$. Hence by Theorem \ref{1.3}, there is a unique solution $m$ of the equation \eqref{3.4}, and $m$ belongs to $C^{2,\frac{\alpha}{\kappa+1}}_{\mathcal {X}}(U_T)$. Moreover, from Lemma \ref{APP1.4}-\rm{(i)}, for any $s, t\in[0,T]$, we have the following estimates
$$d_1(m(t,\cdot),m(s,\cdot)) \leq C\|h\|_{L^{\infty}(\mathbb{R})}\big(\|\nabla_{\mathcal {X}}u\|_{L^{\infty}(U_T)}+1\big)|t-s|^{\frac{1}{2}},$$
where $h$ and $\|\nabla_\mathcal {X} u\|_{L^\infty(U_{T})}$ are bounded, because of the uniform estimate \eqref{21.2}.
 Thus by the definition \eqref{eq2.7}, then $m$ belongs to $\mathcal {C}$ , and the mapping $\psi:\mu\rightarrow m:=\psi(\mu)$ is well-defined from $\mathcal {C}$ into itself.

Second, let us check that $\psi$ is a continuous map. Let $\mu_n\in \mathcal {C}$ converge to some $\mu$. Let $(u_n,m_n)$ and $(u,m)$ be the corresponding solutions to \eqref{3.3}-\eqref{3.4}. By the continuity assumption {\bf{(H3)}} on $F$ and $G$, we get $(t,x)\rightarrow F(x,\mu_n(t))$, $x\rightarrow G(x,\mu_n(T))$ locally uniformly converge to $(t,x)\rightarrow F(x,\mu(t))$, $x\rightarrow G(x,\mu(T))$.
Then one gets the local uniformly converge to $u$ by standard arguments. Since $\{\nabla_{\mathcal {X}}u_n\}_{n}$ are uniformly bounded and the interior regularity result given in Lemma \ref{APP1.4}-\rm{(ii)}, we know that $\{\nabla_{\mathcal {X}}u_n\}_n$ are locally uniformly H\"{o}lder continuous and therefore locally uniformly converges to $\nabla_{\mathcal {X}}u$. This easily implies that any converging subsequence of the relatively compact sequence $m_n$ is a weak solution of \eqref{3.4}. But $m$ is the unique weak solution of the equation \eqref{3.4}, which proves that $\{m_n\}$ converges to $m$.

Because $\mathcal {C}$ is compact, the continuous map $\psi$ is compact. We conclude by Schauder fixed point theorem that the compact map $\mu\rightarrow m=\psi(\mu)$ has a fixed point in $\mathcal {C}$. This fixed point $m$ and its corresponding $u$ is a coupling solution of the MFG systems \eqref{eq1.1}. Then the result follows.
\end{proof}


\addcontentsline{toc}{section}{References} 
\end{document}